\title[Commuting vector fields on three manifolds]{Existence of common zeros for commuting vector fields on three manifolds}
\author{\firstname{Christian} \lastname{Bonatti}}
\address{Institut de Math\'ematiques de Bourgogne,\\
UMR 5584 du CNRS, Universit\'e de Bourgogne 
9, Avenue Alain Savary\\
21000 (Dijon)}
\email{bonatti@u-bougogne.fr}
\author{\firstname{Bruno} \lastname{Santiago}}
\address{Institut de Math\'ematiques de Bourgogne,\\
UMR 5584 du CNRS, Universit\'e de Bourgogne 
9, Avenue Alain Savary\\
21000 (Dijon)}
\email{bruno.rodrigues-santiago@u-bourgogne.fr}
\thanks{We thank Sebastien Alvarez for kind discussions helping to clean the arguments and Johan Taflin
for his kind interest in our work. 
We also thank  Martin Vogel for many useful conversations.
This work  was supported by the Projeto Ci\^encia Sem Fronteiras {\it Din\^amicas n\~ao hiperbolicas: 
aspectos topol\'ogicos e erg\'odicos} (CAPES (Brazil)) and by FAPERJ(Brazil).
We thank the kind hospitality of Institut de Math\'ematiques de Bourgogne, UMR 5584 du CNRS, Dijon, France 
and Departamento de Matem\'atica, PUC-Rio de Janeiro, Brazil. The referee of the first version of this paper made a very careful reading and provided many comments improving the text; we thank 
him/her.}
\keywords{commuting vector fields, fixed points, Poincar\'e-Hopf index}
\subjclass{37C25, 37C85, 57S05, 58C30.}
\begin{document}

\newtheorem{maintheorem}{Theorem}

\renewcommand{\themaintheorem}{\Alph{maintheorem}}
\newtheorem{clai}{Claim}

\newcommand{\margem}[1]{\marginpar{{\scriptsize {#1}}}}
\newcommand{\notation}{\textbf{Notation.  }}
\newcommand{\field}[1]{\mathbb{#1}}
\newcommand{\real}{\field{R}}
\newcommand{\complex}{\field{C}}
\newcommand{\integer}{\field{Z}}
\renewcommand{\natural}{\field{N}}
\newcommand{\rational}{\field{Q}}
\newcommand{\Tor}{\field{T}}
\newcommand{\torus}{\field{T}}
\newcommand{\Circ}{\field{S}}
\newcommand{\Proj}{\field{P}}

\newcommand{\al} {\alpha}       \newcommand{\Al}{\Alpha}
\newcommand{\be} {\beta}        \newcommand{\Be}{\Beta}
\newcommand{\ga} {\gamma}    \newcommand{\Ga}{\Gamma}
\newcommand{\de} {\delta}       \newcommand{\De}{\Delta}
\newcommand{\ep} {\eps}
\newcommand{\eps}{\varepsilon}
\newcommand{\ze} {\zeta}
\newcommand{\te} {\theta}       \newcommand{\Te}{\Theta}
\newcommand{\vte}{\vartheta}
\newcommand{\iot}{\iota}
\newcommand{\ka} {\kappa}
\newcommand{\la} {\lambda}      \newcommand{\La}{\Lambda}
\newcommand{\vpi}{\varpi}
\newcommand{\vro}{\varrho}
\newcommand{\si} {\sigma}       \newcommand{\Si}{\Sigma}
\newcommand{\vsi}{\varsigma}
\newcommand{\ups}{\upsilon}     \newcommand{\Up}{\Upsilon}
\newcommand{\vfi}{\phi}
\newcommand{\om} {\omega}       \newcommand{\Om}{\Omega}

\newcommand{\Z}{\mathbb{Z}}
\newcommand{\N}{\mathbb{N}}
\newcommand{\R}{\mathbb{R}}
\newcommand{\supp}{\operatorname{supp}}
\newcommand{\diam}{\operatorname{Diam}}
\newcommand{\dist}{\operatorname{dist}}
\newcommand{\const}{\operatorname{const}}
\newcommand{\topp}{\operatorname{top}}
\newcommand{\Leb}{\operatorname{Leb}}
\newcommand{\var}{\operatorname{var}}
\newcommand{\ov}{\overline}
\newcommand{\interior}{\operatorname{int}}
\newcommand{\lip}{\operatorname{Lip}_1(X;\R)}
\newcommand{\zero}{\operatorname{Zero}}
\newcommand{\per}{\operatorname{Per}}
\newcommand{\col}{\operatorname{Col}}
\newcommand{\dom}{\operatorname{dom}}
\newcommand{\sing}{\operatorname{Sing}}
\newcommand{\ind}{\operatorname{Ind}}

\def\AA{{\mathbb A}} 
\def\BB{{\mathbb B}} 
\def\CC{{\mathbb C}} 
\def\DD{{\mathbb D}}
\def\EE{{\mathbb E}} 
\def\FF{{\mathbb F}} 
\def\GG{{\mathbb G}} 
\def\HH{{\mathbb H}}
\def\II{{\mathbb I}} 
\def\JJ{{\mathbb J}} 
\def\KK{{\mathbb K}} 
\def\LL{{\mathbb L}}
\def\MM{{\mathbb M}} 
\def\NN{{\mathbb N}} 
\def\OO{{\mathbb O}} 
\def\PP{{\mathbb P}}
\def\QQ{{\mathbb Q}} 
\def\RR{{\mathbb R}} 
\def\SS{{\mathbb S}} 
\def\TT{{\mathbb T}}
\def\UU{{\mathbb U}} 
\def\VV{{\mathbb V}} 
\def\WW{{\mathbb W}} 
\def\XX{{\mathbb X}}
\def\YY{{\mathbb Y}} 
\def\ZZ{{\mathbb Z}}

\newcommand{\ti}{\tilde }
\newcommand{\Ptop}{P_{\topp}}
\newcommand{\htop}{h_{\topp}}
\newcommand{\un}{\underbar}
\newcommand{\cI}{{\mathcal I}}
\newcommand{\mL}{{\mathcal L}}
\newcommand{\mM}{{\mathcal M}}
\newcommand{\cH}{{\mathcal H}}
\newcommand{\cC}{\mathcal{C}}
\newcommand{\cK}{\mathcal{K}}
\newcommand{\cR}{\mathcal{R}}
\newcommand{\cP}{\mathcal{P}}
\newcommand{\cL}{\mathcal{L}}
\newcommand{\cE}{\mathcal{E}}
\newcommand{\cM}{\mathcal{M}}
\newcommand{\cB}{\mathcal{B}}
\newcommand{\cO}{\mathcal{O}}
\newcommand{\cG}{\mathcal{G}}
\newcommand{\cN}{\mathcal{N}}
\newcommand{\cQ}{\mathcal{Q}}
\newcommand{\cF}{\mathcal{F}}
\newcommand{\cT}{\mathcal{T}}
\newcommand{\cW}{\mathcal{W}}
\newcommand{\cU}{\mathcal{U}}
\newcommand{\cS}{\mathcal{S}}
\newcommand{\cA}{\mathcal{A}}
\newcommand{\cZ}{\mathcal{Z}}
\newcommand{\cX}{\mathcal{X}}
\newcommand{\tmu}{\tilde\mu}
\newcommand{\NB}{\margem{$*$}}
\newcommand{\ce}{{\mathcal E}}
\newcommand{\p}{\field{P}}

\selectlanguage{english}

\begin{abstract}In $1964$ E. Lima proved that commuting vector fields on surfaces 
with non-zero Euler characteristic have common zeros. Such statement is empty in 
dimension $3$, since all the Euler characteristics
vanish. Nevertheless, C. Bonatti proposed in $1992$ a local version, replacing the 
Euler characteristic by
the Poincar\'e-Hopf index of a vector field $X$ in a region $U$, denoted by 
$\operatorname{Ind}(X,U)$; he asked:

\emph{Given commuting vector fields $X,Y$ and a region $U$
where  $$\operatorname{Ind}(X,U)\neq 0$$ does
$U$ contain a common zero of $X$ and $Y$?} 

A positive answer was given in the case 
where $X$ and $Y$ are real analytic, in the same article where the above question was posed.  

In this paper, we prove the existence of common zeros for commuting $C^1$ vector fields $X$, 
$Y$ on a $3$-manifold,  in any region $U$ such that $\operatorname{Ind}(X,U)\neq 0$, 
assuming that the set of collinearity 
of $X$ and $Y$ is contained in a smooth surface. 
This is a strong indication that the results for analytic vector fields should
hold in the $C^1$ setting.
\end{abstract} 

\selectlanguage{francais}

\begin{altabstract}
En 1964 E. Lima a montré que les champs de vecteurs que commute dans une surface ont un zéro commun. Cette énonce est vide en dimension 3 puisque toutes les caractéristiques d'Euler sont nulles dans ce cas-là. Cependant, C. Bonatti a proposé 1992 une version locale, en remplaçant la caractéristique d'Euler par l'indice de Poincaré-Hopf d'un champ de vecteur $X$ dans une région $U$, qu'on denote par $\operatorname{Ind}(X,U)$. Il a proposé la question suivante: 
\emph{Étant donné deux champs de vecteurs commutant $X$ et $Y$ et une région compacte $U$ sur lequel  $$\operatorname{Ind}(X,U)\neq 0,$$ est-ce que $U$ contient un zéro commun de $X$ et $Y$?}

Une réponse positive a été donné dans le cas où $X$ et $Y$ sont réelle analytique, dans le même papier où la question au-dessus a été posé.    

Dans cet article on montre existence de zéros communs pour les champs de vecteurs de classe $C^1$ que commute en diménsion 3 pour chaque région $U$ telle que l'indice $\operatorname{Ind}(X,U)$ est non nul et en supposent en plus que le lieu de colinéarité entre $X$ et $Y$ est contenu dans une surface lisse. C'est une forte indication que le résultat pour les champs de vecteurs analytiques doit être vrai en régularité $C^1$. 
\end{altabstract}

\selectlanguage{english}

\maketitle

\section{Introduction}

One of the fundamental problems in dynamical systems is whether a given system possesses fixed points or not. 
A simple scenario to pose this question
is for the $\mathbb{Z}$-action generated by a diffeomorphism or  a homeomorphism of a manifold, or for the
continuous time dynamical system generated by the flow of a vector field. 
In both cases, the theories of Poincar\'e-Hopf
and Lefschetz indices
relate the topology of the 
ambient manifold with the existence of fixed points. 

Nonetheless, if one consider two commuting diffeomorphisms or two commuting vector fields 
i.e. vector fields $X$ and $Y$ whose flows satisfy:
\footnote{This definition can be adapted for non-complete vector fields, see Section \ref{s.notacao}} 
$$X_t\circ Y_s=Y_s\circ X_t,\:\:\forall(s,t)\in\mathbb{R}^2,$$
the existence of a fixed point for the action they generate  is  a wide  open question 
in dimensions $\geq 3$. 

The first result on this question is given by  the works on surfaces of 
Lima \cite{Lima_esfera}, \cite{Lima_geral}. He proves 
that  any family of commuting vector fields on a surface with non-zero Euler characteristic 
have a common zero. 
In the late eighties, \cite{Bonatti_esfera}  proved that 
commuting diffeomorphisms of the sphere $\SS^2$ which are  $C^1$-close to the identity have a common fixed point.
Later 
\cite{Bonatti_geral} extended this result to any surface with non-zero Euler characteristic 
(see other generalizations in \cite{DFF}\cite{Firmo}).  Then,
Handel \cite{Handel}   provided a topological invariant in $\ZZ/2\ZZ$ for a pair of 
commuting diffeomorphisms of the sphere  $\SS^2$ whose vanishing guarantees a common fixed point. 
This was further generalized
by Franks, Handel and Parwani \cite{FHP} for any number of commuting diffeomorphisms on the sphere 
(see \cite{Hirsch} and \cite{FHP2} for generalizations on other surfaces). 

It is worth to note, however, that two commuting 
\emph{continuous interval maps} may fail to have a common fixed point: 
an example is constructed in \cite{Boyce} of two continuous commuting, non-injective, maps 
of the closed interval which do not have a common fixed point.

In higher dimensions much less is known: 
\begin{itemize}
 \item One knows some  relation between the 
topology of the manifold and the dimension of the orbits of  
$\R^p$-actions (see \cite{MT1,MT2}). The techniques introduced in these works make
possible a 
simple proof of Lima's result, for smooth 
vector fields \cite{Turiel};
\item \cite{Bonatti_analiticos}  proved that  two commuting real analytic vector fields on an
 analytic $4$-manifold with non-zero Euler characteristic have a common zero. 
 The same statement does not make sense in dimension
three since every $3$-manifold has zero Euler characteristic. Nevertheless, a local result remains 
true in dimension three.
\end{itemize}

Before stating the result of \cite{Bonatti_analiticos} on $3$-manifolds, we briefly recall 
the notion of the Poincar\'e-Hopf index $\ind(X,U)$  of a vector field $X$ on a compact region $U$ 
whose boundary $\partial U$ is disjoint from the set $\zero(X)$. If $U$ is a small compact  
neighborhood of 
an isolated zero $p$
of the vector field $X$, then $\ind(X,U)$ is just
the classical  Poincar\'e-Hopf index $\ind(X,p)$ of $X$ at $p$. 
For a general compact region $U$ with $\partial U\cap\zero(X)=\emptyset$, 
one considers a small 
perturbation $Y$ of $X$  with only  finitely many 
isolated zeros  in $U$.  Then, we define the index $\ind(X,U)$ as the sum of the Poincar\'e-Hopf 
indices $\ind(Y,p)$, $p\in\zero(Y)\cap U$. 
We refer the reader to Section \ref{s.notacao} for details (in particular for the fact that $\ind(X,U)$ 
does not depend on the perturbation $Y$ of $X$).

Then, the main theorem of \cite{Bonatti_analiticos} says that every pair $X,Y$ of analytic commuting vector fields have a common zero in any compact region
$U$ such that $\ind(X,U)\neq 0$. By contraposition, if $X$ and $Y$ are analytic and commute but do not have common zeros then $\ind(X,U)=0$. By reducing
the compact region $U$ so that it separates $\zero(X)\cap U$ from $\zero(Y)$ one obtains the following statement: 


\begin{theo}[Bonatti \cite{Bonatti_analiticos}]
Let $M$ be a real analytic
$3$-manifold and $X$ and $Y$ be two analytic 
commuting vector fields over $M$. Let $U$ be a compact subset  of $M$ 
such that $\zero(Y)\cap U=\zero(X)\cap {\partial U}=\emptyset$. 
Then, 
$$\ind(X,U)=0.$$
\end{theo}

This statement is also true when $M$ has dimension $2$ and the vector fields are just $C^1$ 
(see Proposition 11 in \cite{Bonatti_geral}\footnote{The result stated in 
Proposition 11 of \cite{Bonatti_geral} is for $C^{\infty}$ vector fields, but 
the proof indicated there can be adapted for $C^1$ vector fields using cross-sections, 
in a similar way we do here in Section~\ref{s.reduzindo.a.prova}}). 
This motivates the following

\begin{conj}
\label{conjectura.local}
 Let $X$ and $Y$ be two $C^1$ commuting 
vector fields on a $3$-manifold $M$. Let $U$ be a  compact subset of $M$ such that 
$\zero(Y)\cap U=\zero(X)\cap {\partial U}=\emptyset$. 

Then, $\ind(X,U)=0$.
\end{conj}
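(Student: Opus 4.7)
The plan is to combine a perturbation of $X$ along $Y$ with a local flow-box cancellation, reducing $\ind(X,U)$ to the degree of a map that factors through a cross-section of $Y$.

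\textbf{Concentrating zeros on the collinearity locus.} First I would replace $X$ by $X_\eps:=X+\eps Y$ for a small $\eps>0$. Since $X$ is nowhere zero on the compact set $\partial U$ and $Y$ is bounded, the homotopy $X+t\eps Y$, $t\in[0,1]$, stays nonzero on $\partial U$, and homotopy invariance of the Poincar\'e--Hopf index gives $\ind(X_\eps,U)=\ind(X,U)$. The zero set $\zero(X_\eps)\cap U$ is now contained in the collinearity locus $\col(X,Y):=\{p:X(p)\wedge Y(p)=0\}$, which is closed and locally $Y$-invariant by $[X,Y]=0$.

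\textbf{Tube cancellation.} Second, I would show that around every compact piece $K$ of $\zero(X_\eps)\cap U$ there is a $Y$-saturated compact tube $V_K\subset U$ with $\partial V_K$ disjoint from $\zero(X_\eps)$ and with $\ind(X_\eps,V_K)=0$. The model case is $V_K\simeq D\times\SS^1$ with $Y=\partial_\theta$ and $K\subset\{0\}\times\SS^1$: commutativity $[X_\eps,Y]=0$ forces $X_\eps$ to be $\theta$-independent, so on $\partial V_K=\partial D\times\SS^1$ the map $X_\eps/|X_\eps|$ to $\SS^2$ factors through the circle $\partial D$ and therefore has degree $0$. This factorization-through-a-cross-section argument gives vanishing in any setting where the $Y$-action on $V_K$ admits a Hausdorff local quotient, and additivity of the Poincar\'e--Hopf index across disjoint tubes $V_K$ covering $\zero(X_\eps)\cap U$ then produces $\ind(X,U)=0$.

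\textbf{Main obstacle.} The construction of the tubes $V_K$ is the hard part: the $Y$-orbits through $\zero(X_\eps)\cap U$ can be recurrent, $\zero(X_\eps)\cap U$ itself can be a very complicated $Y$-invariant set because $\col(X,Y)$ is only closed in general, and orbits can exit $U$ through $\partial U$. The standing hypothesis of this paper, that $\col(X,Y)$ is contained in a smooth surface $S$, is precisely what tames the situation: $Y$-invariance of $\col(X,Y)$ allows one to arrange $S$ to be $Y$-invariant as well, so $Y|_S$ is a non-vanishing $C^1$ vector field on the surface $S$, and ordinary flow-box theory on $S$, thickened transversally in $M$, supplies the tubes $V_K$; in fact on $S$ one could alternatively invoke directly the $C^1$ surface analog of Proposition~11 of \cite{Bonatti_geral}. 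Handling arbitrary $C^1$ collinearity loci, with no a priori geometric structure, is where I expect the essential remaining difficulty of the full conjecture to lie.
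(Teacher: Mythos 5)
You have not proved the statement, and indeed no proof of it exists in the paper: it is stated there as Conjecture~\ref{conjectura.local}, an open problem, and the paper only establishes the special case Theorem~\ref{teoprincipal}, under the hypothesis that $\col(X,Y,U)$ lies in a closed $C^1$ surface. The genuine gap in your argument is the ``tube cancellation'' step. Your model computation presupposes that around a compact piece $K$ of $\zero(X_\eps)\cap U$ one can find a $Y$-saturated solid torus $V_K\simeq \DD^2\times\SS^1$ in which $Y=\partial_\theta$. Such a normal form forces every $Y$-orbit in $V_K$ to be periodic of the same period, i.e.\ $Y$ to generate a free circle action near $K$ whose first return map on a cross-section is the identity. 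Nothing in the hypotheses provides this: near a periodic orbit of $Y$ contained in $\col(X,Y)$ the neighboring orbits need not be periodic, and the first return map $\cP$ is in general a nontrivial diffeomorphism only tangent to the identity along the collinearity locus; the situations where the return dynamics is trivial or normally hyperbolic are exactly the ones that are easy (compare Lemma~\ref{l.casoph}). In addition, $\zero(X_\eps)\cap U$ need not decompose into isolated compact pieces admitting pairwise disjoint $Y$-saturated isolating neighborhoods contained in $U$ (the saturation of a neighborhood can reach $\partial U$ even though the zero set itself cannot), so even the additivity step is not automatic. In short, ``commutation plus a $Y$-saturated tube implies degree zero on the tube boundary'' is not a quotable lemma; it is essentially the conjecture itself, localized near $K$.

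Your closing claim that the surface hypothesis reduces matters to ordinary flow-box theory on $S$ thickened transversally, or alternatively to a direct application on $S$ of the $C^1$ analogue of Proposition~11 of \cite{Bonatti_geral}, underestimates what is actually needed. The index to be killed is the three-dimensional index $\ind(X,U)$, and $X$ is tangent to $S$ only along $\col(X,Y)$, so no two-dimensional index statement on $S$ applies directly. Under the surface hypothesis the paper must first reduce to a prepared counter example in a solid torus where $\col(X,Y,U)$ is an annulus foliated by periodic orbits of $Y$, with nonvanishing derivatives of the period $\tau$ and of the ratio $\mu$; it then expresses $|\ind(X,U)|$ as the difference $|\ell^+-\ell^-|$ of linking numbers of the normal component $N$ of $X$ (Proposition~\ref{p.link}), uses the return-time identity $-D\tau(x)N(x)=\mu(\cP(x))-\mu(x)$ to show that $D\cP$ is the identity at every point of $\col(X,Y,U)\cap\Si_0$ (Proposition~\ref{p.derivada}), builds stable sets for $\cP$, extracts a $\cP$-invariant $N$-orbit, and finally contradicts the angular variation of $N$ along segments $[x,\cP^2(x)]$ (Corollary~\ref{c.gira}). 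None of this is a flow-box argument; the product tube with $Y=\partial_\theta$ that your sketch requires would exist only in the degenerate situation where the return dynamics is trivial, and that is not implied by $\col(X,Y,U)$ being contained in a surface.
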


This conjecture was stated as a problem in \cite{Bonatti_analiticos}. 

The goal of the present paper 
is to solve,  in the $C^1$-setting,  what was the main difficulty in the analytic case in \cite{Bonatti_analiticos}.
We explain now what was this difficulty in \cite{Bonatti_analiticos}.
A crucial role is played by the set of points of $U$ in which $X$ and $Y$ are collinear: 
$$\col(X,Y,U):=\{p\in U;\dim\left(\langle X(p),Y(p)\rangle\right)\leq 1\}.$$ 

In \cite{Bonatti_analiticos} the assumption that the commuting 
vector fields are analytic is used to say that $\col(X,Y,U)$ is either equals 
to $U$ or is an analytic
set of dimension at most 2. The case where $\col(X,Y,U)=U$ admits a direct proof.  In the other case, 
 a simple argument allows to assume that $\col(X,Y,U)$ is a surface. 
The main difficulty in \cite{Bonatti_analiticos} consists in proving that, if $\col(X,Y,U)$
is a smooth surface and $X$ and $Y$ are analytic, then the index of $X$ vanishes on $U$.

Our result is the following

\begin{maintheorem}
\label{teoprincipal}
Let $M$ be a $3$-manifold and $X$ and $Y$ be two $C^1$ commuting 
vector fields over $M$. Let $U$ be a compact subset of $M$ such that 
$\zero(Y)\cap U=\zero(X)\cap {\partial U}=\emptyset$. Assume that $\col(X,Y,U)$
is contained in a $C^1$-surface which is a compact and boundaryless submanifold of $M$. Then, 

$$\ind(X,U)=0.$$ 
\end{maintheorem}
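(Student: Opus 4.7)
The plan is to combine flow-box coordinates for the nonvanishing vector field $Y$ with a homotopy argument along $X_s = X + sY$, reducing the computation of $\ind(X,U)$ to the two-dimensional commuting vector field theorem (Proposition~11 of \cite{Bonatti_geral}, valid in the $C^1$ setting). Since $Y$ is nowhere zero on $U$, around each point of $U$ one has flow-box coordinates $(u,v,t)$ with $Y=\partial_t$, and the commutation $[X,Y]=0$ forces $X = a(u,v)\partial_u + b(u,v)\partial_v + c(u,v)\partial_t$ with $a,b,c$ independent of $t$; hence the collinearity locus $\{a=b=0\}$ is cylindrical along $Y$-orbits. Because the $Y$-flow preserves $X$, each zero $p$ of $X$ in $U$ lies on a short $Y$-orbit arc of zeros, which must be contained in $\col\subset\Sigma$; therefore $Y(p)\in T_p\Sigma$. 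More generally, $\col\cap U$ is $Y$-invariant, $Y$ is tangent to $\Sigma$ along $\col\cap U$, and the continuous function $\lambda$ on $\col\cap U$ defined by $X=\lambda Y$ is $Y$-invariant and nonzero on the compact set $\col\cap\partial U$, since $\zero(X)\cap\partial U=\emptyset$.

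Next, consider the family $X_s = X + sY$. A point $p\in U$ satisfies $X_s(p)=0$ if and only if $p\in\col$ and $\lambda(p)=-s$, so $\zero(X_s)\cap U\subset\Sigma$. For $|s|>\sup_{\col\cap U}|\lambda|$, $X_s$ has no zeros in $U$, so $\ind(X_s,U)=0$. More generally, the degree $\deg(X_s/|X_s|:\partial U\to\SS^2) = \ind(X_s,U)$ is locally constant in $s$ on the complement of the compact set $\Lambda := -\lambda(\col\cap\partial U)\subset\R\setminus\{0\}$, and it vanishes on the two unbounded components of $\R\setminus\Lambda$. To conclude that $\ind(X,U) = \ind(X_0,U)=0$, one must show that this degree does not jump as $s$ crosses each value in $\Lambda$.

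To control the jumps, one localizes near each point of $\col\cap\partial U$ and reduces to the $2$-dimensional commuting vector field theorem. When $\Sigma$ is $Y$-invariant, the restrictions $X|_\Sigma$ and $Y|_\Sigma$ are commuting $C^1$ vector fields on $\Sigma$ (a disjoint union of tori and Klein bottles, since $Y|_\Sigma$ is nonvanishing), and one relates the $3$-dimensional index of $X$ to a $2$-dimensional index on $\Sigma$ via a normal framing. In the general case, one builds $C^1$ cross-sections $S$ to $Y$ along which $X$ projects to a vector field $\tilde X$ whose collinearity locus lies in the $1$-submanifold $\Sigma\cap S$, and expresses each degree jump as a sum of $2$-dimensional Poincar\'e-Hopf indices that vanish by Proposition~11 of \cite{Bonatti_geral}. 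The main obstacle is the coherent construction of these cross-sections at points where $Y$ is tangent to $\Sigma$; here the $C^1$-regularity of $\Sigma$ is essential, replacing the role of analytic stratification in \cite{Bonatti_analiticos}.
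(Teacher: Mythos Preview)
Your proposal contains a genuine gap at the crucial step. The initial observations are correct: $\col(X,Y,U)$ is $Y$-invariant, $\zero(X_s)\cap U\subset\col(X,Y,U)$ with ratio $\lambda=-s$, and $\ind(X_s,U)=0$ for $|s|$ large. The problem is your claim that each jump of $s\mapsto\ind(X_s,U)$ across a value of $\Lambda$ ``can be expressed as a sum of $2$-dimensional Poincar\'e--Hopf indices that vanish by Proposition~11 of \cite{Bonatti_geral}.'' This is asserted, not argued, and it is precisely where the whole difficulty lies.

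Concretely: Proposition~11 of \cite{Bonatti_geral} needs \emph{two} commuting vector fields on a surface. On a cross-section $S$ to $Y$, the projection $\tilde X$ (the normal component $N$) is invariant under the holonomy of $Y$, but it does not commute with any second vector field on $S$; there is nothing to which the $2$-dimensional result applies. Your alternative, restricting $X$ and $Y$ to $\Sigma$, also fails: even if $\Sigma$ were $Y$-invariant, $X$ has no reason to be tangent to $\Sigma$ away from $\col(X,Y,U)$, and $\col(X,Y,U)$ is only assumed to be \emph{contained in} $\Sigma$. Finally, the jump in a $3$-dimensional degree across $\partial U$ is not in any direct way a $2$-dimensional Poincar\'e--Hopf index; turning this into something computable is exactly the content of the paper's Proposition~\ref{p.link}, and even that only works after substantial preliminary reductions.

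The paper's proof takes an entirely different route: a proof by contradiction that first reduces (Lemmas~\ref{l.annulus} and~\ref{l.prepared}) to a \emph{prepared counter example} in which $U$ is a solid torus, $\col(X,Y,U)$ is an annulus foliated by periodic $Y$-orbits, and there is a first return map $\cP$ on a disc $\Sigma_0$. The index is then computed via linking numbers $\ell^\pm$ of the normal component $N$ (Proposition~\ref{p.link}), and the contradiction comes from a delicate dynamical analysis: one shows $D\cP=\mathrm{Id}$ along $\col(X,Y,U)\cap\Sigma_0$ (Proposition~\ref{p.derivada}), builds stable sets and $\cP$-invariant $N$-orbits (Lemmas~\ref{l.stableset} and~\ref{l.invariant}), and then shows that the angular variation of $N$ along $[x,\cP^2(x)]$ is simultaneously large (Corollary~\ref{c.gira}) and small (Lemma~\ref{l.naogiradois}). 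There is no reduction to the surface theorem; the obstacle you identify at the end (coherent cross-sections where $Y$ is tangent to $\Sigma$) is not the main one --- the main one is that no such reduction exists.
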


The hypothesis ``\emph{ $\col(X,Y,U)$
is contained in a $C^1$-surface}'' consists in considering the simplest geometric configuration of $\col(X,Y,U)$
for which the conjecture is not trivial: if $(X,Y)$ is a counter example to the conjecture, then $\col(X,Y)$ cannot
be ``smaller'' than a surface.  More precisely, if $\ind(X,U)\neq 0$,
and if $Y$ commutes with $X$ then
the sets $\zero(X-tY)$ for small $t$ 
are not empty 
compact subsets of $\col(X,Y,U)$, invariant by the flow of $Y$ and therefore consist in orbits of $Y$. 
If $X$ and $Y$ are assumed without 
common zeros, every set $\zero(X-tY)$ consists on regular orbits of $Y$, thus is a $1$-dimensional lamination. 
Furthermore, these laminations are pairwise disjoint and vary semi-continously with $t$. In particular, $\col(X,Y,U)$ cannot
be contained in a $1$-dimensional submanifold of $M$.

Another (too) simple configuration would be the case where 
$X$ and $Y$ are everywhere collinear. This case has been treated in \cite{Bonatti_analiticos} and the same proof
holds at least in the $C^2$ setting. 

We believe that the techniques that we introduce here will be usefull to prove the conjecture, at least for 
$C^2$ vector fields.
\vskip 2mm

The proof of Theorem~\ref{teoprincipal} is by contradiction. 
The intuitive idea which guides the argument is that, 
at one hand,  the vector field $X$ 
needs to turn in all directions
in a non-trivial way in order to have a non-zero index. On the other hand, $X$ commutes with $Y$  
and therefore is 
invariant under the tangent 
flow of a non-zero vector field. 
The combination of this two properties  will lead to a contradiction.

This paper is organized as follows.
\begin{itemize}
\item In Section~\ref{s.resumo} we give an informal presentation of the proof, describing the main geometrical ideas.
\item In Section \ref{s.notacao} we give detailed definitions and state some classical facts that we shall use. 
\item In Section
\ref{s.reduzindo.a.prova} we  reduce the proof of Theorem \ref{teoprincipal}
to the proof of a slightly more technical version of it (see Lemma~\ref{l.prepared}), 
for which $U$ is a solid torus and $\col(X,Y,U)$
is a annulus foliated by periodic orbits of $Y$, and cutting $U$ in two connected components $U^+$ and $U^-$. 
\item In Section~\ref{Coordinates} we consider the projection $N$,  of 
the vector field
$X$ parallel to $Y$ on the normal bundle of $Y$. We show that $\ind(X,U)$ is 
related with the angular variations $\ell^+$ and $\ell^-$  of $N$ along generators of the fundamental 
group of each connected components $U^+$ and $U^-$  of 
$U\setminus \col(X,Y,U)$.  More precisely we will show in Proposition~\ref{p.link} that 
$$|\ind(X,U)|=|\ell^+-\ell^-|.$$

Assuming that at least one of $\ell^+$ and $\ell^-$ does not vanish, 
and the fact that $\col(X,Y,U)$ is a $C^1$-surface, we deduce in Proposition~\ref{p.derivada} that the 
 the first return map $\cP$ of $Y$ on a transversal $\Si_0$ is $C^1$-close to identity in a small neighborhood of $\col(X,Y,U)\cap \Si_0$.
\item In Section~\ref{s.proof}, still assuming that at least one of $\ell^+$ and $\ell^-$ does not vanish, 
we give a description of the dynamics of the 
first return map $\cP$.  If for instance $\ell^+\neq 0$ then every point in $\Si_0\cap U^+$ belongs to the stable set of a fixed 
point of $\cP$ (Lemma~\ref{l.stableset}). We will then use the invariance of these stable sets under the orbits of the normal vector field $N$ for 
getting a contradiction. 

\end{itemize}

We end this introduction by a general comment.  The accumulation of results proving 
the existence of common fixed points for commuting dynamical systems seems to indicate  
the possibility of a general phenomenon. However, our approach in 
Poincar\'e-Bendixson spirit has a difficulty which increases
drastically with the ambient dimension. We hope that this results will motivate other
attempts to study this phenomenon. 

\section{Idea of the proof}\label{s.resumo}

The proof is by contradiction.  We assume that there exists a \emph{counter example to the Theorem}, that is a pair of 
commuting $C^1$-vector fields $X,Y$ on a $3$-manifold $M$, and  a compact set $U$ so that 
\begin{itemize}
 \item the colinearity locus of $X$ and $Y$ in $U$ is contained in a closed $C^1$ surface $S$ in $M$. 
 \item $X$ is non-vanishing on the boundary $\partial U$ and the index  $\ind(X,U)$ is non-zero,  
 \item $X$ and $Y$ have no comon zero in $U$. 
\end{itemize}

\paragraph{Simplifying the counter examples}
A first step of the proof (see the whole Section~\ref{s.reduzindo.a.prova} and more specifically Lemmas~\ref{l.annulus} and \ref{l.prepared})  
consists in showing that, up to shrink the compact set $U$, and up to replace
the vector fields $X$ and $Y$ by (constant) linear combinations of $X$ and $Y$, one may assume futher, 
without loss of generality, that

\begin{itemize} 
\item the manifold $M$ is orientable;
\item $U$ is a solid torus $\DD^2\times (\RR/\ZZ)$;
 \item the surface $S$ is an annulus invariant by the flow of $X$ and $Y$ whose intersection with the boundary $\partial U$ is
 precisely its own boundary $\partial S$; 
 \item $Y$ is non-vanishing on $U$ and transverse to every factor $\Si_t=\DD^2\times\{t\}$, $t\in\RR/\ZZ$;
 \item for every $x\in S$ its $Y$ orbit is a periodic orbit contained in $S$, of period $\tau(x)$.
 \item the map $x\in S\mapsto  \tau(x)>0$ is of class $C^1$, constant on the $Y$-orbits, and its derivative is non vanishing on $S$; 
 \item as $X$ is colinear on $S$ to the non-vanishing vector field $Y$ one may write $X(x)=\mu(x)Y(x)$ for $x\in S$; 
 then the map $\mu$ is of class $C^1$, constant on the $Y$-orbits, 
 and  its derivative is non vanishing on $S$. 
\end{itemize}
We endow the solid torus $U$ with a basis $\cB(x)= (e_1(x), e_2(x), e_3(x))$ of $T_xM$ dependind continuously with $x$ and so that  $e_3(x)=Y(x)$,
the plane $<e_1(x),e_2(x)>$ is tangent to the disc $\Si_t$ through $x$, and for $x\in S$ the vector $e_1(x)$ is tangent to $S$. 

\begin{figure}[h]
\centering
\includegraphics[width=260pt,height=150pt]{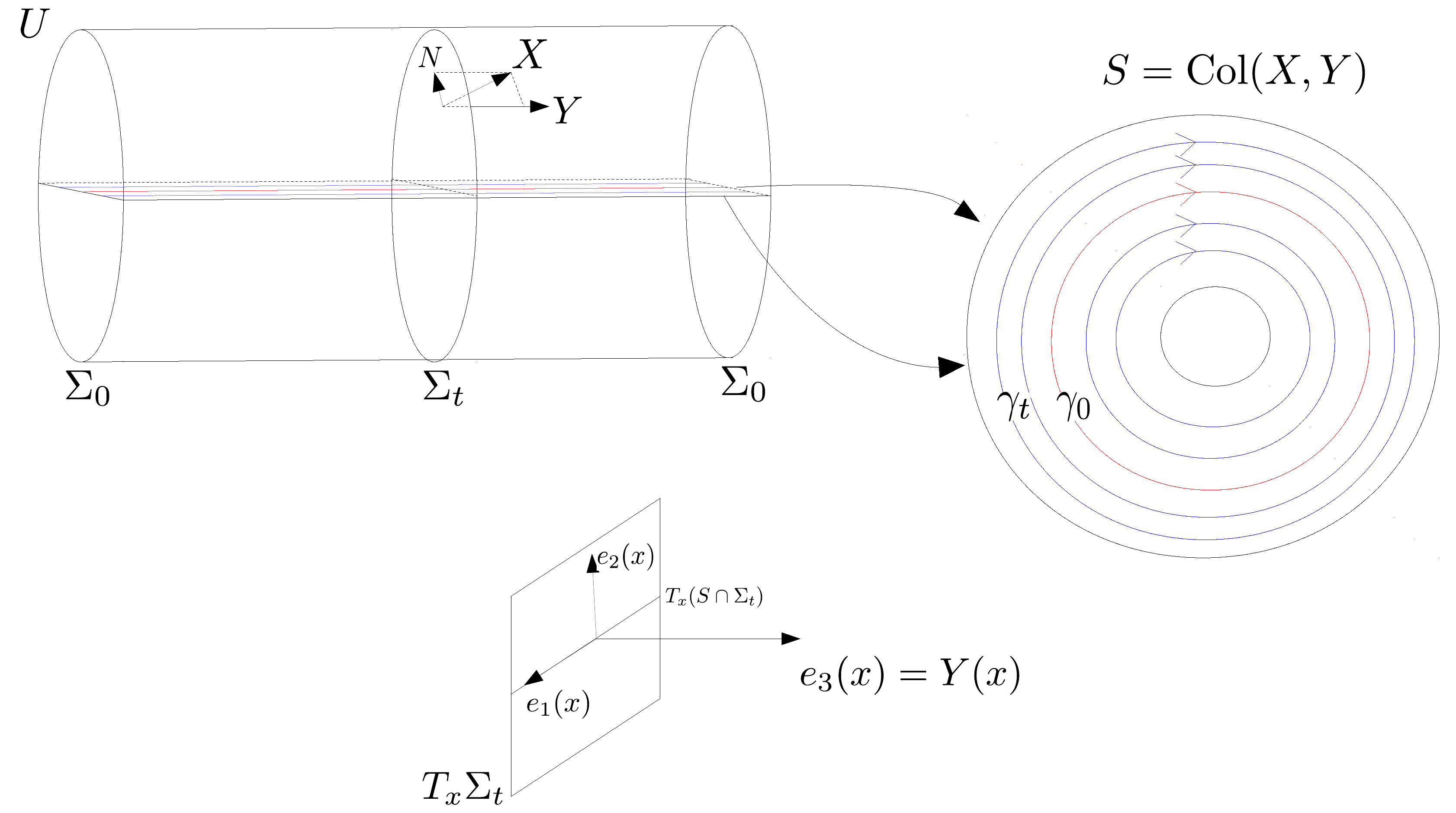}
\caption{Geometric configuration of a \emph{prepared counter example} and  the basis $\cB(x)=(e_1(x),e_2(x), e_3(x))$ at a point $x\in S\cap\Sigma_t$.}
\label{f.preparados}
\end{figure}

The pair of vector fields $(X,Y)$ with all these extra properties and endowed with the basis $\cB$ is called a \emph{prepared counter example}. More formally 
Lemmas~\ref{l.annulus} and \ref{l.prepared} announce that the existence of a counter example to Theorem~\ref{teoprincipal} implies the existence of a prepared counter example. The rest
of the proof consists in getting a contradiction from the existence of a prepared counter example. 

\paragraph{Calculating the index $\ind(X,U)$}
As $Y$ is transverse to the discs $\Si_t$ one can write the vector field $X$ as 
$$ X(x)= N(x) +\mu(x) Y(x)$$
where $N(x)$ is a vector tangent to the discs $\Si_t$, so that one may write $N(x)=\alpha(x) e_1(x) +\beta(x) e_2(x)$. This choice of coordinates
allows us to consider the vector $N(x)$ as a vector on the plane $\RR^2$. Notice that $N(x)$ vanishes if and only if $x\in S$.

Thus, given any closed curve $\gamma\subset U\setminus S$ which is freely homotopic to $\{(0,0)\}\times (\RR/\ZZ)$ in $U$, one defines the 
\emph{linking number of $N$ along $\gamma$} as being the number of turns  given by $N(x)$ 
(considered as a non-vanishing vector on $\RR^2$) as $x$ runs along $\gamma$. One easily checks that this 
linking number only depends on the connected component of $U\setminus S$ which contains $\gamma$. Since $U\setminus S$ 
consist in two connected components 
$U^+$ and $U^-$, there are only two linking numbers which we denote by $\ell^+$ and $\ell^-$ and we prove (Proposition~\ref{p.link}) 
$$|\ind(X,U)|=|\ell^+-\ell^-|.$$

\paragraph{The normally hyperbolic case}
By assumption the annulus $S$ is foliated by periodic orbits of $Y$ so that the derivative associated to these periodic orbits is $1$ in the direction of $S$.

Note that, if one of the periodic orbits of $Y$ in $S$ is partially hyperbolic, that is, has an eigenvalue different from $1$, then it admits a
neighborhood foliated by the local stable manifolds of the nearby periodic orbits. As the vector field $X$ commutes with $Y$ and preserves each periodic orbit in $S$, 
it preserves each leaf of this foliation.  Thus the normal vector $N$ is tangent to this foliation and one deduces that the linking numbers $\ell^+$ and $\ell^-$ both vanish. 
Thus $\ind(X,U)$ also vanishes leading to a contradiction (see Lemma~\ref{l.casoph} which formalizes this argument). 

\begin{figure}[h]
\centering
\includegraphics[width=300pt,height=100pt]{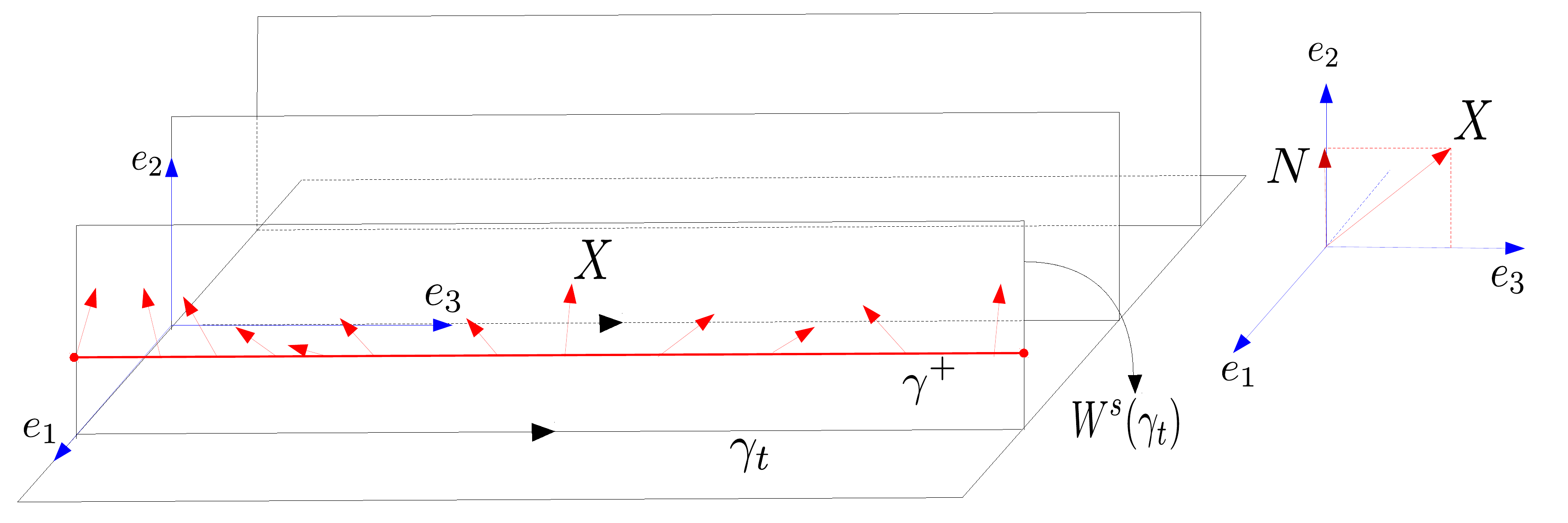}
\caption{As $X$ is tangent to the stable manifolds, it cannot turn in all directions. Its normal component
$N$ is everywhere collinear with $e_2$, so that both linking numbers vanish.}
\label{f.casoph}
\end{figure}

\paragraph{The first return map $\cP$, the derivative of the return time, and the angular variation of $N$}
The vector field $N$ does not commute with $Y$ but it is invariant under the holonomies of $Y$ of the cross sections $\Si_t$. One deduces that $N$ almost cannot turn along the 
orbits of $Y$.  This motivate us to calculate the linking number $\ell^\pm$ along particular closed curves obtained in the following way: we follow the $Y$-orbit of a point $x\in\Si_0\setminus S$
until its first return $\cP(x)$ then we close the curve by joining
a small geodesic segment in $\Si_0$. This allows us to prove that the angular variation of $N$ is larger than $2\pi$ along the segment in $\Si_0$ joining
$x$ to $\cP^2(x)$ (second return of $x$ in $\Si_0$)  (Corollary~\ref{c.gira}).

\begin{figure}[h]
\centering
\includegraphics[width=210pt,height=100pt]{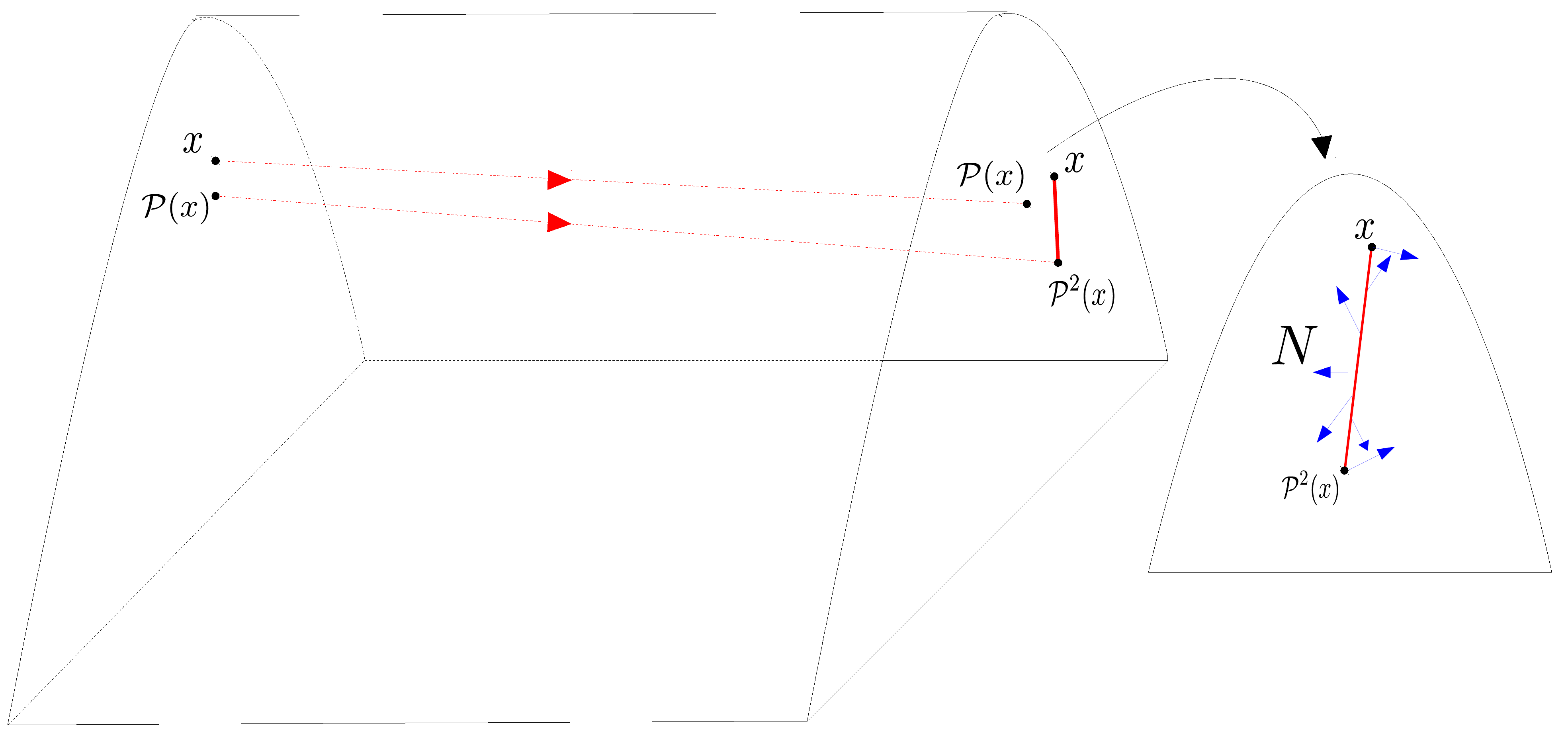}
\caption{$N$ has large angular variation along the segments $[x,\cP^2(x)]$.}
\label{f.ngira}
\end{figure}
 
Now the contradiction we are looking for will be found in a subttle analysis of the dynamics of the first return map $\cP$ together 
 with the dynamics of the vector $N$, which commute together. In particular Corollary~\ref{derivative.return.time} links  the direction of the vector field $N(x)$ (through the derivative in the direction of $N$ of the return time of
 $Y$ on $\Si_0$) with the variation $\mu(\cP(x))-\mu(x)$.  The big angular variation of $N$ along the small segments $[x,\cP^2(x)]$ lead us to prove that
 \begin{itemize}
  \item the derivative of $\cP$ is the identity at any point of $S$ (Proposition~\ref{p.derivada})
  \item $\cP$ almost preseves the levels of the map $\mu$ (see Lemmas~\ref{l.angle} and \ref{l.angle2}): one deduces that (up to exchange $\cP$ by $\cP^{-1}$) every orbit of $\cP$ converges to a point of $S$. 
 \end{itemize}
\paragraph{Invariant stable sets and the contradiction}
The second property above allows us to obtain stable sets for the points in $S\cap \Si_0$ (with respect to the first return map $\cP$) and to show that these stable sets are (as in the normally hyperbolic case) 
invariant under the vecter field $N$.  Unluckely the concluison is not so straightforward as in the normally hyperbolic case. We first prove that there is an $N$-orbit which is invariant uner $\cP$ (Lemma~\ref{l.invariant})
and we prove that the angular variation of $N$, between $x$ and $\cP^2(x)$,  along such a $\cP$ invariant $N$-orbit is arbitrarilly small when $x$ is close to $S\cap \Si_0$: that is the announced  contradiction.

\section{Notations and definitions}\label{s.notacao}
In this paper $M$ denotes a $3$-dimensional manifold. Whenever $X$ is a vector field over $M$, we  
denote $\zero(X)=\{x\in M;X(x)=0\}$ and $\zero(X,U)=\zero(X)\cap U$, for any subset $U\subset M$. 
We shall denote its flow by $X_t$.
A compact set $\Lambda\subset M$ is  \emph{invariant under the flow of  $X$} if 
$X_t(\Lambda)=\Lambda$ for every $t\in\R$. 

If $X$ and $Y$ are vector fields on $M$ we denote by $\col(X,Y)$ the set of points $p$ for which $X(p)$ 
and $Y(p)$ are collinear: 
$$\col(X,Y)=\{p\in M, \dim(\langle X(p),Y(p))\rangle\leq 1\}.$$
If $U\subset M$ is a compact region we denote $\col(X,Y,U)=\col(X,Y)\cap U$. 

\subsection{The Poincar\'e-Hopf index.}\label{s.index}
In this section we recall the classical definition and properties of the Poincar\'e-Hopf index. 

Let $X$ be a continuous vector field of a manifold $M$ of dimension $d$ and $x\in M$ 
be an isolated zero of $X$. The \emph{ Poincar\'e-Hopf index  $\operatorname{Ind}(X, x)$} is defined as 
follows: consider  local coordinates $\varphi\colon U\to\R^d$ 
defined in a neighbohood $U$ of $x$. Up to shrink $U$ one may assume that $x$ is the unique zero of $X$ 
in $U$.   Thus for $y\in U\setminus\{x\}$,  $X(y)$ expressed in that coordinates is a non vanishing vector 
of $\R^d$, and  $\frac 1{\|X(y)\|} X(y)$ 
is a unit vector
hence belongs to the sphere $\SS^{d-1}$.  Consider a small ball $B$ centered at $x$. 
The map $y\mapsto \frac 1{\|X(y)\|} X(y)$ induces a 
continuous map from 
the boundary $S=\partial B$ to $\SS^{d-1}$. The Poincar\'e-Hopf index  $\operatorname{Ind}(X, x)$ 
is the topological degree of this map. 

\begin{rema}\label{r.indice} The Poincar\'e-Hopf index  $\operatorname{Ind}(X, x)$ of an isolated zero $x$ does not depend
on a choice of a local orientation of the manifold at $x$.  

For instance, the Poincar\'e Hopf index of 
a hyperbolic zero $x$ is 
$$\operatorname{Ind}(X, x)= (-1)^{dim E^s(x)},$$
where $E^s(x)\subset T_xM$ is the stable space of $x$.

More conceptually, a change of the local orientation of $M$ at $x$: 
\begin{itemize}
 \item composes the map $x\mapsto \frac{X(x)}{\|X(x)\|}$ with a symmetry of the sphere $S^{d-1}$
 \item changes the orientation of the sphere $\partial B$, where $B$ is a small ball around $x$.
\end{itemize}
Therefore the topological degre of the induced map from $\partial B$ to $S^{d-1}$ is kept unchanged. 
\end{rema}

Assume now that $U\subset M$ is a compact subset and that $X$ does not vanish on the boundary $\partial U$. 
The   \emph{ Poincar\'e-Hopf index  $\operatorname{Ind}(X, U)$} is defined as follows: 
consider a small perturbation $Y$ of $X$ so 
that the set 
of zeros of $Y$ in $U$ is finite. A classical result asserts that  the sum of the indices of 
the zeros of $Y$ in $U$ does not depend on the perturbation $Y$ of $X$; 
this sum is the
Poincar\'e-Hopf index  $\operatorname{Ind}(X, U)$. Here, \emph{small perturbation} means that $Y$ is
homotopic to $X$ through vector fields 
which do not vanish on 
$\partial U$. More precisely
\begin{prop} If $\{X^t\}_{t\in[0,1]} $ is a continuous family of vector fields so that 
$\zero(X^t)\cap \partial U=\emptyset$, then $\operatorname{Ind}(X^t, U)$ does not depend on $t\in[0,1]$.  
\end{prop}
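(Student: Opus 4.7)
The plan is to show that $\ind(X^t,U)$ is locally constant in $t$, which by connectedness of $[0,1]$ suffices. The argument has two layers: first a compactness reduction to small parameter intervals along which $X^t|_{\partial U}$ varies by less than its own norm, and then a core statement that the index is invariant under homotopies of vector fields that never vanish on $\partial U$.

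For the compactness step, the joint continuity of $(t,x)\mapsto X^t(x)$, compactness of $[0,1]\times \partial U$ and the hypothesis $\zero(X^t)\cap\partial U=\emptyset$ for each $t\in[0,1]$ together yield a uniform $\delta>0$ with $\|X^t(x)\|\geq \delta$ for every $t\in[0,1]$ and $x\in\partial U$. Uniform continuity then gives a subdivision $0=t_0<t_1<\cdots<t_N=1$ with $\|X^t(x)-X^{t_i}(x)\|<\delta/2$ for $t\in[t_i,t_{i+1}]$ and $x\in\partial U$, so that the straight-line homotopy between $X^{t_i}$ and $X^{t_{i+1}}$ stays non-vanishing on $\partial U$. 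This reduces the problem to the following core statement: if $X_0$ and $X_1$ are vector fields joined by a continuous homotopy $\{Z^s\}_{s\in[0,1]}$ which is non-vanishing on $\partial U$, then $\ind(X_0,U)=\ind(X_1,U)$.

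For the core statement I would argue as follows. Up to sufficiently small $C^0$ perturbations supported in the interior of $U$, replace $X_0$ and $X_1$ by smooth vector fields with only finitely many hyperbolic (hence isolated) zeros in $U$, the perturbation being small enough that the concatenated homotopy between the perturbed fields is still non-vanishing on $\partial U$. Next, generically perturb the homotopy $Z^s$ rel endpoints so that its zero locus $\mathcal{Z}:=\{(x,s)\in U\times[0,1]:Z^s(x)=0\}$ is a compact smooth $1$-submanifold with boundary $\zero(X_0)\sqcup\zero(X_1)\subset U\times\{0,1\}$, disjoint from $\partial U\times[0,1]$. A standard bookkeeping argument along each connected component of $\mathcal{Z}$, using that arcs contribute opposite Poincar\'e--Hopf indices at their two endpoints while circles contribute nothing, then yields $\sum_{p\in\zero(X_0)}\ind(X_0,p)=\sum_{p\in\zero(X_1)}\ind(X_1,p)$.

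The main obstacle is precisely this transversality step: one must simultaneously arrange generic behaviour of the zero set in $U\times[0,1]$ \emph{and} preserve non-vanishing on $\partial U\times[0,1]$. A cleaner alternative I would consider is to bypass the cobordism entirely by identifying $\ind(X,U)$ with the degree of a Gauss-type map $\partial U\to S^{d-1}$ obtained from $X/\|X\|$ after trivializing $TM$ in a collar neighbourhood of $\partial U$; homotopy invariance of the topological degree then gives the conclusion at once, at the price of verifying (via excision around each isolated zero) that the degree-theoretic definition agrees with the local-perturbation definition used in the paper.
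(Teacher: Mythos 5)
The paper does not prove this proposition at all: it is invoked as ``a classical result'' (homotopy invariance of the Poincar\'e--Hopf index), folded into the well-definedness of $\ind(X,U)$, so there is no argument of the paper to compare yours against; your proposal supplies a correct proof along entirely standard lines. A few remarks on its precision. The initial subdivision into straight-line homotopies is harmless but not really a reduction, since your ``core statement'' is already the full proposition; its only benefit is to connect the statement to the paper's definition of $\ind$ via small perturbations. In the cobordism step, state the bookkeeping with the boundary-orientation convention: an arc of $\mathcal{Z}$ with both endpoints on the same face $U\times\{0\}$ (or $U\times\{1\}$) contributes opposite indices at its two endpoints, whereas an arc joining the two faces contributes equal indices, and the cancellation giving $\sum_{p\in\zero(X_0)}\ind(X_0,p)=\sum_{p\in\zero(X_1)}\ind(X_1,p)$ appears only when the two faces are counted with opposite signs; also, since $M$ need not be parallelizable, the transversality should be applied to the section $(x,s)\mapsto Z^s(x)$ of the pullback of $TM$ over $U\times[0,1]$ (or chartwise), not to a globally $\RR^d$-valued map, and the perturbation can be taken supported away from $\partial U\times[0,1]$ because $\|Z^s\|$ is uniformly bounded below near there. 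Finally, your ``cleaner alternative'' via the degree of $X/\|X\|$ on $\partial U$ is essentially the paper's Lemma~\ref{l.Using the tubular neighboorhod to calculate the index}, but it needs $\partial U$ to be a codimension-one submanifold together with a frame near $U$, while the proposition is stated for an arbitrary compact $U$; so in the general case it is the transversality/cobordism argument (or an excision to a smooth framed subdomain containing all the zeros) that does the work.
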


We say that a compact subset $K\subset \zero(X)$ is \emph{isolated} if there is a compact neighborhood $U$ of 
$K$ so that $K=\zero(X)\cap U$;  the neighborhood $U$ is called an \emph{isolating neighborhood of $K$}. 
The index  $\operatorname{Ind}(X, U)$ does not depend of the isolating neighborhood  $V$  of $K$. 
Thus $\operatorname{Ind}(X, U)$ is called \emph{the index of $K$} 
and denoted  $\operatorname{Ind}(X, K)$.

\subsection{Calculating the Poincar\'e Hopf index}
Next Lemma~\ref{l.Using the tubular neighboorhod to calculate the index} provides a practical method for calulating the index of 
a vector field $X$ in some region where it may have infinitely many zeros, without performing perturbations of $X$. 

Assume now that $\partial U$ is a codimension one submanifold and that $U$ is endowed 
with $d$ continous vector fields $X^1\dots X^d$ so that, at every point 
$z\in U$, $(X^1(z),\dots,X^d(z))$ is a basis 
of the tangent space $T_z M$. 
Once again, one can express the vector field $X$ in this 
basis so that the vector $X(y)$, for  $y\in U$, can be considered as a vector of $\R^d$. 
One defines in such a way  a map $g\colon \partial U\to \SS^{d-1}$ by 
$y\mapsto g(y)=\frac 1{\|X(y)\|} X(y)$.  

As $\partial U$ has dimension $d-1$, and is oriented as the boundary of $U$,  
this map has a topological degree.  
A classical result from homology theory implies the following

\begin{lemm}
\label{l.Using the tubular neighboorhod to calculate the index} With the notations above
the topological degre  of $g$ is  $\operatorname{Ind}(X, U)$.  
\end{lemm}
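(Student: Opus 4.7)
The plan is to use the frame $(X^1,\dots,X^d)$ to trivialize $TM|_U$ and reduce the statement to the standard Euclidean computation of index via boundary degree, then use homotopy invariance and an excision argument. Let me describe the four steps.

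First, I would exploit the trivialization. The frame $(X^1(z),\dots,X^d(z))$ gives a continuous vector bundle isomorphism $TM|_U\cong U\times\R^d$, under which $X$ corresponds to a continuous map $F\colon U\to\R^d$ with $F^{-1}(0)=\zero(X)\cap U$, and the map $g$ is literally $y\mapsto F(y)/\|F(y)\|$ on $\partial U$. This trivialization also orients $U$ (so $\partial U$ carries the induced boundary orientation), which is what is needed to speak of the degree of $g$.

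Second, I would reduce to the case of finitely many isolated zeros via homotopy. Pick a small perturbation $Y$ of $X$ so that $Y$ has only finitely many zeros $p_1,\dots,p_k$ in $U$ and is homotopic to $X$ through vector fields non-vanishing on $\partial U$. By the proposition quoted in Section~\ref{s.index}, $\ind(X,U)=\ind(Y,U)$. On the other side, normalizing along the homotopy yields a homotopy $\partial U\to \SS^{d-1}$ between $g_X$ and $g_Y$, hence $\deg g_X=\deg g_Y$. So it suffices to prove the statement for $Y$.

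Third, I would perform the classical excision argument. Choose small disjoint closed balls $B_i\subset U$ around each $p_i$, contained in the interior of $U$. On the compact manifold with boundary $U':=U\setminus\bigcup_i\interior(B_i)$, the vector field $Y$ is nowhere zero, so the map $h\colon y\mapsto F(y)/\|F(y)\|$ is defined on all of $U'$ with values in $\SS^{d-1}$. Since $\dim U'=d>d-1=\dim\SS^{d-1}$, the map $h|_{\partial U'}$ bounds (it extends to $U'$), so its degree vanishes. Writing $\partial U'=\partial U\sqcup\bigsqcup_i(-\partial B_i)$ with induced orientations yields
$$\deg(g_Y|_{\partial U})=\sum_{i=1}^k \deg(h|_{\partial B_i}).$$

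Fourth, each summand on the right is by definition the local Poincar\'e-Hopf index $\ind(Y,p_i)$: indeed, reading $Y$ in the chart given by the frame around $p_i$, the normalized map on $\partial B_i$ is exactly the Gauss map used to define $\ind(Y,p_i)$, with compatible orientations (the frame orients both $T_{p_i}M$ and the ambient $\R^d$). Summing and invoking Step~2 gives $\deg g_X=\sum_i\ind(Y,p_i)=\ind(Y,U)=\ind(X,U)$.

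The only delicate point is bookkeeping of orientations, so that the induced orientation on each $\partial B_i$ (as part of $\partial U'$) agrees with the one used in the definition of the local index through the frame-induced trivialization; this is automatic because a single global frame is used throughout, so there is no ambiguity between local and global orientations. Every other ingredient (homotopy invariance of degree, the fact that a map from a manifold-with-boundary to a sphere of strictly lower dimension has boundary of degree zero, and the definition of $\ind(Y,p_i)$) is classical.
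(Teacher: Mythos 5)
Your proof is correct. The paper does not actually prove this lemma --- it simply invokes ``a classical result from homology theory'' --- and your four-step argument (frame trivialization, reduction to finitely many isolated zeros by homotopy invariance of both sides, excision of small balls together with the fact that a sphere-valued map defined on all of the complement has boundary degree zero, and identification of each boundary term with the local Poincar\'e--Hopf index) is precisely the standard proof of that classical fact; the orientation bookkeeping you flag at the end is indeed harmless, in line with Remark~\ref{r.indice}, since the single global frame fixes compatible orientations on $U$, on each $\partial B_i$, and on the target $\R^d$.
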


In particular it does not depend on the choice of the vector fields $X^1\dots X^d$.

\begin{rema}\label{r.Using the tubular neighboorhod to calculate the index} By Lemma~\ref{l.Using the tubular neighboorhod to calculate the index}, 
if there is $j\in\{1,\dots,d\}$ so that  the vector $X(x)$ is not colinear to  $X^j(x)$, for every
$x\in\partial U$, then  $\operatorname{Ind}(X, U)=0$. 
 
\end{rema}

\subsection{Topological degree of a map from $\TT^2$ to $\SS^2$}
We consider the sphere $\SS^2$ (unit sphere of $\RR^3$) endowed 
with the north and south poles denoted $N=(0,0,1)$ and $S=(0,0,-1)$ respectively.

We denote by $\SS^1\subset \SS^2$ the equator, oriented as the unit circle of $\RR^2\times\{0\}$. 
For $p=(x,y,z)\in \SS^2\setminus\{N,S\}$ we call \emph{projection of $p$ on $\SS^1$ along the meridians} 
the point $\frac1{\sqrt{x^2+y^2}}(x,y,0)$, which is  intersection 
of $\SS^1$ with the unique half meridian containing $p$.

\begin{lemm}\label{l.sphere}
 Let $\Phi\colon \SS^2\to \SS^2$ be a continuous map so 
 that $\Phi^{-1}(N)=\{N\}$ and $\Phi^{-1}(S)=\{S\}$. 
 
 Let $\varphi\colon\SS^1\to \SS^1$ be defined as follows: the point $\varphi(p)$, for $p\in\SS^1$,  is the projection of 
 $\Phi(p)\in \SS^2\setminus\{N,S\}$ on $\SS^1$ along the meridians of $\SS^2$. 
 
 Then the topopological degrees of $\Phi$ and $\varphi$ are equal. 
 
\end{lemm}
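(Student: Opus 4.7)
My plan is to homotope $\Phi$, within the class of continuous maps $\SS^2\to\SS^2$ sending $N$ to $N$ and $S$ to $S$, to the classical suspension $\Sigma\varphi$ of $\varphi$, and then to invoke homotopy invariance of the topological degree together with the well-known equality $\deg(\Sigma\varphi)=\deg(\varphi)$.

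Concretely, I parameterize $\SS^2\setminus\{N,S\}$ by cylindrical coordinates $(\theta,h)\in\SS^1\times(-1,1)$, where $\theta$ is the meridional projection appearing in the statement and $h$ is the $z$-coordinate. The pole hypothesis guarantees that $\Phi$ restricts to a self-map of the open cylinder, so one may write $\Phi(\theta,h)=(\Phi_\theta(\theta,h),\Phi_h(\theta,h))$, and by construction $\varphi(\theta)=\Phi_\theta(\theta,0)$. The desired homotopy from $\Phi$ to $\Sigma\varphi$ is the concatenation of the two one-parameter families, $s\in[0,1]$,
$$H_s(\theta,h):=(\Phi_\theta(\theta,(1-s)h),\,\Phi_h(\theta,h)),\qquad K_s(\theta,h):=(\varphi(\theta),\,(1-s)\Phi_h(\theta,h)+sh).$$
The first kills the $h$-dependence of the longitudinal component, producing at $s=1$ a map whose longitude is $\varphi(\theta)$; the second deforms the latitudinal component to the identity in $h$, reaching the suspension $\Sigma\varphi(\theta,h):=(\varphi(\theta),h)=K_1$.

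The main point to verify is that each $H_s$ and $K_s$ extends to a continuous self-map of $\SS^2$ fixing $N$ and $S$. Since the height of the image under either family is either $\Phi_h$ itself or a convex combination of $\Phi_h$ and $h$, it suffices to show that $\Phi_h(\theta,h)\to\pm 1$ as $h\to\pm 1$, \emph{uniformly} in $\theta$. This is the one place where the hypothesis $\Phi^{-1}(\{N,S\})=\{N,S\}$ enters essentially: combined with compactness of $\SS^1$ and continuity of $\Phi$ at the poles, it forces an entire polar cap $\{h>1-\delta\}$ to be mapped into any prescribed neighborhood of $N$, and symmetrically at $S$. Once this uniform convergence is granted, the concatenation of the two homotopies yields $\Phi\simeq\Sigma\varphi$ through pole-preserving maps; homotopy invariance of the degree and the suspension identity $\deg(\Sigma\varphi)=\deg(\varphi)$ then close the argument. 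I expect the continuity at the poles to be the only genuine obstacle; everything else is formal.
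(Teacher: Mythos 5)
Your argument is correct, and it is worth noting that the paper itself states this lemma without proof (treating it as a classical fact); the only in-text analogue is the proof of Corollary~\ref{c.torus}, which likewise proceeds by homotoping the given map to an explicit model map, so your suspension argument is very much in the same spirit. Your two-stage homotopy works: writing $\SS^2\setminus\{N,S\}\cong\SS^1\times(-1,1)$, the hypothesis $\Phi^{-1}(\{N,S\})=\{N,S\}$ ensures $\Phi$ maps the open cylinder to itself, so both components $\Phi_\theta,\Phi_h$ are defined there, and the point you correctly single out --- that $\Phi_h(\theta,h)\to\pm1$ uniformly in $\theta$ as $h\to\pm1$ --- is immediate from continuity of $\Phi$ at the two poles (no compactness argument beyond that is needed); since in each family the height of the image is either $\Phi_h$ or a convex combination of $\Phi_h$ with $h$, every $H_s$ and $K_s$ extends continuously to $\SS^2$ sending $N\mapsto N$, $S\mapsto S$, and the wild behaviour of the longitudinal coordinate near the poles is harmless because convergence of the height to $\pm1$ already forces convergence to the pole in $\SS^2$. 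Two small remarks: the homotopy invariance of the degree does not require the intermediate maps to fix the poles, so that extra care is optional; and the final identity $\deg(\Sigma\varphi)=\deg(\varphi)$ does depend on the orientation convention relating the equator to the sphere, which here is the standard one (the equator oriented as the unit circle of $\RR^2\times\{0\}$), consistent with the sign change recorded in Corollary~\ref{c.sphere} when the poles are exchanged. With these conventions in place, your proof is complete and gives exactly the statement of the lemma.
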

As a direct consequence one gets
\begin{coro}\label{c.sphere}
 Let $\Phi\colon \SS^2\to \SS^2$ be a continuous map so 
 that $\Phi^{-1}(N)=\{S\}$ and $\Phi^{-1}(S)=\{N\}$. 
 
 Let $\varphi\colon\SS^1\to \SS^1$ be defined as follows: the point  $\varphi(p)$, for $p\in\SS^1$, is the projection of 
 $\Phi(p)\in \SS^2\setminus\{N,S\}$ on $\SS^1$ along the meridians of $\SS^2$. 
 
 Then the topopological degrees of $\Phi$ and $\varphi$ are opposite. 
 
\end{coro}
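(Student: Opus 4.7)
The plan is to deduce the corollary from Lemma~\ref{l.sphere} by composing $\Phi$ with a reflection that swaps $N$ and $S$ while leaving the meridian projection invariant.

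Let $\sigma \colon \SS^2 \to \SS^2$ be the reflection through the equatorial plane, i.e.\ $\sigma(x,y,z) = (x,y,-z)$. It swaps $N$ and $S$, has topological degree $-1$, and moreover, for every $q = (x,y,z) \in \SS^2 \setminus\{N,S\}$, the projection of $\sigma(q)$ on $\SS^1$ along the meridians equals the projection of $q$, since both project to $\frac{1}{\sqrt{x^2+y^2}}(x,y,0)$.

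Set $\widetilde\Phi = \sigma \circ \Phi$. Then $\widetilde\Phi^{-1}(N) = \Phi^{-1}(S) = \{N\}$ and $\widetilde\Phi^{-1}(S) = \Phi^{-1}(N) = \{S\}$, so Lemma~\ref{l.sphere} applies to $\widetilde\Phi$. Let $\widetilde\varphi \colon \SS^1 \to \SS^1$ be the map obtained from $\widetilde\Phi$ by projecting on $\SS^1$ along the meridians. By the lemma, $\deg(\widetilde\Phi) = \deg(\widetilde\varphi)$.

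It remains to identify the two sides with those in the statement. On the one hand, multiplicativity of the degree gives $\deg(\widetilde\Phi) = \deg(\sigma)\cdot\deg(\Phi) = -\deg(\Phi)$. On the other hand, by the invariance of the meridian projection under $\sigma$ noted above, $\widetilde\varphi(p)$ is the projection of $\sigma(\Phi(p))$, which equals the projection of $\Phi(p)$; that is, $\widetilde\varphi = \varphi$, so $\deg(\widetilde\varphi) = \deg(\varphi)$. Combining, $-\deg(\Phi) = \deg(\varphi)$, which is the desired conclusion. There is no serious obstacle here; the only point that requires care is to choose the auxiliary involution so that both the poles are swapped \emph{and} the meridian projection is left unchanged, for which the equatorial reflection (rather than the antipodal map) is the right choice.
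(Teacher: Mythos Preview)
Your proof is correct and is exactly the natural way to make precise the paper's remark that the corollary is ``a direct consequence'' of Lemma~\ref{l.sphere}: compose with the equatorial reflection to swap the poles, apply the lemma, and use multiplicativity of degree together with the fact that this reflection preserves the meridian projection.
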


We consider now the torus $\TT^2=\RR/\ZZ\times \RR/\ZZ$.  As a direct consequence of 
Lemma~\ref{l.sphere} and Corollary~\ref{c.sphere} one gets:

\begin{coro}\label{c.torus}
Let $\Phi\colon \TT^2\to \SS^2$ be a continuous map so 
 that $\Phi^{-1}(N)=\{0\}\times \RR/\ZZ$ and $\Phi^{-1}(S)=\{\frac12\}\times \RR/\ZZ$. 
 
 Let $\varphi_+\colon\{\frac14\}\times \RR/\ZZ\to \SS^1$ 
 (resp. $\varphi_-\colon\{\frac34\}\times \RR/\ZZ\to \SS^1$) be defined as follows:
 the point $\varphi_+(p)$ (resp. $\varphi_-(p)$)
 is the projection of 
 $\Phi(p)\in \SS^2\setminus\{N,S\}$ on $\SS^1$ along the meridians of $\SS^2$. 
 
 Then $$ |\deg(\Phi)|=|\deg(\varphi_+)-\deg(\varphi_-)|$$
 where $\deg()$ denotes the topological degree, and $\{\frac14\}\times \RR/\ZZ$ and 
 $\{\frac34\}\times \RR/\ZZ$ are endowed with the positive orientation of $\RR/\ZZ$. 
 
\end{coro}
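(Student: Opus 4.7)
The plan is to decompose the torus $\TT^2$ along the two circles $C_0 = \{0\}\times\RR/\ZZ = \Phi^{-1}(N)$ and $C_{1/2} = \{\tfrac12\}\times\RR/\ZZ = \Phi^{-1}(S)$ into the two closed annuli $A_+ = [0,\tfrac12]\times\RR/\ZZ$ and $A_- = [\tfrac12,1]\times\RR/\ZZ$, each oriented by the restriction of the orientation of $\TT^2$. Since $\Phi$ is constant on each of these two circles, collapsing $C_0$ and $C_{1/2}$ to single points inside each annulus produces two oriented $2$-spheres $S_+$ and $S_-$ together with induced continuous maps $\tilde\Phi_\pm\colon S_\pm\to\SS^2$. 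The resulting quotient map $\TT^2\to S_+\cup S_-$ (with the spheres glued at the two images of the collapsed circles) is an orientation preserving homeomorphism on the complement of $C_0\cup C_{1/2}$, so a standard excision argument gives
$$\deg(\Phi)=\deg(\tilde\Phi_+)+\deg(\tilde\Phi_-).$$

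Next I would identify each $S_\pm$ with the standard oriented $\SS^2$ so as to apply Lemma~\ref{l.sphere} and Corollary~\ref{c.sphere}. For $S_+$, fix an orientation preserving homeomorphism $\sigma_+\colon S_+\to\SS^2$ that sends the image of $C_0$ to the north pole $N$ and the image of $C_{1/2}$ to the south pole $S$, and that sends the equator $\SS^1\subset\SS^2$ to the image of $\{\tfrac14\}\times\RR/\ZZ$ with its positive orientation; a concrete choice is longitude $=2\pi\theta_2$ and latitude an affine decreasing function of $\theta_1\in[0,\tfrac12]$, and a direct orientation check at the equator confirms this is orientation preserving. Lemma~\ref{l.sphere} applied to $\tilde\Phi_+\circ\sigma_+^{-1}$ then identifies $\deg(\tilde\Phi_+)$ with the degree of the meridian projection of $\Phi|_{\{1/4\}\times\RR/\ZZ}$, which is precisely $\varphi_+$; hence $\deg(\tilde\Phi_+)=\deg(\varphi_+)$. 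For $S_-$ an analogous orientation check at the equator forces any orientation preserving identification $\sigma_-\colon S_-\to\SS^2$ to send the image of $C_{1/2}$ to $N$ and the image of $C_0$ to $S$, the equator then corresponding to the image of $\{\tfrac34\}\times\RR/\ZZ$ with its positive orientation. But $\tilde\Phi_-$ sends these two points the \emph{other way around} (image of $C_{1/2}$ to $S$ and image of $C_0$ to $N$), so $\tilde\Phi_-\circ\sigma_-^{-1}$ swaps the target poles, and Corollary~\ref{c.sphere} then yields $\deg(\tilde\Phi_-)=-\deg(\varphi_-)$.

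Combining these two equalities produces $\deg(\Phi)=\deg(\varphi_+)-\deg(\varphi_-)$, and passing to absolute values gives the desired formula. The main point requiring care is the orientation bookkeeping on the two quotient spheres: once the orientation of $\TT^2$ is fixed, requiring both $\sigma_\pm$ to be orientation preserving forces $C_0$ to play the role of the north pole of the source on $S_+$ but of the south pole on $S_-$, and it is exactly this mismatch that turns an a priori sum into the difference $\deg(\varphi_+)-\deg(\varphi_-)$ promised by the statement.
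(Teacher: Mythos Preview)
Your proof is correct and takes a genuinely different route from the paper. The paper argues by homotoping $\Phi$ (through maps preserving $\Phi^{-1}(N)$ and $\Phi^{-1}(S)$) to an explicit model map $\Phi_{d^+,d^-}$ built from the two winding numbers $d^\pm=\deg(\varphi_\pm)$, and then reads off the degree of this model map directly. You instead split $\TT^2$ along $C_0$ and $C_{1/2}$, collapse each annulus to a sphere, invoke additivity of degree, and then feed each piece into Lemma~\ref{l.sphere} and Corollary~\ref{c.sphere}; the orientation bookkeeping you carry out is exactly what produces the minus sign. Your approach has the virtue of making explicit how the corollary follows from the two preceding results (which the paper's explicit-model proof does not really use), and your orientation check at the equator is the substantive step. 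The paper's approach, on the other hand, sidesteps the collapsing/excision discussion by writing down a concrete representative of the homotopy class, at the cost of leaving the final degree computation of $\Phi_{d^+,d^-}$ implicit.
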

\begin{proof}
Indeed, $\Phi$ is homotopic (by an homotopy preserving $\Phi^{-1}(N)$ and $\Phi^{-1}(S)$) to the map 
$\Phi_{d^+,d^-}\colon\RR/\ZZ\times \RR/\ZZ\to \SS^2$ defined as follows
\begin{itemize}
 \item $\Phi_{d^+,d^-}(s,t)= \left(|\sin(2\pi s)|\cdot e^{2i\pi d^+t},\cos(2\pi s)\right)$ if $s\in[0,\frac 12]$,
 \item $\Phi_{d^+,d^-}(s,t)= \left(|\sin(2\pi s)|\cdot e^{2i\pi d^-t},\cos(2\pi s)\right)$ if $s\in[0,\frac 12]$.
\end{itemize}
where $d^+$ and $d^-$ are $\deg(\varphi_+)$ and $\deg(\varphi_-)$, respectively.
\end{proof}

\subsection{Commuting vector fields: local version} 

There are two usual definitions for commuting vector fields: 
we can require that the flows of $X$ and $Y$ commute; one may also require that the 
Lie bracket $[X,Y]$ vanishes.
These two definitions coincide for $C^1$ vector fields on compact manifolds.
On non compact manifolds we just have the commutation of the flows for small times, 
as explained more precisely below.

Let $M$ be a (not necessarily compact) manifold and $X$, $Y$ be $C^1$-vector fields on $M$.  
The Cauchy-Lipschitz theorem asserts that the flow of $X$ and $Y$ are locally defined but they may not be 
complete. 
  
We say that \emph{$X$ and $Y$ commute} if for every point $x$ there is 
$t(x)>0$ so that for every $s,t\in[-t(x),t(x)]$  the compositions $X_t\circ Y_s(x)$ 
and $Y_s\circ X_t(x)$ are defined 
and coincide. Thus, the local diffeomorphism $X_t$ carries integral curves of $Y$
into integral curves of $Y$, and vice-versa. 

\begin{rema}There are (non complete) commuting vector fields, a point $x$ and $t>0$ and $s>0$  so that  both $X_t\circ Y_s(x)$ 
and $Y_s\circ X_t(x)$ are defined  but are different. Let us present an example. 

Consider $\CC^* = \RR^2\setminus \{(0,0)\}$ endowed with the two vector fields $\tilde X=\frac\partial{\partial x}$ and $\tilde Y=\frac\partial{\partial y}$. 
Note that $\tilde X$ and $\tilde Y$ commute. 
Consider the $2$-cover $\phi\colon \CC^*\to \CC^*$ defined by $z\mapsto z^2$.  Let $X$ and $Y$ the lifts for $\phi$ of $\tilde X$ and $\tilde Y$, respectively. 
Then $X$ and $Y$ commute.  However, consider the point 
$\tilde x=(-\frac 1{\sqrt{2}}, -\frac 1{\sqrt{2}})= e^{i\pi \frac54}$. Consider $x=e^{i\pi \frac58}$, so that $\phi(x)=\tilde x$.

Next figure illustates the fact that $X_t Y_t(x)$ and $Y_tX_t(x)$ are well defined but distinct, for $t= \sqrt{2}$: 
\begin{figure}[h]
\centering
\includegraphics[width=330pt,height=115pt]{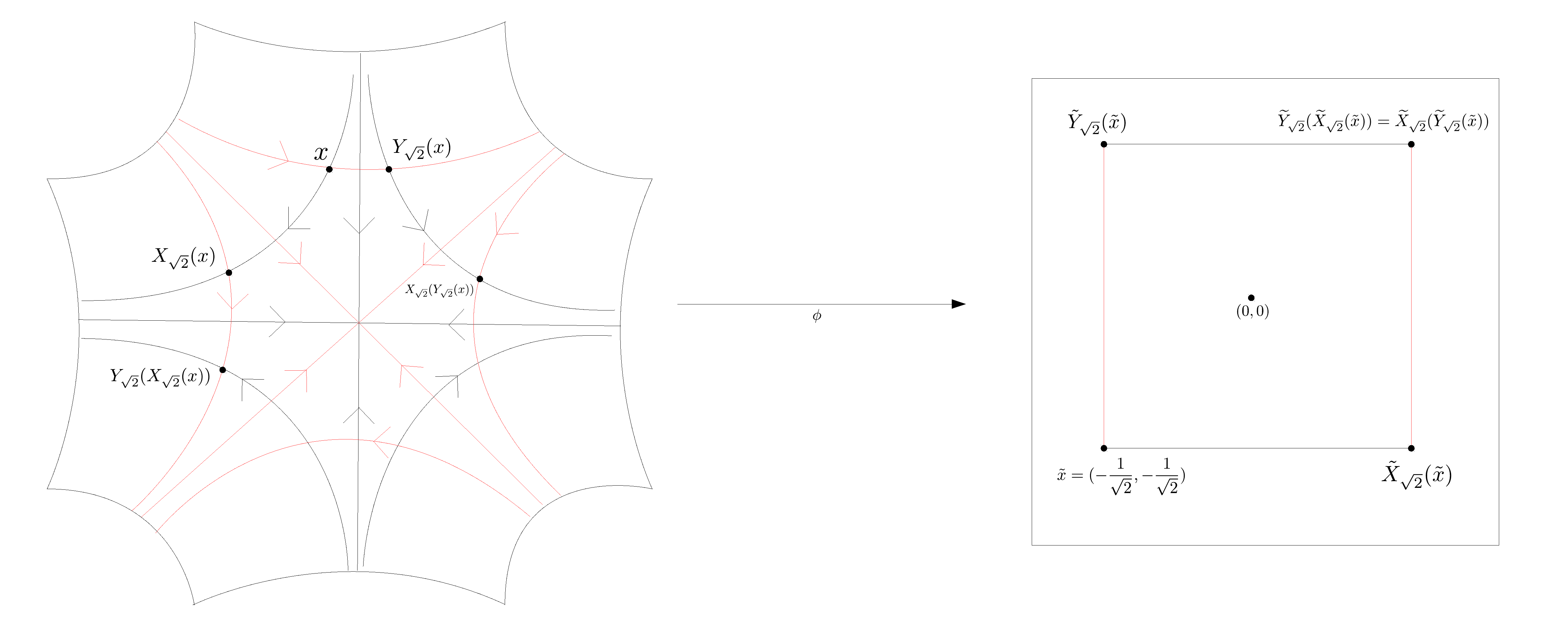}
\caption{The flow $X$ commutes with the flow of $Y$ until one of the composed orbits $X_tY_s$ crosses one of the axes.}
\label{f.localcomute}
\end{figure}

\end{rema}

 Next section states straightforward consequences of this definition.

\subsection{Commuting vector fields: first properties}\label{ss.commuting-first}
If $X$ and $Y$ are commuting vector fields then:
\begin{enumerate}
\item  for every $a,b,c,d\in\RR$, $aX+bY$ commutes with $cX+dY$;
\item  for every $x\in M$, and any $t\in \RR$ for which $Y_t$ is defined, one has 
 $$DY_t(x)X(x)=X(Y_t(x))$$
\item if $x\in\zero(X)$, 
 Then for any $t\in \RR$ for which $Y_t$ is defined, $Y_t(x)\in\zero(X)$.
 \item 
 $\col(X,Y)$ is invariant under the flow of $X$ in the following meaning: 
 if $x\in \col(X,Y)$ and if $X_t(x)$ is defined, then $X_t(x)\in\col(X,Y)$; in the same way, 
 $\col(X,Y)$ is invariant under the flow of $aX+bY$ for any $a,b\in\RR$. 
 \item
\label{d.ratio}
 if $\zero(Y)=\emptyset$, Then for each point $x$ in $\col(X,Y)$ there is $\mu(x)\in \RR$ 
 so that $X(x)=\mu(x)Y(x)$. The map $x\mapsto\mu(x)$ is called the \emph{ratio 
 between $X$ and $Y$ at $x$} and is continuous on $\col(X,Y)$ and can be
 extended in a $C^1$ map on the ambient manifold. 
\begin{proof}
One can extend $\mu$ to a small neighboorhod $V$ of $\col(X,Y)$ in the following way: given some riemannien metric on $M$, if $V$ is small enough then
$X$ is never orthogonal to $X$ in $V\setminus\col(X,Y)$. Thus, one can consider the vector field $Z$ obtained as the orthogonal projection of $X$ onto the direction of $Y$.
It is clear that $Z|_{\col(X,Y)}=X$ and that there exists a $C^1$ function $\psi:V\to\RR$ such that $Z(x)=\psi(x)Y(x)$. Since $\zero(Y)=\emptyset$, this implies that $\psi$ is an extension of $\mu$.
Moreover, clearly $\psi$ extends to $M$.\footnote{We shall see in Section~\ref{Coordinates} a particular case of this construction.}
\end{proof}
 
\item the ratio $\mu$ defined on $\col(X,Y)$ is invariant under the flow of $aX+bY$ for any $a,b\in \RR$.
\item if $\gamma$ is a periodic orbit of $X$ of period $\tau$ and if $t\in\RR$ is such that $Y_t$ 
is defined on 
 $\gamma$, then $Y_t(\gamma)$ is a periodic orbit of $X$ of period $\tau$. 
 The same occurs with the images by the flow of $cX+dY$ of periodic orbits of $aX+bY$, for $a,b,c,d\in \RR$.
 \item as a consequence of the previous item, if $\gamma$ is a periodic orbit of $X$ of period $\tau$ and if
 $\gamma$ is isolated among the periodic orbits of $X$ of the same period $\tau$, then $\gamma$ 
 is invariant under
 the flow of $Y$; as a consequence, $\gamma\subset \col(X,Y)$. 
\end{enumerate}

\subsection{Counter examples to Theorem~\ref{teoprincipal}}

Our proof is a long proof by \textit{reductio ad absurdum}. To achieve this goal, we shall first 
show that the existence of a pair $(X,Y)$ which do not satisfy the conclusion of
Theorem~\ref{teoprincipal} implies 
the existence of other pairs with simpler geometric behaviors. 

For this reason, it will be convenient to define the notion of counter examples in a formal manner.

\begin{defi}Let $M$ be a $3$-manifold, $U$ a compact subset of $M$ and $X$, $Y$ be $C^1$ vector fields on 
$M$.  We say that $(U,X,Y)$ is a \emph{counter example to Theorem~\ref{teoprincipal}} if
\begin{itemize}
 \item $X$ and $Y$ commute
 \item $\zero(Y)\cap U=\emptyset$
 \item $\zero(X)\cap \partial U=\emptyset$
 \item $\operatorname{Ind}(X, U)\neq 0.$
 \item the collinearity locus, $\col(X,Y,U)$, is contained in a $C^1$  surface which is a closed 
 submanifold of $M$.
\end{itemize}
\end{defi}

Let us illustrate our simplifying procedure by a simple argument:

\begin{rema}\label{r.orientable} If $M$ is a $3$-manifold carrying a counter example $$(U,X,Y)$$ to Theorem~\ref{teoprincipal}, 
then there is
an orientable manifold carrying a counter example to Theorem~\ref{teoprincipal}. 
Indeed, consider the orientation cover 
 $ \tilde M\to M$  and $\tilde U,\tilde X,\tilde Y$  the lifts of 
 $U,X,Y$ on $\tilde M$. Then the Poincar\'e-Hopf index of $\tilde X$ on $\tilde U$ is twice the one of $X$ 
 on $U$, and $(\tilde U,\tilde X,\tilde Y)$ is a 
 counter example to Theorem~\ref{teoprincipal}.
 
\end{rema}

Thus  we can assume (and we do it) without loss of generality that $M$ is orientable. 

Most of our simplifying strategy will now consist in combinations of the  following remarks

\begin{rema} If $(U,X,Y)$ is a counter example to Theorem~\ref{teoprincipal}, then there is $\varepsilon>0$ so
that  $(U, aX+bY,cX+dY)$ is also a counter example to Theorem~\ref{teoprincipal}, for every $a,b,c,d$ with 
$|a-1|<\varepsilon$, $|b|<\varepsilon$, $|c|<\varepsilon$ and $|d-1|<\varepsilon$. 
\end{rema}

\begin{rema} If $(U,X,Y)$ is a counter example to Theorem~\ref{teoprincipal}, then
  $(V, X,Y)$ is also a counter example to Theorem~\ref{teoprincipal} for any compact set $V\subset U$ 
  containing $\zero(X,U)$ in its interior.
\end{rema}

\begin{rema} Let $(U,X,Y)$ be a counter example to Theorem~\ref{teoprincipal} and assume that 
$\zero(X,U)= K_1\cup \dots \cup K_n$, where the $K_i$ are pairwise disjoint compact sets. 
Let $U_i\subset U$ be compact neighborhood of $K_i$ so that the $U_i$, $i=1,\dots, n$, are pairwise disjoint. 

Then there is $i\in\{1,\dots,n\}$ so that  $(U_i,X,Y)$ is  a counter example to Theorem~\ref{teoprincipal}.
\end{rema}

\section{Prepared counter examples $(U,X,Y)$ to Theorem~\ref{teoprincipal}}\label{s.reduzindo.a.prova}

\paragraph{Simplifying $\col(X,Y,U)$}
The aim of this paragraph is to prove
\begin{lemm}\label{l.annulus} If $(U,X,Y)$ is a 
counter example  to Theorem~\ref{teoprincipal} then there is a
 counter example $(\tilde U,\tilde X,\tilde Y)$ to Theorem~\ref{teoprincipal} and $\mu_0>0$ with the following property:

\begin{itemize}
\item for any $t\in [-\mu_0,\mu_0]$, the set of zeros of $\tilde X-t\tilde Y$ in $\tilde U$ consists 
precisely in $1$ periodic orbit $\gamma_t$ of $\tilde Y$;

\item for any $t\notin [-\mu_0,\mu_0]$, the set of zeros of $\tilde X-t\tilde Y$ in $\tilde U$ is empty; 
\item $\col(\tilde X,\tilde Y,\tilde U)$ is a $C^1$ annulus;
\item there is a $C^1$-diffeomorphism 
 $\varphi\colon\RR/\ZZ\times [-\mu_0,\mu_0]\to \col(\tilde X,\tilde Y,\tilde U)$ so that, for every 
 $t\in[-\mu_0,\mu_0]$, one has 
 $$\varphi(\RR/\ZZ\times\{t\})=\gamma_t.$$

\end{itemize}
\end{lemm}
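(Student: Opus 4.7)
The plan is to use Sard's theorem and the homotopy invariance of the Poincar\'e-Hopf index to reduce to the case where $0$ is a regular value of (an extension of) the ratio $\mu$ on $S$, and then to use the implicit function theorem on a cross-section of $Y$, together with the nonvanishing of the index, to continue a single periodic-orbit component of $\zero(X,U)$ into a $C^1$ family of periodic orbits of $Y$ parametrised by $\mu$. The union of these orbits will be the desired annulus $B$, and $\tilde U$ will be a thin tubular neighborhood of $B$ in $M$.

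Extend $\mu$ from $\col(X,Y,U)$ to a $C^1$ function $a$ on a neighborhood in $M$ (Section~\ref{ss.commuting-first}, item~\ref{d.ratio}). By Sard applied to $a|_S$ there is a regular value $t_0$ of $a|_S$ arbitrarily close to $0$; since $\zero(X)\cap\partial U=\emptyset$ on the compact set $\partial U$, we have $\zero(X-tY)\cap\partial U=\emptyset$ for $|t|$ small, and the homotopy invariance of the index gives $\ind(X-t_0Y,U)=\ind(X,U)\neq 0$. Replacing $X$ by $X-t_0Y$ (which does not change the collinearity locus), we may assume that $0$ is a regular value of $a|_S$. Then $(a|_S)^{-1}(0)\cap U$ is a compact $C^1$ $1$-submanifold of $S$, i.e.\ a finite disjoint union of circles. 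An \emph{all-or-nothing} argument shows that each circle $\si$ meeting $\zero(X,U)$ lies entirely in $\zero(X,U)$ and is a periodic orbit of $Y$: for $x\in\si\cap\zero(X,U)$ the $Y$-orbit of $x$ stays in $\col\cap(a|_S)^{-1}(0)$ and hence in its connected component $\si$, and a connected injective $Y$-orbit contained in the compact $1$-manifold $\si$ on which the non-vanishing $Y$ is tangent must exhaust $\si$ (otherwise its $\alpha$- or $\omega$-limit point would be a fixed point of $Y$). The index $\ind(X,U)$ decomposes as a finite sum over these circles, so at least one of them, call it $\gamma_0$, has $\ind(X,\gamma_0)\neq 0$; shrink to a compact tubular neighborhood $W\subset U$ of $\gamma_0$ with $W\cap\zero(X)=\gamma_0$.

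Choose $p_0\in\gamma_0$ and a $C^1$ $2$-disc $\Si$ transverse to $Y$ at $p_0$. Since $Y(p_0)\in T_{p_0}S$ (because $\col\subset S$ is $Y$-invariant) while $Y(p_0)\notin T_{p_0}\Si$, the disc $\Si$ is transverse to $S$ and $C:=\Si\cap S$ is a $C^1$ curve through $p_0$ whose tangent differs from $T_{p_0}\gamma_0=\ker d(a|_S)_{p_0}$. The implicit function theorem applied to $a|_C$ therefore produces, for some $\mu_0>0$, a $C^1$ path $s\in[-\mu_0,\mu_0]\mapsto p_s\in C$ passing through $p_0$ at $s=0$ and satisfying $a(p_s)=s$. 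For each such $s$, the nonvanishing index $\ind(X-sY,W)=\ind(X,W)\neq 0$ forces a zero $x_s\in\zero(X-sY)\cap W\subset\col\cap W$; the $Y$-orbit of $x_s$ lies in $\col\subset S$ and therefore meets $\Si$ only at points of $C\cap a^{-1}(s)$ close to $p_0$, which by uniqueness is $\{p_s\}$. Hence $p_s\in\col$, and the same uniqueness applied to the first return of $Y$ on $\Si$ forces the $Y$-orbit $\gamma_s$ through $p_s$ to be periodic, of period $\tau(s)$ depending $C^1$ on $s$. Setting $B:=\bigcup_{|s|\leq\mu_0}\gamma_s$, the map $\varphi(\theta,s):=Y_{\theta\tau(s)}(p_s)$ is a $C^1$-diffeomorphism $\RR/\ZZ\times[-\mu_0,\mu_0]\to B$. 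Choose a tubular neighborhood of $S$ of the form $S\times(-\eps,\eps)$ and let $\tilde U:=B\times[-\eps,\eps]$; then $\tilde U$ is a compact solid torus, $\tilde U\cap S=B$, and hence $\col(\tilde X,\tilde Y,\tilde U)=\tilde U\cap S\cap\col=B$. The remaining bullets of the lemma are immediate: $\zero(\tilde X-t\tilde Y)\cap\tilde U=B\cap a^{-1}(t)=\gamma_t$ for $|t|\leq\mu_0$, and is empty for $|t|>\mu_0$ since $a$ maps $B$ onto $[-\mu_0,\mu_0]$.

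The principal obstacle is that $\col(X,Y,U)$ is only assumed to be \emph{contained} in the surface $S$ and may a priori be much smaller than an open subset of it, so the smooth level curves of $a|_S$ furnished by Sard need not lie in $\col$. The only leverage available is the nonvanishing of $\ind(X,W)$: it guarantees, for every small $s$, the existence of a point of $\col\cap W$ with $a$-value $s$, and the uniqueness statement of the implicit function theorem on the transverse curve $C=\Si\cap S$ then pins this point down to $p_s$. This is the mechanism that ``thickens'' $\col$ near $\gamma_0$ into a genuine $C^1$ annulus parametrised by $\mu$, and it is the heart of Lemma~\ref{l.annulus}.
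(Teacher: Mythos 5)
Your overall strategy (perturb $X$ by $-t_0Y$ to make the relevant level of the ratio function regular, decompose $\zero(X,U)$ into finitely many periodic orbits of $Y$, localize the nonzero index on one orbit $\gamma_0$, then use $\ind(X-sY,W)\neq 0$ together with uniqueness on a transverse curve to produce the $C^1$ family $\gamma_s$ and the annulus) is essentially the paper's strategy, and the second half of your argument is a correct variant of the paper's key claim. But there is a genuine gap at the very first step: you apply Sard's theorem to the $C^1$ function $a|_S$ on the $2$-dimensional surface $S$. Sard's theorem for a map from an $m$-manifold to an $n$-manifold requires regularity $C^{m-n+1}$, hence $C^2$ here, and it genuinely fails for $C^1$ functions of two variables (Whitney's example of a $C^1$ function on $\RR^2$ whose critical values contain an interval). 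In the setting of Theorem~\ref{teoprincipal} the vector fields are only $C^1$, so the extension $a$ of the ratio $\mu$ and the surface $S$ are only $C^1$, and you cannot assert the existence of any regular value $t_0$ of $a|_S$. Everything you later extract from that assumption — that $(a|_S)^{-1}(0)$ is a finite union of circles (hence the decomposition of $\zero(X,U)$ into finitely many periodic orbits on which the index localizes), and that $\ker d(a|_S)_{p_0}=T_{p_0}\gamma_0$ so that $d(a|_C)_{p_0}\neq 0$ for the implicit function theorem on $C=\Si\cap S$ — is therefore unsupported.

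The paper circumvents exactly this obstruction: it applies Sard only to the restriction of $\mu$ to a $1$-dimensional transversal $\sigma$ (a finite union of segments in $S$ cutting every $Y$-orbit of the lamination $\cL=\bigcup_{|t|\le\mu_1}\zero(X-tY)\cap U$), where $C^1$ regularity suffices since the target is also $1$-dimensional. A regular value $t$ of $\mu|_\sigma$ then gives both of the facts you need: finiteness of $\mu^{-1}(t)\cap\sigma$ yields that $\zero(X-tY)\cap U$ consists of finitely many periodic orbits, and the nonvanishing of the derivative of $\mu$ along a small segment $I\subset\sigma$ through the chosen orbit replaces your hypothesis $d(a|_S)_{p_0}\neq 0$ in the pinning-down argument. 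So your proof can be repaired by replacing Sard-on-$S$ with Sard on such a cross-section; as written, however, the first reduction is invalid at the stated regularity. (A further minor point: you should justify that an orbit of $\zero(X-sY)$ trapped in the thin tube $W$ actually meets the disc $\Si$ — e.g.\ by shrinking $W$ after choosing $\Si$ so that $\Si\cap W$ is a full meridian disc — before invoking the uniqueness of $p_s$.)
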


\begin{proof}  By hypothesis $\col(X,Y,U)$ is contained in a $C^1$-surface $S$.  
Notice that there is $\mu_1>0$ so that for any $t\in[-\mu_1,\mu_1]$ one has 
$\zero(X-t Y)\cap\partial U=\emptyset$ and 
$\operatorname{Ind}(X-t Y, U)\neq 0$. In particular, we have that $\zero(X-tY,U)\neq\emptyset$.

As $X-tY$ and $Y$ commute, $\zero(X-t Y,U)$ is invariant under the flow of $Y$.  Futhermore,
as $\zero(X-t Y)$ does not intersect $\partial U$ the $Y$-orbit of a point $x\in \zero(X-t Y,U)$ 
remains in the compact set
$U$ hence is complete. 

Consider now the ratio function $\mu\colon \col(X,Y)\to\RR$, defined in the item (\ref{d.ratio}) of 
Subsection \ref{ss.commuting-first}. It follows that, for $x\in \col(X,Y)$,  
$\mu(x)=t\Leftrightarrow x\in\zero(X-tY)$. The map $\mu$ is invariant under the flows of 
$X$ and $Y$ (on $\col(X,Y)$). 
As mentioned in \ref{ss.commuting-first}, the  map $\mu$ 
can be extended on $M$ as  $C^1$ map still denoted $\mu$ 
(but no more $X,Y$-invariant). 
 
Let $\cL=\bigcup_{t\in[-\mu_1,\mu_1]}\zero(X-tY)\cap U$. Then $\cL$ is a compact set, contained in $S$ disjoint from
the boundary of $U$ and invariant under $Y$: it is a compact lamination of $S$. 
 
By applying the flox box theorem and a standard compactness argument we can take $\si\subset S$ a union of finitely many compact segments with end points out of $\cL$ and
so that the interior of  $\sigma$
cuts  transversely every orbit of $Y$ contained in $\cL$. 

\begin{clai}
Lebesgue almost every $t\in[-\mu_1,\mu_1]$ is a regular value of the restriction of $\mu$ to $\si$. 
\end{clai}
\begin{proof}Recall that Sard's theorem requires a regularity $n-m+1$ if one consider maps from an 
$m$-manifold to an $n$-manifold.  As $\mu$ is $C^1$ and $\dim\si=1$ we can apply Sard's theorem to the restriction of 
$\mu$ to $\sigma$, concluding. 
\end{proof}

Consider now a regular value $t\in(-\mu_1,\mu_1)$ of the restriction of $\mu$ to $\si$. Then 
$\mu^{-1}(t)\cap{\si}$ consists in finitely many points. Furthermore, $\mu^{-1}(t)\cap{\si}$ contains 
$\zero(X-tY)\cap \sigma$. 

\begin{clai}For $t\in [-\mu_1,\mu_1]$, regular value  of the restriction of $\mu$ to $\si$, 
the compact set $\zero(X-tY)\cap U$ consists in finitely many periodic orbits $\gamma_i$, $i\in\{1,\dots, n\}$ of $Y$. 
\end{clai}
\begin{proof} $\zero(X-tY)\cap U$ is a compact sub lamination of $\cL\subset S$ 
consisting of orbits of $Y$, and contained in 
$\mu^{-1}(t)$. Now,
$\si$ cuts transversely each orbit of this lamination and $\si\cap \mu^{-1}(t)$ is finite. One deduces that
$\zero(X-tY)\cap U$ consists in finitely many compact leaves, concluding.
\end{proof}

We now fix a regular value $t\in(-\mu_1,\mu_1)$ of the restriction of $\mu$ to $\si$.

Since $\zero(X-tY,U)\neq\emptyset$ we have that the integer $n$ of the above claim is positive.
Moreover, notice that $\ind(X-tY,U)=\sum_{i=1}^n\ind(X-tY,\gamma_i)$.  Thus there is $i$ so that 
$$\ind(X-tY,\gamma_i)\neq 0.$$
\begin{clai}
 There is a neighborhood $\Ga_i$ of $\gamma_i$ in $S$ which is contained in $\col(X,Y,U)$ and which consists in periodic 
 orbits of $Y$. 
\end{clai}
\begin{proof}
Let $p$ be a point in $\si\cap \gamma_i$. As $p$ is a regular point of the restriction of $\mu$ to $\sigma$
 there is a segment $I\subset \si$ centered at $p$ so that the restriction of $\mu$ to $I$ is injective and the 
 derivative of $\mu$ does not vanish. 
 
 As $\gamma_i$ has non-zero index for any $s$ close enough to $t$, $\zero(X-sY)$ contains an isolated  
 compact subset $K_s$
 contained in a small neighborhood of $\gamma_i$, and hence in $U$, 
 thus in $\col(X,Y,U)$ and thus in a small neighborhood
 of $\gamma_i$ in $\cL\subset S$.  This implies that each orbit of $Y$ contained in $K_s$ cuts $I$. 
 However, $\mu$ is constant equal to $s$ on $K_s$ and thus $\mu^{-1}(s)\cap I$ consist in a unique point. One deduces that 
 $K_s$ is a compact orbit of $Y$. 
 
 Since this holds for any $s$ close to $t$, one obtain that any point $q$ of $I$ close to $p$ is the 
 intersection point of $K_{\mu(q)}\cap I$. In other words, a neighborhood of $p$ in $I$
 is contained in $\col(X,Y,U)$ and the corresponding leaf of $\cL$ is a periodic orbit of $Y$, concluding.
\end{proof}

Notice that $\Ga_i$ is contained in  $\col(X,Y,U)$ so that the function $\mu$ is invariant under $Y$ on $\Ga_i$. As the 
derivative of $\mu$ is non vanishing (by construction) on $\Ga_i\cap I$ 
one gets that the derivative of the restriction of $\mu$ to $\Ga_i$
is non-vanishing. One deduces that $\Ga_i$ is diffeomorphic to an annulus: 
it is foliated by circles and these circles 
admit a transverse orientation. 

For concluding the proof it remains fix $t\in(-\mu_1,\mu_1)$ regular value of $\mu$, then one fixes $\tilde X=X-tY$ and $\tilde Y=Y$.
One  chooses a compact neighborhood $\tilde U$ of $\gamma_i$ in $M$, which is a 
manifold with boundary, whose boundary is tranverse to $S$ and so that $\tilde U\cap S=\Ga_i$. By construction, 
$(\tilde U,\tilde X,\tilde Y)$ satisfies all the announced properties.
\end{proof}

\subsection{Prepared counter examples to Theorem~\ref{teoprincipal}}

\begin{defi}\label{d.prepared}
We say that $(U,X,Y,\Si,\cB)$ is a \emph{prepared counter example to Theorem~\ref{teoprincipal}} if 
\begin{enumerate}
 \item $(U,X,Y)$ is a counter example to Theorem~\ref{teoprincipal}
 \item There is $\mu_0>0$ so that $(U,X,Y)$ satisfies the conclusion of Lemma~\ref{l.annulus}:
 \begin{itemize}
\item for any $t\in [-\mu_0,\mu_0]$, the set of zeros of $ X-t Y$ in $ U$ consists 
precisely in $1$ periodic orbit $\gamma_t$ of $Y$;

\item for any $t\notin [-\mu_0,\mu_0]$, the set of zeros of $ X-t Y$ in $ U$ is empty; 
\item $\col( X, Y,U)$ is a $C^1$ annulus;
\item\label{i.mu} there is a $C^1$-diffeomorphism 
 $\varphi\colon\RR/\ZZ\times [-\mu_0,\mu_0]\to \col( X, Y, U)$ so that, for every 
 $t\in[-\mu_0,\mu_0]$, one has 
 $$\varphi(\RR/\ZZ\times\{t\})=\gamma_t.$$
\end{itemize}
\item $U$ is endowed with a foliation by discs; more precisly there is a smooth submersion 
$\Si\colon U\to \RR/\ZZ$ whose fibers $\Si_t=\Si^{-1}(t)$ are discs; furthermore, 
the vector field $Y$ is transverse to the fibers $\Si_t$. 
\item\label{i.period}  Each periodic orbit $\gamma_s$, $s\in[-\mu_0,\mu_0]$, of $Y$ cuts every  
disc $\Si_t$ in exactly one point. In particular
 the period  of $\gamma_s$ coincides with its return time on $\Si_0$ and is denoted $\tau(s)$, for 
 $s\in[-\mu_0,\mu_0]$.  
 
 Thus $s\mapsto \tau(s)$ is a $C^1$-map on $[-\mu_0,\mu_0]$. 
We require that the derivative of $\tau$ does not vanish on $[-\mu_0,\mu_0]$.
\item \label{i.basis} $\cB$ is a triple $(e_1,e_2,e_3)$ of $C^0$ vector fields on $U$ so that
\begin{itemize}
\item for any $x\in U$ $\cB(x)=(e_1(x),e_2(x),e_3(x))$ is a basis of $T_xU$. 
\item $e_3=Y$ everywhere
\item the vectors $e_1,e_2$ are tangent to the fibers $\Si_t$, $t\in \RR/\ZZ$.  In other words, 
$D\Si(e_1)=D\Si(e_2)=0$
\item The vector $e_1$ is tangent to $\col(X,Y)$ at each point of $\col(X,Y)$. 
\end{itemize}
\end{enumerate}
\end{defi}

\begin{lemm}
\label{l.prepared} 
If there exists a counter example $(U,X,Y)$ to Theorem~\ref{teoprincipal} then there is 
a prepared counter example $(\tilde U,\tilde X,\tilde Y,\Si, \cB)$ to Theorem~\ref{teoprincipal}.
\end{lemm}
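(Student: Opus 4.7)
The lemma asks us to add, to the output of Lemma~\ref{l.annulus}, a disc fibration over $\RR/\ZZ$ transverse to $\tilde Y$ (together with the non-degeneracy $\tau'\neq 0$ of the period function on the annulus of collinearity) and a continuous adapted frame $\cB$. My strategy is to start from $(\tilde U,\tilde X,\tilde Y)$ produced by Lemma~\ref{l.annulus} and carry out three reductions: (i)~arrange $\tau'$ to be non-vanishing, (ii)~shrink $\tilde U$ to a solid-torus neighborhood of one orbit $\gamma_{t_0}$ and build the submersion $\Sigma$ from a Poincar\'e section, (iii)~define the frame $\cB$ on the trivial disc bundle.

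\textbf{Step (i): making $\tau'\neq 0$.} Lemma~\ref{l.annulus} yields a $C^1$ period function $s\in[-\mu_0,\mu_0]\mapsto\tau(s)$. If there exists $t_0$ with $\tau'(t_0)\neq 0$, replace $\tilde X$ by $\tilde X-t_0\tilde Y$ (which, by item~(1) of Subsection~\ref{ss.commuting-first}, still gives a counter example whose collinearity locus is the same annulus and whose parameterization is shifted by $t_0$), and restrict $\mu_0$ so that $\tau'\neq 0$ throughout. Otherwise $\tau$ is constant, and we replace $\tilde Y$ by $\tilde Y':=\tilde Y+\epsilon\tilde X$ for small $\epsilon\neq 0$: this still commutes with $\tilde X$ and has no common zero with $\tilde X$ on $\tilde U$, and it satisfies $\col(\tilde X,\tilde Y')=\col(\tilde X,\tilde Y)$. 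On $\gamma_t$ one has $\tilde X=t\tilde Y$, hence $\tilde Y'=(1+\epsilon t)\tilde Y$, so $\gamma_t$ is still a periodic orbit of $\tilde Y'$ with period $\tau(t)/(1+\epsilon t)$, whose derivative $-\tau(0)\epsilon/(1+\epsilon t)^2$ never vanishes. Finally $\zero(\tilde X-s\tilde Y')=\zero((1-s\epsilon)\tilde X-s\tilde Y)$ is, for $s$ small, the same collection of orbits $\gamma_{s/(1-s\epsilon)}$ reparameterized, so items (1)--(2) of Definition~\ref{d.prepared} persist with a possibly smaller $\mu_0$.

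\textbf{Step (ii): the disc fibration.} Pick $p_0\in\gamma_0$ and a small $C^1$ embedded disc $D_0\subset M$ transverse to $\tilde Y$ at $p_0$. Since $\tilde Y$ is non-vanishing and $\gamma_0$ is closed, the first return map $\cP_{\tilde Y}\colon V_0\to D_0$ is defined on a neighborhood $V_0\subset D_0$ of $p_0$, with $\cP_{\tilde Y}(p_0)=p_0$ and $C^1$ return time $T\colon V_0\to(0,\infty)$. Shrinking $V_0$, the annulus $A=\col(\tilde X,\tilde Y,\tilde U)$ meets $D_0$ transversely in a $C^1$ arc hitting every $\gamma_s$ ($s\in[-\mu_0,\mu_0]$) in a single point $x_s\in V_0$. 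Redefine
$$\tilde U:=\bigl\{\tilde Y_r(x)\colon x\in V_0,\;0\le r\le T(x)\bigr\}$$
and set $\Sigma(\tilde Y_r(x)):=r/T(x)\bmod 1$; this is consistent at $r=T(x)$ since $\tilde Y_{T(x)}(x)=\cP_{\tilde Y}(x)$ and both sides yield $0\bmod 1$. Then $\Sigma\colon\tilde U\to\RR/\ZZ$ is a $C^1$ submersion with embedded disc fibers $\Sigma_t$, and $d\Sigma(\tilde Y)=1/T>0$ shows $\tilde Y$ is transverse to every $\Sigma_t$. Each $\gamma_s$ crosses $\Sigma_t$ in the unique point $\tilde Y_{tT(x_s)}(x_s)$, with $\Sigma_0$-return time $\tau(s)$; hence items~(3)--(\ref{i.period}) of Definition~\ref{d.prepared} hold and $\tilde U$ is a solid torus.

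\textbf{Step (iii): the frame.} Set $e_3:=\tilde Y$. The vertical subbundle $T\Sigma:=\ker D\Sigma\subset T\tilde U$ is a rank-$2$ $C^0$ plane bundle, orientable (from orientations of $M$ and $\tilde Y$), and trivial since $\tilde U$ is a solid torus. On $A$ the intersections $A\cap\Sigma_t$ are $C^1$ curves (by transversality of $A$ and $\Sigma_t$), giving a canonical nowhere-zero section $e_1|_A$ of $T\Sigma|_A$ by their tangent vectors. Extend $e_1$ to a continuous nowhere-zero section of $T\Sigma$ over $\tilde U$, using the triviality of $T\Sigma$ and the fact that $\tilde U$ deformation-retracts onto $A$ along the direction normal to $A$. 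Let $e_2$ be the positive $\pi/2$-rotation of $e_1$ inside the oriented plane $T\Sigma$ (for any chosen Riemannian metric). Then $\cB=(e_1,e_2,e_3)$ satisfies all of item~(\ref{i.basis}), and $(\tilde U,\tilde X,\tilde Y,\Sigma,\cB)$ is the desired prepared counter example. The main obstacle is Step~(i) in the degenerate case where $\tau$ is constant: one must verify simultaneously that the replacement $\tilde Y\leadsto\tilde Y+\epsilon\tilde X$ preserves commutation, the non-vanishing of the index, the absence of common zeros, and the annulus structure of $\col$, while breaking the degeneracy; the subsequent steps are then standard differential topology near a closed orbit of a non-vanishing vector field.
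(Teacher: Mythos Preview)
Your proof is correct and follows essentially the same approach as the paper's: replace $Y$ by $Y+bX$ to force $\tau'\neq 0$, shrink $U$ to a flow-box solid torus around $\gamma_0$ to obtain the fibration $\Sigma$, and build the frame $\cB$ by choosing $e_1$ tangent to $\col(X,Y)\cap\Sigma_t$ along the annulus and extending. The only differences are cosmetic---you reverse the order of the $\tau'$-adjustment and the fibration construction, split Step~(i) into two cases rather than treating them uniformly, and supply more explicit justifications (the Poincar\'e-section construction of $\Sigma$ and the topological argument for extending $e_1$) where the paper simply asserts the result.
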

\begin{proof} The two first items  of the definition of prepared counter example to 
Theorem~\ref{teoprincipal} are given by Lemma~\ref{l.annulus}.
For getting the third item, it is enough to shrink $U$.  
For getting item~(\ref{i.period}),  one replace $Y$ by $Y+bX$ for some $b\in\RR$, $|b|$ small enough. 
This does not change the orbits $\gamma_t$, as $X$ and $Y$ are both tangent to $\gamma_t$, 
but it changes its period.  Thus, this allows us to change the derivative of the period $\tau$ at $s=0$. 
Then one shrink again $U$ and $\mu_0$ so that the derivative  of 
$\tau$ will not vanish on $\col{(X,Y,U)}$. 

Consider any metric on $U$. 
For any point $x$ in the annulus $\col(X,Y,U)$ and contained  in the fiber $\Si_t$ we chose $e_1(x)$ as being a unit vector tangent to 
the segment $\col(X,Y)\cap \Si_t$. We extend $e_1$ as a continuous vector field on $U$ tangent to the fibers $\Si_t$.
We choose 
$e_2(x)$, for any $x$ in a fiber $\Si_t$  as being an  unit vector tangent to $\Si_t$ and orthogonal to $e_1(x)$. Now $e_3(x)=Y(x)$ is 
transverse to the plane spanned by $e_1(x),e_2(x)$ so that $\cB(x)=(e_1(x),e_2(x), e_2(x))$  is a basis of $T_xM$. This provides 
the basis announced in item~\ref{i.basis}. 
 \end{proof}

\begin{rema}\label{r.prepared} If $(U,X,Y,\Si, \cB)$ is a prepared counter example to 
Theorem~\ref{teoprincipal}, then for every $t\in(-\mu_0,\mu_0)$, $(U,X-tY,Y,\Si, \cB)$ is a prepared 
counter example to 
Theorem~\ref{teoprincipal}. 
\end{rema}

Whenever $(U,X,Y,\Si,\cB)$ is a prepared counter example to Theorem~\ref{teoprincipal},
we shall denote by $\cP$ the \emph{first return map},
defined on a neighborhood of $\col(X,Y)\cap\Sigma_0$ in $\Sigma_0$.

\begin{rema}\label{r.P}
As the ambient manifold is assumed to be orientable (see Remark~\ref{r.orientable}), the vector field $Y$ 
is normally oriented so that  the Poincar\'e map 
$\cP$ preserves the orientation. 
\end{rema}

\subsection{Counting the index of a prepared counter example}

\begin{defi}
Let $(U,X, Y,\Si, \cB)$  be a prepared counter example to Theorem~\ref{teoprincipal}. 
In particular, $U$ is a solid torus  ($C^1$-diffeomorphic to $\DD^2\times\RR/\ZZ$) and $\zero(X-tY)$, 
$t\in(-\mu_0,\mu_0)$, 
is an essential simple curve $\gamma_t$ isotopic to $\{0\}\times\RR/\ZZ$. 
An \emph{essential torus $T$ } is the image of a continuous map from the torus $\TT^2$ in the interior of $U$, disjoint 
from $\gamma_0=\zero(X)$ and homotopic, in $U\setminus \gamma_0$, to the boundary of 
a tubular neighborhood of 
$\gamma_0$. 
\end{defi}

In other words, $H_2(U\setminus\gamma_0,\ZZ)=\ZZ$, and   $T$ is essential if it is the generator of this second 
homology group.

We shall now describe how we use the basis $\cB$, which comes with a prepared counter example, 
and an essential 
torus $T$ to calculate the index. 

For each point $x\in U$, one can write $X(x)$ as a linear combination of the vectors $e_1(x),e_2(x)$ and $e_3(x)$. 
Notice that, since $e_3=Y$ everywhere, the $e_3$-coordinate $\mu$ of $X$ is a $C^1$ 
extension of the ratio function, $\mu$ 
that we introduced on $\col(X,Y)$ in item (\ref{d.ratio}) of Subsection~\ref{ss.commuting-first}. Therefore, there exists $C^1$ functions
$\alpha,\beta,\mu:U\to\RR$ such that

\begin{equation}
\label{e.xnabase}
X(x)=\alpha(x)e_1(x)+\beta(x)e_2(x)+\mu(x)e_3(x).
\end{equation}

For $x\notin \gamma_0$ one considers the vector
\begin{equation}\label{e.xnaesfera}
\cX(x)=\frac{1}{\sqrt{\alpha(x)^2+\beta(x)^2+\mu(x)^2}}\left(\alpha(x),\beta(x),\mu(x)\right)\in\SS^2.
\end{equation}
The map restriction $\cX|_T\colon T\to \SS^2$ has a topological degree,
which, by Lemma~\ref{l.Using the tubular neighboorhod to calculate the index}, coincides with 
$\operatorname{Ind}(X, U)$, 
for some choice of an orientation on $T$.

\subsection{The normally hyperbolic case}\label{partial}

In this section we illustrate our procedure by giving the very simple proof of 
Theorem~\ref{teoprincipal} in the case where $\col(X,Y,U)$ is furthermore assumed to be 
normally hyperbolic for the flow of $Y$. 

Here we shall prove 

\begin{lemm} 
\label{l.casoph}
Let $(U,X,Y,\Si,\cB)$ be a prepared counter example to Theorem~\ref{teoprincipal}.
Then, the first return map $\cP\colon\Sigma_0\to\Sigma_0$ of the flow of $Y$ satisfies:  
for every point $x$ of $\col(X,Y,U)\cap \Si_0$, the unique eigenvalue of the derivative of $\cP$ 
at $x$ 
is $1$.  
\end{lemm}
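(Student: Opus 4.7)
The plan is to argue by contradiction. Assume there exists $x \in \col(X,Y,U) \cap \Si_0$ such that $D\cP(x)$ admits an eigenvalue $c \neq 1$. Since $\cP$ fixes pointwise the $C^1$-arc $\col(X,Y,U) \cap \Si_0$ (each of its points is the intersection of $\Si_0$ with a periodic orbit $\gamma_s$), the matrix $D\cP(x)$ already has $1$ as an eigenvalue with eigenvector $e_1(x)$, so $c$ is the remaining eigenvalue. The characteristic polynomial factors as $(\lambda-1)(\lambda-c)$, so $c$ is real; by Remark~\ref{r.P} the return map $\cP$ preserves the orientation of $\Si_0$, hence $\det D\cP(x) = c > 0$. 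Therefore $c$ is a positive real number different from $1$; in particular $|c|\neq 1$ and the periodic orbit $\gamma_{s_0}$ of $Y$ through $x$ is partially hyperbolic.

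By $C^1$-continuity of $D\cP$ along the arc $\col(X,Y,U) \cap \Si_0$, the partial hyperbolicity persists on a whole sub-arc of periodic orbits $\gamma_s \subset S$ near $\gamma_{s_0}$. The stable manifold theorem (in the form of Hirsch--Pugh--Shub for normally hyperbolic invariant laminations) then produces a $C^1$ foliation $\cF$ of some neighborhood $V$ of $\gamma_{s_0}$ in $M$ by $2$-dimensional $Y$-invariant leaves (strong stable leaves if $0<c<1$, strong unstable leaves if $c>1$), each leaf containing exactly one orbit $\gamma_s$. Since $X$ commutes with $Y$, the $X$-flow permutes leaves of $\cF$; and since $X = \mu(s)\,Y$ on $S$, the $X$-flow fixes each $\gamma_s$ setwise, hence preserves each leaf of $\cF$. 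Consequently $X$, and therefore the normal projection $N = X - \mu Y$, is tangent to every leaf of $\cF$. Intersecting $\cF$ with the fiber $\Si_0$ yields a $C^1$ one-dimensional foliation of $V\cap\Si_0$, whose tangent line field $L$ extends continuously across $S\cap\Si_0$ (where it coincides with the eigendirection of $D\cP$ for the eigenvalue $c$) and along which $N$ is collinear wherever $N \neq 0$.

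To close the argument, I would choose essential loops $\gamma^\pm \subset U^\pm$ close to $\gamma_{s_0}$ along which the linking numbers $\ell^\pm$ of Proposition~\ref{p.link} are computed. Because $\gamma^+$ and $\gamma^-$ are homotopic within $V$ through loops that may cross $S$, and $L$ is defined on all of $V\cap\Si_0$, the winding of $L$ along $\gamma^+$ equals its winding along $\gamma^-$. Combined with $N\parallel L$ off $S$, this forces $\ell^+ = \ell^-$, and hence $|\ind(X,U)| = |\ell^+-\ell^-| = 0$ by Proposition~\ref{p.link}, contradicting that $(U,X,Y)$ is a counter example. The step I expect to demand the most care is the final orientation bookkeeping: translating the winding of the line field $L$ into the vector winding of $N$ in the basis $(e_1,e_2)$ without an orientation mismatch when crossing $S$. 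This should be handled by exploiting that $V\cap\Si_0$ deformation-retracts onto the contractible arc $S\cap\Si_0$, so that $L$ is orientable on $V\cap\Si_0$ and the sign of $N$ along a fixed orientation of $L$ is constant on each of $U^+\cap V$ and $U^-\cap V$.
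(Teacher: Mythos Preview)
Your argument is correct in spirit, and the overall strategy---contradiction via normal hyperbolicity of a sub-annulus of periodic orbits, $X$-invariance of the resulting stable foliation, hence $N$ tangent to a line field defined across $S$---matches the paper's. The difference is in how you extract the contradiction at the end.

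The paper does not invoke the linking numbers $\ell^\pm$ or Proposition~\ref{p.link} (which only appears later, in Section~\ref{Coordinates}). Instead, once it has the normally hyperbolic situation, it \emph{shrinks $U$, shifts $X$ to $X-tY$, and modifies the basis $\cB$}: it chooses a new $\tilde e_2$ tangent to $E^s_Y$ everywhere on the new $U$, so that $(\tilde U,\tilde X,Y,\Si,\tilde\cB)$ is still a prepared counter example. Then ``$X$ tangent to $E^s_Y$'' reads as $\alpha\equiv 0$ in the new basis, so $\cX$ takes values in the great circle $\{x_1=0\}\subset\SS^2$; the map from any essential torus to $\SS^2$ is not surjective, hence has degree zero, and $\ind(X,U)=0$ directly by Lemma~\ref{l.Using the tubular neighboorhod to calculate the index}. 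This sidesteps all orientation bookkeeping.

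Your route---keeping $U$ and $\cB$ fixed and arguing via an orientable line field $L$ that $\ell^+=\ell^-$---also works, but your write-up slips between $V$ and $V\cap\Si_0$: the loops $\gamma^\pm$ are \emph{not} contained in $\Si_0$, so you need $L$ defined and orientable on all of $V$, not just on $V\cap\Si_0$. This is still true (the stable line bundle is orientable along $\gamma_{s_0}$ precisely because the transverse eigenvalue $c$ is positive, hence orientable on the solid torus $V$), and once you pick a nonvanishing section $\tilde e_2$ of $L$ on $V$, the expression of $\tilde e_2/|\tilde e_2|$ in the frame $(e_1,e_2)$ is a continuous map $V\to\SS^1$, so its degree agrees on $\gamma^+$ and $\gamma^-$; since $\cN=\pm\,\tilde e_2/|\tilde e_2|$ with constant sign on each component of $V\setminus S$, $\ell^+=\ell^-$ follows. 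So your ``delicate step'' does go through, but the paper's change-of-basis trick is the more economical way to the same conclusion and avoids the forward reference to Proposition~\ref{p.link}.
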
 
\begin{proof}
The argument is by contradiction. Let us denote $x_t=\gamma_t\cap\Sigma_0$, $t\in [-\mu_0,\mu_0]$, 
(recall $\gamma_t=\zero(X-tY)$). We assume that the derivative of 
$\cP$ at some point of $x_{t_0}$ has some eigenvalue of  different from $1$. 

Notice that the first return map $\cP$ is the identity map in restriction to the segment 
$\col(X,Y)\cap\Sigma_0$. In particular, the derivative of $\cP$ at $x_t$ admits $1$ as an eigenvalue. 
Since $\cP$ preserves the orientation (see Remark~\ref{r.P}), the other eigenvalue is positive.

\begin{clai} There exists $\tilde U\subset U$, $\tilde X= X-tY$ 
and a prepared counter example to Theorem~\ref{teoprincipal} 
$(\tilde U,\tilde X,Y,\Si,\cB)$ for which the surface 
$\col(\tilde X,Y,\tilde U)$ is normally hyperbolic. 
\end{clai}
\begin{proof}
As the property of having a eingenvalue of mudulus different from $1$ is an open condition, there exists an interval
$[\mu_1,\mu_2]\subset[-\mu_0,\mu_0]$ on which the condition holds. Consider $t=\frac{\mu_1+\mu_2}2$ and $\tilde X=X-tY$. 

Then, one obtains a new prepared counter example to Theorem~\ref{teoprincipal} by replacing $X$ by 
$\tilde X$ (see Remark~\ref{r.prepared}); now, by shrinking 
$U$ one gets a tubular neighborhood $\tilde U$ of $\gamma_t$ so that 
$\col(\tilde X,Y,\tilde U)=\bigcup_{s\in[\mu_1,\mu_2]}\gamma_s$. 

Moreover, the derivative of $\cP$ at each point $x_s$, $s\in[\mu_1,\mu_2]$, has an 
eigenvalue different from $1$ in a
 direction tranverse to $\col(\tilde X,Y,\tilde U)\cap\Sigma_0$. 
 By compactness and continuity these eigenvalues 
 are uniformly far from $1$ so that $\col(\tilde X,Y,\tilde U)\cap\Sigma_0$ 
 is normally hyperbolic for 
 $\cP$.
 
Thus $\col(\tilde X,Y,\tilde U)$ is an invariant normally hyperbolic annulus for the flow of $Y$.
\end{proof}

By virtue of the above claim (up to change $X$ by $\tilde X$ and $U$ by $\tilde U$) one may assume that 
$\col(X,Y,U)$ is normally hyperbolic, and (up change $Y$ by $-Y$) one may assume that 
$\col(X,Y,U)$ is normally contracting. 

This implies that every periodic orbit $\gamma_t$ has a local stable manifold $W^s_Y(\gamma_t)$ which is a 
$C^1$-surface 
depending continuously on $t$ for the $C^1$-topology  and the collection of 
these surfaces build a $\cC^{0}$-foliation $\cF^s_Y$ tangent to a 
continuous plane 
field $E^s_Y$,  in a  neighborhood of $\col(X,Y,U)$. 
Furthermore, $E^s_Y$ is tangent to $Y$, and hence is tranverse to
the fibers of $\Si$. 

Up to shrink $U$, one may assume that $\cF^s_Y$ and $E^s_Y$ are defined on $U$. 

\begin{clai} There is a basis $\tilde \cB=(\tilde e_1,\tilde e_2,\tilde e_3)$ so that $(U,X,Y,\Si,\tilde\cB)$
is a prepared counter example to Theorem~\ref{teoprincipal} and $\tilde e_2$ is tangent to $E^s_Y$. 
\end{clai}
\begin{proof}
Choose $\tilde e_2$ as being a unit vector tangent to the intersection of $E^s_Y$ 
with the tangent plane of the fibers of 
$\Si$. It remains to choose $\tilde e_1$ 
tranverse to $e_2$ and tangent to the fibers of $\Si$ and tangent to $\col(X,Y)$ at every point of $\col(X,Y)$. 
\end{proof}

Up to change $\cB$ by the basis $\tilde\cB$ given by the claim above, we will now assume that $e_2$ is 
tangent to $E^s_Y$.  

\begin{clai} The vector field $X$ is tangent to $E^s_Y$.
\end{clai}
\begin{proof}
The flow of the vector field $X$ leaves invariant the periodic orbit $\gamma_t$ of $Y$ and $X$ commutes with $Y$. 
As a consequence, it preserves the stable manifold $W^s(\gamma_t)$ for every $t$.  
This implies that $X$ is tangent to the foliation 
$\cF^s_Y$ and therefore to $E^s_Y$. 
\end{proof}

Therefore, for every $x\in U\setminus \zero(X)$ the vector $\cX(x)\in\SS^2$ 
(see the notations in Equations~\ref{e.xnabase} and~\ref{e.xnaesfera}) belongs to the circle 
$\{x_1=0\}$. In particular, for any essential torus $T$ the map $\cX|_T\colon T\to \SS^2$ is not surjective, 
an thus has zero topological degree. This proves that $\operatorname{Ind}(X, U)$ vanishes, contradicting 
the fact that 
$(U,X,Y,\Si,\cB)$ is assumed to be a prepared counter example to Theorem~\ref{teoprincipal}.
\end{proof}

\section{Holonomies, return time, and the normal component}\label{Coordinates}
In the whole section, $(U,X,Y,\Si,\cB)$ is a prepared counter example to Theorem~\ref{teoprincipal}.

\paragraph{Definitions}
Recall that $\Sigma_{t}=\Si^{-1}(t)$, 
$t\in \RR/\ZZ$, is a
family of cross section, each $\Sigma_t$ is diffeomorphic to a disc, 
and we identify $\Sigma_0$ with the unit disc $\DD^2$.

\begin{defi}\label{d.holonomy}
Consider $t\in \RR$.  Consider $x\in \Sigma_0$ and $y\in \Sigma_{t}$. We say that 
$y$ is \emph{the image by holonomy of 
$Y$ over the segment $[0,t]$}, and we denote $y=P_{t}(x)$, if there exists a continuous path 
$x_r\in U$, $r\in[0,t]$, so that 
$\Si(x_r)=r$, $x_0=x$, $x_t=y$, and for every $r\in[0,t]$ the point $x_r$ belongs to the $Y$-orbit of $x$.
\end{defi}

The holonomy map $P_{t}$ is well defined in a neighborhood of $\col(X,Y,U)\cap\Sigma_0$ and is a 
$C^1$ local diffeomorphism. 

If $t=1$ then $P_1$ is the first return map $\cP$ (defined before Remark~\ref{r.P}) of the flow of $Y$ on the cross section 
$\Sigma_0$. 

\begin{rema}
With the notation of Definition~\ref{d.holonomy}, there is a unique continuous function 
$\tau_x\colon[0,t]\to\RR$ so that $\tau_x(0)=0$ and $x_r=Y_{\tau_x(r)}(x)$ for every $r\in[0,t]$.  

We denote $\tau_{t}(x)= \tau_x(t)$ and we call it the \emph{transition time from $\Sigma_0$ to $\Sigma_{t}$}. 
The map $\tau_{t}\colon\Sigma_0\to\RR$ is a $C^1$ map and by definition one has
\begin{equation}
\label{e.holonmy-times}
 P_{t}(x)=Y_{\tau_{t}(x)}(x)
\end{equation}

We denote $\tau=\tau_{1}$ and we call it the first return time of $Y$ on $\Sigma_0$. 
\begin{rema}  In Definition~\ref{d.prepared} item~\ref{i.period} 
we defined $\tau(s)$ as the period of $\gamma_s$;  in the notation above, it coincides with $\tau(x_s)$ where
$x_s=\gamma_s\cap \Si_0$. 
\end{rema}

In this case, Equation~\ref{e.holonmy-times} takes the special form 
\begin{equation}\label{e.return-times}
 \cP(x)=Y_{\tau(x)}(x)
\end{equation}
\end{rema}

\subsection{The normal conponent of $X$}

\begin{defi}
For every $t$ and every $x\in \Sigma_t$ we define \emph{the normal component of $X$}, which we denote by $N(x)$,
the  projection of $X(x)$ on $T_x\Sigma_t$ parallel to $Y(x)$.
\end{defi} 

Thus $x\mapsto N(x)$ is a $C^1$-vector field tangent to the fibers of $\Si$
and which vanishes precisely on $\col(X,Y,U)$. 

Moreover, in the basis $\cB$, $N(x)=\alpha(x)e_1(x)+\beta(x)e_2(x)$ (see Equation~\ref{e.xnabase}), 
and we have the 
following formula
$$X(x)=N(x)+\mu(x)Y(x),$$
for every $x\in U$.

\paragraph{The first return map and the derivative of the first return time.}

The goal of this paragraph is the proof of Corollary~\ref{derivative.return.time} which claims the following formula
\begin{equation}
\label{e.formula}
-D\tau(x)N(x)=\mu((\cP(x)))-\mu(x)
\end{equation}
relating the derivative of the return time function $\tau:\Sigma_0\to(0,+\infty)$, the normal component $N$ and the first return map $\cP$, at every point $x\in\Sigma_0$. 
This formula will be crucial for transfering informations on the normal vector field $N$ to the first return map $\cP$.
At the end of this section, we shall use the angular variation of $N$ (see the precise formulation in Corollary~\ref{c.gira})
together with formula (\ref{e.formula}) to
prove that the derivative of $\cP$ at any fixed point is the identity. 
 
The geometrical idea for proving (\ref{e.formula}) 
is the following: \begin{itemize}
\item at one hand, if $x\in\Sigma_0$, one has that the difference between the vectors $D\cP(x)N(x)$ and $DY_{\tau(x)}(x)N(x)$ is parallel to $Y(\cP(x))$,
and the proportion is given exactly by $D\tau(x)N(x)$ (this is a classical fact, which we state precisely below). 
\item On the other hand, $X$ is equal to $N+\mu Y$  and is invariant under $DY_t$, for any $t$.\end{itemize}
One gets (\ref{e.formula}) by combining these two facts.

\begin{figure}[h]
\centering
\includegraphics[width=270pt,height=180pt]{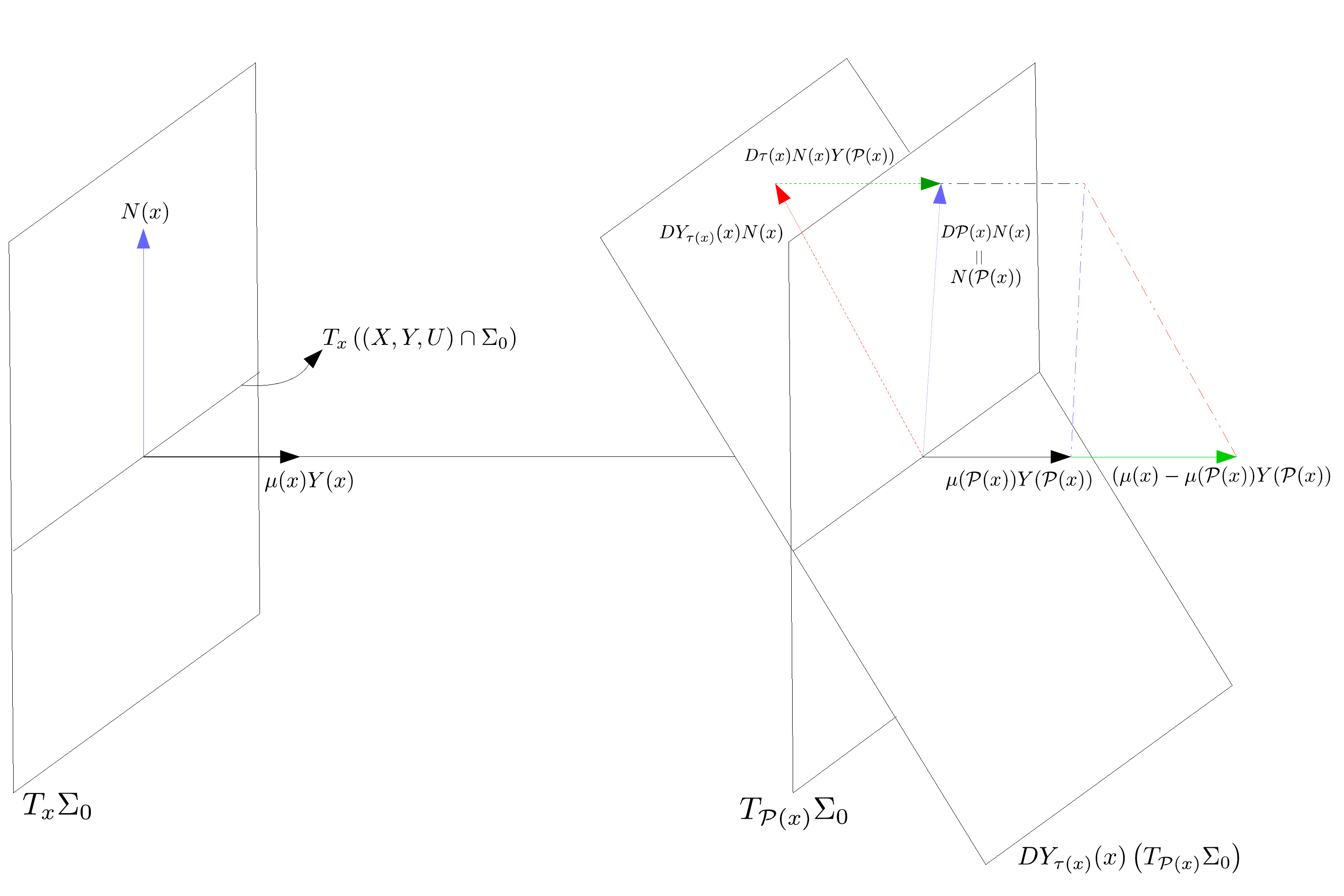}
\caption{
Geometric proof of formula (\ref{e.formula}):
\footnotesize
$$N(\cP(x))+\mu(\cP(x))Y(\cP(x))= X(\cP(x))=DY_{\tau(x)}(x)N(x)+\mu(x)Y(\cP(x))$$
$$\mbox{thus }\quad D\tau(x)N(x)Y(\cP(x))=(\mu(x)-\mu(\cP(x)))Y(\cP(x))$$
}
\label{f.formula}
\end{figure}

Let us proceed with the formal proof. The first step is the (classical) result below. The proof is an elementary and simple application of the flow box theorem, so we omit. 

\begin{lemm}
\label{l.diferenca.das.derivadas}
For every $t\in\RR$   $x\in\Sigma_0$ and every $v\in T_x\Sigma_0$ one has, 
\begin{equation}
DP_{t}(x)v-DY_{\tau_{t}(x)}(x)v=D\tau_{t}(x)v.Y(P_{t}(x)).
\end{equation}
In particular, for every $x\in\Sigma_0$ and $v\in T_x\Sigma_0$, one has
\begin{equation}
D\cP(x)v-DY_{\tau(x)}(x)v=D\tau(x)v.Y(\cP(x)).
\end{equation}
\end{lemm}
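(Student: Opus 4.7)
The plan is to derive the identity directly by differentiating the defining equation $P_t(x) = Y_{\tau_t(x)}(x)$ of the holonomy map, rather than passing through a flow box normal form. The flow-box picture is just an aid to intuition; once the relation $P_t(x) = Y_{\tau_t(x)}(x)$ (which is precisely Equation~\ref{e.holonmy-times}) is available, the lemma is essentially the chain rule.

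Fix $t\in\RR$ and $x\in\Sigma_0$. Choose any $C^1$ curve $\gamma\colon(-\delta,\delta)\to\Sigma_0$ with $\gamma(0)=x$ and $\gamma'(0)=v$. By Equation~\ref{e.holonmy-times} we have
\[
P_t(\gamma(s)) \;=\; Y_{\tau_t(\gamma(s))}(\gamma(s))
\]
for all $s$ sufficiently small. Differentiating both sides at $s=0$ and applying the chain rule to the right-hand side, regarded as the composition of $s\mapsto (\tau_t(\gamma(s)),\gamma(s))$ with the evaluation map $(a,y)\mapsto Y_a(y)$, one gets
\[
DP_t(x)v \;=\; \frac{\partial}{\partial a}Y_a(y)\Big|_{(a,y)=(\tau_t(x),x)}\cdot D\tau_t(x)v \;+\; \frac{\partial}{\partial y}Y_a(y)\Big|_{(a,y)=(\tau_t(x),x)}\cdot v.
\]
The first partial is $Y(Y_{\tau_t(x)}(x)) = Y(P_t(x))$ (by the very definition of the flow), and the second partial is $DY_{\tau_t(x)}(x)$. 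Substituting and rearranging gives
\[
DP_t(x)v - DY_{\tau_t(x)}(x)v \;=\; D\tau_t(x)v\cdot Y(P_t(x)),
\]
which is the first identity. The second identity is the special case $t=1$, using $\cP=P_1$ and $\tau=\tau_1$.

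I do not anticipate a genuine obstacle: the only subtlety is to recognize that $\tau_t$ is $C^1$, so that the chain rule applies; this is already built into the construction of $\tau_t$ (the flow box theorem gives a $C^1$ implicit function, hence both $P_t$ and $\tau_t$ are $C^1$ on a neighborhood of $\col(X,Y,U)\cap\Sigma_0$). The geometric meaning is exactly the picture in Figure~\ref{f.formula}: the vector $DP_t(x)v$ differs from the transported vector $DY_{\tau_t(x)}(x)v$ only by a multiple of $Y(P_t(x))$, because both lie in the $2$-plane $T_{P_t(x)}\Sigma_t \oplus \RR\,Y(P_t(x))$, and the coefficient of this correction is precisely the variation of the hitting time $\tau_t$ in the direction $v$.
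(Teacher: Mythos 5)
Your proof is correct. Note that the paper does not actually write out an argument for this lemma: it simply declares it an elementary consequence of the flow box theorem and omits the proof, the flow-box picture being the one in which $P_t$ becomes a coordinate projection of the flow and the identity is read off. What you do instead is differentiate the defining relation $P_t(x)=Y_{\tau_t(x)}(x)$ directly, viewing the right-hand side as the composition of $s\mapsto(\tau_t(\gamma(s)),\gamma(s))$ with the evaluation map $(a,y)\mapsto Y_a(y)$; this is equally valid and, if anything, more self-contained, since it makes the source of the correction term $D\tau_t(x)v\cdot Y(P_t(x))$ completely explicit (it is the $\partial_a$-derivative of the flow, namely $Y(Y_a(y))$). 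The two ingredients you rely on are genuinely needed and available: the map $(a,y)\mapsto Y_a(y)$ is jointly $C^1$ for a $C^1$ vector field (its partial derivatives $Y(Y_a(y))$ and $DY_a(y)$ are continuous in $(a,y)$), so the two-variable chain rule applies, and $\tau_t$ is $C^1$ near $\col(X,Y,U)\cap\Sigma_0$, which the paper records (and which, in turn, is where the flow box / implicit function argument is really used). The specialization $t=1$, $\cP=P_1$, $\tau=\tau_1$ then gives the second identity, exactly as you say.
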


Now, recall that 
$$X(x)=N(x)+\mu(x)Y(x)$$ and since 
$$DY_{\tau(x)}X(x)=X(Y_{\tau(x)})=X(\cP(x)),$$ 
we obtain
\begin{equation}
\label{e.equmcor}
X(\cP(x))=DY_{\tau(x)}(x)N(x)+\mu(x)Y(\cP(x)).
\end{equation}
On the other hand, by Lemma~\ref{l.diferenca.das.derivadas}
$$DY_{\tau(x)}(x)N(x)=D\cP(x)N(x)-D\tau(x)N(x)Y(\cP(x)).$$
As 
$$X(\cP(x))=N(\cP(x))+\mu(\cP(x))Y(\cP(x)),$$ 
we conclude that
\begin{eqnarray*}
D\cP(x)N(x)-D\tau(x)N(x))Y(\cP(x))+\mu(x)Y(\cP(x))\\
=N(\cP(x))+\mu(\cP(x))Y(\cP(x)),
\end{eqnarray*}
and so 
$$(\mu(x)-\mu(\cP(x))-D\tau(x)N(x)))Y(\cP(x))+D\cP(x)N(x)-N(\cP(x))=0.$$
Since $Y$ is transverse to the tangent space of $\Sigma_0$, the vectors $Y(\cP(x))$ and $D\cP(x)N(x)-N(\cP(x))$ are linearly independent. As a consequence, one obtains that 
$D\cP(x)N(x)=N(\cP(x))$ and

\begin{coro} 
\label{derivative.return.time}
Let $x\in\Sigma_0$. Then, $$-D\tau(x)N(x)=\mu((\cP(x)))-\mu(x).$$
\end{coro}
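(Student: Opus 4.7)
The plan is to extract the formula from the identity $X = N + \mu Y$ by applying the invariance of $X$ under $DY_t$ (which is a manifestation of the commutation $[X,Y]=0$, see item~(2) of Subsection~\ref{ss.commuting-first}) and then separating the component along $Y$ from the component tangent to $\Sigma_0$ using Lemma~\ref{l.diferenca.das.derivadas}.

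First, I would apply $DY_{\tau(x)}$ to both sides of $X(x) = N(x) + \mu(x) Y(x)$. Since $DY_t$ maps $Y$ to $Y$ along orbits of $Y$, and since $X$ is $Y$-flow invariant, this gives
\begin{equation*}
X(\cP(x)) = DY_{\tau(x)}(x) N(x) + \mu(x) Y(\cP(x)).
\end{equation*}
At the same time, by definition of $N$ and $\mu$ at the point $\cP(x)\in\Sigma_0$, we have
\begin{equation*}
X(\cP(x)) = N(\cP(x)) + \mu(\cP(x)) Y(\cP(x)).
\end{equation*}

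Next, I would use Lemma~\ref{l.diferenca.das.derivadas} with $v = N(x) \in T_x\Sigma_0$ to rewrite $DY_{\tau(x)}(x) N(x)$ as $D\cP(x) N(x) - D\tau(x) N(x)\cdot Y(\cP(x))$. Substituting this into the first equation and equating with the second yields
\begin{equation*}
D\cP(x) N(x) - N(\cP(x)) = \bigl(\mu(\cP(x)) - \mu(x) + D\tau(x) N(x)\bigr)\, Y(\cP(x)).
\end{equation*}

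Finally, since $Y(\cP(x))$ is transverse to $\Sigma_0$ while both $D\cP(x) N(x)$ and $N(\cP(x))$ lie in $T_{\cP(x)}\Sigma_0$, the left-hand side is tangent to $\Sigma_0$ and the right-hand side is a multiple of a vector transverse to $\Sigma_0$; both sides must therefore vanish. The vanishing of the coefficient of $Y(\cP(x))$ on the right gives precisely
\begin{equation*}
-D\tau(x) N(x) = \mu(\cP(x)) - \mu(x),
\end{equation*}
which is the desired formula (and as a by-product one recovers $D\cP(x) N(x) = N(\cP(x))$, reflecting the fact that $N$ commutes with the holonomy of $Y$). There is no real obstacle here: the computation is entirely algebraic once the correct ingredients (flow invariance of $X$, Lemma~\ref{l.diferenca.das.derivadas}, and transversality of $Y$ to $\Sigma_0$) are assembled.
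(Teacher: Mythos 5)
Your proof is correct and follows essentially the same route as the paper: apply $DY_{\tau(x)}$ to $X=N+\mu Y$ using the flow-invariance of $X$, substitute via Lemma~\ref{l.diferenca.das.derivadas}, and separate the $\Sigma_0$-tangent part from the $Y$-component by transversality, obtaining both the formula and $D\cP(x)N(x)=N(\cP(x))$ exactly as in the text.
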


Moreover, applying the same argument with the time $t$ holonomy $P_t$ in place of the first return map $\cP$, one obtain the invariance of $N$ under the holonomies.

\begin{lemm}
\label{l. u invariante}
For $t\in\RR$ and every $x\in\Sigma_0$ one has   
$$
DP_{t}(x)N(x)=N(P_{t}(x)).
$$
\end{lemm}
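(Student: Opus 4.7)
The plan is to mimic, essentially verbatim, the derivation that led to Corollary~\ref{derivative.return.time}, but now using the general holonomy $P_t$ in place of the first return map $\cP$. The ingredients are exactly the same:
\begin{itemize}
\item the decomposition $X(x)=N(x)+\mu(x)Y(x)$ valid on all of $U$ (coming from Equation~\ref{e.xnabase});
\item the fact that $X$ is invariant under $DY_s$ because $X$ and $Y$ commute;
\item Lemma~\ref{l.diferenca.das.derivadas}, which relates the derivative of the holonomy to the derivative of the flow by a correction term along $Y$;
\item the transversality of $Y$ to the fibers $\Sigma_t$.
\end{itemize}

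Concretely, first I would apply $DY_{\tau_t(x)}(x)$ to the identity $X(x)=N(x)+\mu(x)Y(x)$. Since $DY_{\tau_t(x)}(x)X(x)=X(P_t(x))$ (commutation) and $DY_{\tau_t(x)}(x)Y(x)=Y(P_t(x))$ (the flow preserves its own generator), this yields
$$DY_{\tau_t(x)}(x)N(x)+\mu(x)\,Y(P_t(x))=N(P_t(x))+\mu(P_t(x))\,Y(P_t(x)).$$
Next, I would substitute the expression for $DY_{\tau_t(x)}(x)N(x)$ given by Lemma~\ref{l.diferenca.das.derivadas}, namely
$$DY_{\tau_t(x)}(x)N(x)=DP_t(x)N(x)-D\tau_t(x)N(x)\cdot Y(P_t(x)),$$
and rearrange terms to obtain
$$DP_t(x)N(x)-N(P_t(x))=\bigl(\mu(P_t(x))-\mu(x)+D\tau_t(x)N(x)\bigr)Y(P_t(x)).$$

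Finally, I would observe that the left-hand side is a vector in $T_{P_t(x)}\Sigma_t$ (because $DP_t(x)$ sends $T_x\Sigma_0$ to $T_{P_t(x)}\Sigma_t$ and $N(P_t(x))$ itself lies in $T_{P_t(x)}\Sigma_t$), while the right-hand side is a scalar multiple of $Y(P_t(x))$, which is transverse to $T_{P_t(x)}\Sigma_t$. By linear independence both sides must vanish, which gives simultaneously the invariance
$$DP_t(x)N(x)=N(P_t(x))$$
and, as a by-product, the generalization $-D\tau_t(x)N(x)=\mu(P_t(x))-\mu(x)$ of the formula of Corollary~\ref{derivative.return.time} to arbitrary $t$. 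There is no real obstacle here: the argument is a direct transcription of the proof of Corollary~\ref{derivative.return.time}, and the only mild point to watch is that $\mu$ is a priori defined on $\col(X,Y)$ but is genuinely a $C^1$ function on all of $U$ (it is the $e_3$-coordinate of $X$ in the basis $\cB$), so that the identity $X=N+\mu Y$ makes sense globally.
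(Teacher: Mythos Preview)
Your proposal is correct and is exactly the argument the paper intends: the paper proves the case $t=1$ (i.e., $\cP$) in detail and then simply states that ``applying the same argument with the time $t$ holonomy $P_t$ in place of the first return map $\cP$, one obtains the invariance of $N$ under the holonomies.'' Your write-up is in fact slightly cleaner than the paper's in the final step, since you correctly phrase the conclusion as a decomposition into a $T\Sigma_t$-component and a $Y$-component rather than just invoking linear independence.
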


\paragraph{The normal component of $X$ and the index of $X$}

Let  $(U,X,Y,\Si,\cB)$ be a prepared counter example to Theorem~\ref{teoprincipal} and $N$ be
the normal component of $X$. 

Let us define, for $x\in U\setminus \col{(X,Y,U)}$, 
$$\cN(x)=\frac{1}{\sqrt{\alpha(x)^2+\beta(x)^2}}\left(\alpha(x),\beta(x)\right)\in\SS^1\subset \SS^2,$$
where $\SS^1$ is the unit circle of the plane $\RR^2\times\{0\}\subset \RR^3$. 

Recall that $U$ is homeomorphic to the solid torus so that its first homology group $H_1(U,\ZZ)$ is
isomorphic to $\ZZ$, by an isomorphism
sending the class of $\gamma_0$, oriented by $Y$, on $1$.
Let $U^+$ and $U^-$ be the two connected components of $U\setminus\col{(X,Y,U)}$. These are also solid tori, 
and the inclusion in $U$ induces isomorphisms of the first homology groups which
allows us to identify $H_1(U^\pm,\ZZ)$ with $\ZZ$.

\begin{defi}  
We call \emph{linking number} of $X$ with respect to $Y$ in $U^+$ (resp. in $U^-$) and we denote it by 
$\ell^+(X,Y)$ (resp. $\ell^-(X,Y)$) the integer defined as follows: 
the continuous map $\cN\colon U^\pm\to \SS^1$ induces morphisms on the homology groups 
$H_1(U^\pm,\ZZ)\to H_1(\SS^1,\ZZ)$. As these groups are all identified with $\ZZ$, these morphisms consist in 
the multiplication by an integer $\ell^\pm(X,Y)$. 
\end{defi}

In other words, consider a closed curve $\sigma\subset U^+$ homotopic in $U$ to $\gamma_0$.  Then $\ell^+(X,Y)$ is 
the topological degree of the restriction of $\cN$ to $\sigma$. 

\begin{prop}\label{p.link}Let  $(U,X,Y,\Si,\cB)$ be a prepared counter example to 
Theorem~\ref{teoprincipal}.
Then
 $$|\operatorname{Ind}(X, U)|=|\ell^+(X,Y)-\ell^-(X,Y)|$$
\end{prop}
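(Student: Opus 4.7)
The plan is to compute $\operatorname{Ind}(X,U)$ by evaluating $\cX$ on a carefully chosen essential torus $T$, and then apply Corollary~\ref{c.torus} after identifying its two ``projection'' maps $\varphi_\pm$ with the restrictions of $\cN$ to two generators of $H_1(U^\pm,\ZZ)$.

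First, I fix $\delta\in(0,\mu_0)$ small and take a compact tubular neighborhood $V$ in $U$ of the subannulus $\varphi(\RR/\ZZ\times[-\delta,\delta])$, chosen with smooth boundary $T=\partial V$ meeting $S=\col(X,Y,U)$ transversely exactly along $\gamma_\delta\cup\gamma_{-\delta}$. Then $V$ is a solid torus containing $\gamma_0=\zero(X)$ in its interior and $T$ is an essential torus disjoint from $\gamma_0$; since $X$ is non-vanishing on $T$, Lemma~\ref{l.Using the tubular neighboorhod to calculate the index} applied to $(V,\cB)$ yields, for an appropriate orientation of $T$,
$$\operatorname{Ind}(X,U)=\operatorname{Ind}(X,V)=\deg(\cX|_T).$$
I next fix a homeomorphism $\Theta\colon\TT^2=\RR/\ZZ\times\RR/\ZZ\to T$ with $\Theta(\{0\}\times\RR/\ZZ)=\gamma_\delta$ and $\Theta(\{1/2\}\times\RR/\ZZ)=\gamma_{-\delta}$, arranged so that on each of these two circles the second coordinate is oriented by $Y$. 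Then $\Theta(\{1/4\}\times\RR/\ZZ)\subset T\cap U^+$ and $\Theta(\{3/4\}\times\RR/\ZZ)\subset T\cap U^-$, and each is a core circle of the corresponding annulus $T\cap U^\pm$, hence a generator of $H_1(U^\pm,\ZZ)$.

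Now set $\Phi=\cX\circ\Theta\colon\TT^2\to\SS^2$. Along $\gamma_{\pm\delta}$ one has $\alpha\equiv\beta\equiv 0$ while $\mu\equiv\pm\delta\ne 0$, so $\cX$ maps $\gamma_\delta$ to $N$ and $\gamma_{-\delta}$ to $S$; on $T\setminus(\gamma_\delta\cup\gamma_{-\delta})\subset U\setminus S$ one has $(\alpha,\beta)\ne(0,0)$, which forces $\cX$ to avoid the poles. Thus $\Phi^{-1}(N)=\{0\}\times\RR/\ZZ$ and $\Phi^{-1}(S)=\{1/2\}\times\RR/\ZZ$. Moreover, projecting $\cX(p)=(\alpha,\beta,\mu)/\sqrt{\alpha^2+\beta^2+\mu^2}$ onto the equator along the meridians of $\SS^2$ yields exactly $(\alpha,\beta,0)/\sqrt{\alpha^2+\beta^2}=\cN(p)$, so the auxiliary maps $\varphi_\pm$ of Corollary~\ref{c.torus} coincide with the restrictions of $\cN\circ\Theta$ to $\{1/4\}\times\RR/\ZZ$ and $\{3/4\}\times\RR/\ZZ$. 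By definition of the linking numbers $\ell^\pm(X,Y)$ their degrees are $\pm\ell^+(X,Y)$ and $\pm\ell^-(X,Y)$, the signs being determined and matched by the orientation conventions chosen for $\Theta$.

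Combining these ingredients with Corollary~\ref{c.torus} gives
$$|\operatorname{Ind}(X,U)|=|\deg(\Phi)|=|\deg(\varphi_+)-\deg(\varphi_-)|=|\ell^+(X,Y)-\ell^-(X,Y)|.$$
I expect the main delicate point to be the geometric construction of the pair $(T,\Theta)$: one must produce a smooth essential torus transverse to $S$ meeting it precisely along $\gamma_\delta\cup\gamma_{-\delta}$, together with a parameterization $\Theta$ whose two ``polar'' circles are oriented coherently with $Y$, so that the two linking numbers enter the formula with matching signs. Once this setup is in place the identification of $\varphi_\pm$ with $\cN$ is automatic and the proposition follows directly from Corollary~\ref{c.torus}.
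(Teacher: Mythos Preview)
Your proof is correct and follows essentially the same approach as the paper's: build an essential torus $T$ meeting $\col(X,Y,U)$ transversely along two curves, observe that $\cX$ sends these to the poles $N,S$ and that $\cN$ is the meridian projection of $\cX$, then invoke Corollary~\ref{c.torus}. Your version is in fact slightly more explicit---you pin down the intersection curves as $\gamma_{\pm\delta}$ and verify $\mu\equiv\pm\delta$ there---whereas the paper simply takes a tubular neighborhood of $\gamma_0$ and labels the two intersection curves $\sigma_\pm$ without further comment; but the argument is the same.
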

\begin{proof}
 Consider a tubular neighborhood of $\gamma_0$ whose boundary  is an essential torus $T$ which cuts $\col{(X,Y,U)}$ 
 transversely and along exactly two
 curves $\sigma_+$ and $\sigma_-$.  Then the map $\cX$ on  $T$ takes the value $N\in\SS^2$ 
 (resp. $S\in\SS^2$) exactly  on $\sigma_+$ (resp. $\sigma_-$), where $N$ and 
 $S$ are the points on $\SS^2$ corresponding to 
 $e_3=Y$ and $-e_3$. 
 
 We identify $T$ with $\TT^2=\RR/\ZZ\times \RR/\ZZ$ so that $\sigma_-$ and $\sigma_+$ correspond to 
 $\{\frac 12\}\times\RR/\ZZ$ and $\{0\}\times \RR/\ZZ$ respectively. It remains to apply Corollary~\ref{c.torus}
 to $\Phi=\cX$ and $\varphi=\cN$, noticing that $\cN$ is the projection of $\cX$ on $\SS^1$ 
 along the meridians. 
 This gives the announced formula. 
\end{proof}

As a direct consequence of Proposition~\ref{p.link} one gets
\begin{coro}
\label{c.link} 
Let  $(U,X,Y,\Si,\cB)$ be a prepared counter example to Theorem~\ref{teoprincipal}, then
$$(\ell^+(X,Y),\ell^-(X,Y))\neq (0,0)$$ 
\end{coro}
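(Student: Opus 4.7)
The plan is to argue by contradiction and simply chain together the definition of a prepared counter example with the formula of Proposition~\ref{p.link}; this is why the authors call it a \emph{direct consequence}, and I do not expect any real obstacle.

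First I would recall that, by the very definition of a prepared counter example, $(U,X,Y)$ is a counter example to Theorem~\ref{teoprincipal}, and in particular satisfies $\operatorname{Ind}(X,U)\neq 0$. Hence $|\operatorname{Ind}(X,U)|$ is a strictly positive integer.

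Next I would invoke Proposition~\ref{p.link}, which is stated for an arbitrary prepared counter example, to write the identity
\[
|\operatorname{Ind}(X,U)|=|\ell^{+}(X,Y)-\ell^{-}(X,Y)|.
\]
Combining with the previous step, the right-hand side is strictly positive, so in particular the integers $\ell^{+}(X,Y)$ and $\ell^{-}(X,Y)$ cannot both vanish. This gives $(\ell^{+}(X,Y),\ell^{-}(X,Y))\neq(0,0)$, as required.

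There is no technical obstacle here: the entire content of the corollary is packaged inside Proposition~\ref{p.link} together with the defining property $\operatorname{Ind}(X,U)\neq 0$ of a (prepared) counter example. Thus the proof is essentially a one-line deduction. The substantial work lies upstream, namely in the proof of Proposition~\ref{p.link} via the torus-to-sphere degree computation given by Corollary~\ref{c.torus}.
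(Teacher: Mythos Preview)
Your proof is correct and matches the paper's intended argument exactly: the authors do not even write out a proof, stating only that the corollary is a direct consequence of Proposition~\ref{p.link}, which is precisely the deduction you give.
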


\subsection{Angular variation of the normal component}

\paragraph{Definition of the angular variation}
Recall that, for any $x\in U\setminus \col(X,Y)$, we defined $\cN(x)\in \SS^1$ as the expression in the basis $\cB(x)$ 
of the renormalization of the normal component $N(x)$. 

Consider a path $\eta\colon [0,1]\to U\setminus \col(X,Y)$.  We define the \emph{angular variation of $\cN$ along $\eta$} is as being
$$\tilde \cN(\eta(1))-\tilde\cN(\eta(0))$$ 
where $\tilde\cN(\eta(t))$ is a lift of the path $t\mapsto\cN(\eta(t)\in \SS^1$ on the universal cover 
$\RR\mapsto \SS^1\simeq \RR/2\pi\ZZ$. 
 
By a practical abuse of language, if there is no ambiguity on  the basis $\cB$, we will call \emph{angular variation of $N$ along $\eta$} the angular variation of $\cN$ along $\eta$. 

\paragraph{Angular variation of the normal component $N$ along the $Y$-orbits}\label{ss.angular}
We denote $\{x_t\}=\gamma_t\cap\Sigma_0$. For every pair of points $x,y\in\Sigma_0$, 
we denote the segment of straight line joinning x and y and contained in $\Sigma_0$ by
$[x,y]$  (for some choice of coordinates on $\Sigma_0$).

\begin{lemm}
\label{l.naogira} For any $K>0$ there is a neighborhood $V_K$ of 
$\gamma_0$ with the following  property. 

Consider $x\in V_K\cap \Sigma_0$ and $u\in T_x \Sigma_0$ a unit vector, and write $u=u_1 e_1(x)+u_2 e_2(x)$. 
Consider $t\in[0,K]$ and $v=DP_{t}(u)\in T_{P_{t}(x)}\Sigma_t$ the image of $u$ 
by the derivative of the holonomy. Write $v= v_1 e_1(P_{t}(x))+v_2 e_2(P_{t}(x))$. 

Then 
$$
\left(\frac{u_1}{\sqrt{u_1^2+u_2^2}},\frac{u_2}{\sqrt{u_1^2+u_2^2}}\right)
\neq -\left(\frac{v_1}{\sqrt{v_1^2+v_2^2}},\frac{v_2}{\sqrt{v_1^2+v_2^2}}\right) 
$$
\end{lemm}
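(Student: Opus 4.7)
The plan is to first establish the non-antipodality property at the single point $x_0=\gamma_0\cap\Si_0$ for every $t\in[0,K]$, and then extend by joint continuity together with the compactness of $[0,K]\times\SS^1$. Since each periodic orbit $\gamma_s$ cuts each disc $\Si_t$ in exactly one point (Definition~\ref{d.prepared}, item (\ref{i.period})), $\gamma_0\cap\Si_0$ reduces to one point $x_0$, and a routine compactness argument shows that $P_t$ is defined on a common neighbourhood $W_0\subset\Si_0$ of $x_0$ for every $t\in[0,K]$.

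The core step is to identify, at $x_0$, the matrix of $DP_t(x_0)\colon T_{x_0}\Si_0\to T_{P_t(x_0)}\Si_t$ in the bases $(e_1(x_0),e_2(x_0))$ and $(e_1(P_t(x_0)),e_2(P_t(x_0)))$ as upper triangular with positive diagonal. The upper-triangular form is a consequence of item (\ref{i.basis}) of Definition~\ref{d.prepared} combined with the $Y$-invariance of $\col(X,Y,U)$ (Subsection~\ref{ss.commuting-first}, item~4): $e_1$ is tangent to $\col(X,Y)$, and $P_t$ sends $\col(X,Y)\cap\Si_0$ into $\col(X,Y)\cap\Si_t$, so $DP_t(x_0)$ sends $\RR e_1(x_0)$ onto $\RR e_1(P_t(x_0))$. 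Writing
$$\mathrm{Mat}_{\cB}\bigl(DP_t(x_0)\bigr)=\begin{pmatrix}\lambda(t) & b(t) \\ 0 & d(t)\end{pmatrix},$$
the coefficients $\lambda(t),d(t)$ are continuous in $t$ (by joint continuity of $(x,t)\mapsto DP_t(x)$ and continuity of $\cB$), equal $1$ at $t=0$ (since $P_0=\mathrm{id}$), and are nowhere zero: $\lambda(t)\neq 0$ because $DP_t(x_0)e_1(x_0)$ is a nonzero multiple of $e_1(P_t(x_0))$, and then $d(t)\neq 0$ by invertibility of $DP_t(x_0)$. Hence $\lambda(t)>0$ and $d(t)>0$ for every $t\in[0,K]$.

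With this structure in place, a short linear algebra check rules out antipodality at $x_0$. Suppose $u=u_1e_1(x_0)+u_2e_2(x_0)$ is nonzero, $v=DP_t(x_0)(u)$ has coordinates $(v_1,v_2)=(\lambda(t)u_1+b(t)u_2,\,d(t)u_2)$, and $(v_1,v_2)=-c(u_1,u_2)$ for some $c>0$. The second coordinate gives $d(t)u_2=-cu_2$: if $u_2\neq 0$ then $d(t)=-c<0$, contradiction; otherwise $u_2=0$, $u_1\neq 0$, and the first coordinate forces $\lambda(t)=-c<0$, again contradicting the previous paragraph. So at $x_0$ no direction is reversed by $DP_t(x_0)$ for any $t\in[0,K]$.

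To conclude, introduce the continuous function
$$F\colon W_0\times[0,K]\times\SS^1\to\RR, \qquad F(x,t,\theta)=\frac{u_1v_1+u_2v_2}{\sqrt{u_1^2+u_2^2}\,\sqrt{v_1^2+v_2^2}},$$
where $u=\cos\theta\,e_1(x)+\sin\theta\,e_2(x)$ and $(v_1,v_2)$ denote the coordinates of $DP_t(x)(u)$ in $(e_1(P_t(x)),e_2(P_t(x)))$; this $F$ is the cosine of the angle between the two directions compared in the statement. The previous paragraph gives $F(x_0,t,\theta)>-1$ on the compact set $[0,K]\times\SS^1$, hence $\min_{[0,K]\times\SS^1}F(x_0,\cdot,\cdot)>-1$. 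Continuity of $F$ in $x$ yields a neighbourhood $W'_0\subset W_0$ of $x_0$ on which $F(x,t,\theta)>-1$ for every $(t,\theta)$, and one takes $V_K$ to be any neighbourhood of $\gamma_0$ in $U$ with $V_K\cap\Si_0\subset W'_0$. The main obstacle is the positivity of $\lambda(t)$ and $d(t)$ uniformly in $t\in[0,K]$, which relies on the clean identification of the upper-triangular structure at $x_0$; once that is established, the remainder is elementary and compactness-based.
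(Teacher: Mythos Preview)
Your proof is correct and follows essentially the same approach as the paper: both reduce to showing that the matrix of $DP_t(x_0)$ in the basis $\cB$ cannot send any direction to its opposite, which comes down to the upper-triangular structure (since $e_1$ is tangent to $\col(X,Y)$) together with positivity of the diagonal entries; you then extend by compactness/continuity where the paper argues by contradiction with convergent subsequences. The only minor difference is that you obtain $d(t)>0$ via continuity from the identity at $t=0$, whereas the paper invokes orientation-preservation of $DP_t(x_0)$---both arguments are valid and yield the same conclusion.
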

\begin{proof}
Assuming, by contradiction, that the conclusion does not hold
we get $y_n\in\Sigma_0$, unit vectors $u_n\in T_{x_n}\Sigma_0$ and $t_n\in[0,K]$ so that $y_n$ 
tends to $x_0=\gamma_0\cap \Sigma_0$, 
$u_n$ tends to a 
unit vector $u$ in $T_{x_0}\Sigma_0$,
$t_n$ tends
to $t\in[0,K]$ and the image $v_n$ of the vector $u_n$, expressed in the basis 
$\cB$, is collinear to $u_n$ with the opposite direction.

Then $DP_{t}(x_0)u$ is a vector that, 
expressed  in the basis $\cB$, is collinear to $u$ with the opposit direction. 
In other words, $u$ is an eigenvector of $DP_{t}(x_0)$, with a negative eigenvalue. 

However, for every $t\in\RR$  the vector $e_1$ is an eigenvector of $DP_{t}(x_0)$, 
with a positive eigenvalue, and $DP_{t}(x_0)$ preserves the orientation, leading to a contradiction. 
\end{proof}


\begin{lemm}
\label{l.segment} 
For every $N\in\N$ there exists a neighborhood $O_N\subset \Sigma_0$ of $x_0$
so that if $x\in O_N\setminus\col(X,Y,U)$
then the segment of straight line $[x,\cP^N(x)]$ is disjoint from $\col(X,Y,U)$. 
\end{lemm}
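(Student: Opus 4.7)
The plan is to argue by a local Taylor expansion of $\cP^N$ around $x_0$ in well-chosen coordinates on $\Sigma_0$. Set $\Gamma := \col(X,Y,U)\cap \Sigma_0$; since each periodic orbit $\gamma_s$, $s\in[-\mu_0,\mu_0]$, meets $\Sigma_0$ transversely at a single point by item~\ref{i.period} of Definition~\ref{d.prepared}, the set $\Gamma$ is a $C^1$ curve through $x_0$, tangent there to $e_1(x_0)$, and pointwise fixed by $\cP$ (because $\Gamma \subset \zero(X - \mu Y)\cap\Sigma_0$ and $Y$ is the flow direction of $\cP$). Fix $\RR^2$-coordinates on $\Sigma_0$ with $x_0 = (0,0)$ and, after a linear change (which preserves straight line segments), assume $T_{x_0}\Gamma$ is the $u$-axis. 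The implicit function theorem represents $\Gamma$ locally as a graph $\{v = f(u)\}$ with $f(0) = f'(0) = 0$. Since $\cP$ fixes $\Gamma$ pointwise and preserves orientation (Remark~\ref{r.P}), the matrix $D\cP(x_0)$ admits $e_1$ as eigenvector for the eigenvalue $1$, and its transverse eigenvalue equals $\det D\cP(x_0) =: \lambda > 0$.

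Writing $w = v - f(u)$ for the signed vertical distance to $\Gamma$, uniform continuity of the continuous map $\partial_v \cP^N$ on a compact neighborhood of $x_0$, together with $\cP^N(u, f(u)) = (u, f(u))$, yields the first-order expansion
\begin{equation*}
\cP^N(u, f(u) + w) = (u, f(u)) + w\bigl(\alpha_N, \beta_N\bigr) + w\,\eta_N(u, w),
\end{equation*}
where $(\alpha_N, \beta_N) = \partial_v \cP^N(x_0)$, $\beta_N = \lambda^N > 0$, and $\eta_N(u,w)\to 0$ as $(u,w)\to(0,0)$. Next I would parametrize the segment $[x,\cP^N(x)]$ by $t \in [0,1]$ and compute the signed vertical distance to $\Gamma$ at each of its points. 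The mean value theorem applied to $f$, combined with the flatness $f'(0)=0$, gives
\begin{equation*}
v(t) - f(u(t)) = w\bigl(1 + t(\lambda^N - 1) + \rho(u,w,t)\bigr),
\end{equation*}
where $\rho(u,w,t)\to 0$ uniformly in $t\in[0,1]$ as $(u,w)\to (0,0)$. Since $t\mapsto 1 + t(\lambda^N - 1)$ is bounded below by $\min(1,\lambda^N) > 0$ on $[0,1]$, choosing $O_N$ small enough the quantity $v(t) - f(u(t))$ has the same sign as $w \neq 0$ throughout the segment, so $[x,\cP^N(x)]$ stays on the same side of $\Gamma$ as $x$ and in particular does not meet $\Gamma$.

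The main technical point will be to make the error $\rho(u,w,t) = o(1)$ uniform in $t$: it combines an $o(w)$ contribution from the $C^1$-expansion of $\cP^N$ with a term of the form $f'(\xi_t)\cdot tw(\alpha_N + o(1))$ produced when replacing $f(u(t))$ by $f(u)$, and this last term is negligible precisely because $f'(\xi_t) \to 0$ as $(u,w)\to 0$, thanks to $f'(0) = 0$. Beyond this elementary but careful bookkeeping, the argument uses nothing more than the pointwise fixing of $\Gamma$ by $\cP$ and the positivity of the transverse eigenvalue $\lambda$.
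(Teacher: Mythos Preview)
Your argument is correct. You set up adapted coordinates in which $\Gamma$ is a graph $\{v=f(u)\}$ tangent to the $u$-axis, expand $\cP^N$ to first order in the transverse variable $w=v-f(u)$, and then verify that the signed height $v(t)-f(u(t))$ along the segment $[x,\cP^N(x)]$ keeps the sign of $w$ because the linear part $1+t(\lambda^N-1)$ is bounded below by $\min(1,\lambda^N)>0$. The only delicate point is the uniform smallness of the error $\rho(u,w,t)$ in $t$, and you handle both contributions correctly: the $C^1$-remainder of $\cP^N$ via uniform continuity of $\partial_v\cP^N$, and the term coming from $f(u(t))-f(u)$ via $f'(\xi_t)\to 0$. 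Nothing is missing.

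The paper argues differently: by contradiction. If some segment $[y_n,\cP^N(y_n)]$ met $\Gamma$ at a fixed point $z_n$, then $\cP^N$ would send the sub\-segment $[z_n,y_n]$ to a curve from $z_n$ to $\cP^N(y_n)$, which lies on the \emph{opposite} side of $z_n$ along the line; a mean-value argument then produces a point $w_n\in[z_n,y_n]$ where $D\cP^N$ maps the direction $u_n$ of the segment to a negative multiple of itself, and passing to the limit yields a negative eigenvalue for $D\cP^N(x_0)$, contradicting orientation preservation together with the eigenvalue $1$ along $\Gamma$.

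Both proofs ultimately rest on the same fact---the transverse eigenvalue of $D\cP^N(x_0)$ is positive---but exploit it in opposite styles. The paper's proof is shorter and coordinate-free, extracting the obstruction (a negative eigenvalue) directly from the crossing; your proof is a direct, quantitative computation showing the segment never leaves its side of~$\Gamma$. Your route has the minor advantage that it gives an explicit lower bound ($\min(1,\lambda^N)$, up to the $o(1)$ error) on how far the segment stays from $\Gamma$ relative to $|w|$, at the cost of some bookkeeping.
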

\begin{proof}Assume that there is a sequence of points $y_n\to x_0$, $y_n\notin \col(X,Y,U)$ 
so that the segment 
$[y_n,\cP^N(y_n)]$ intersect $\col(X,Y,U)\cap \Sigma_0$ at some point $z_n$. Recall that 
$\col(X,Y,U)\cap \Sigma_0$ consists in fixed points of the first return map $\cP$.  
In particular, $z_n$ is a fixed point of $\cP^N$. 

The image of the segment  $[z_n,y_n]$  is a $C^1$ curve joining $z_n$ to $\cP^N(y_n)$. 
Notice that the segments $[z_n,y_n]$ and $[z_n,\cP^N(y_n)]$ are contained 
in the segment $[y_n,\cP^N(y_n)]$ and oriented in opposite direction. 
One deduces that there is a point $w_n$ in $[y_n,z_n]$ so that the image under the derivative 
$D\cP^N$ of the unit vector $u_n$ directing this segment is on the form $\lambda_n u_n$ with $\lambda_n<0$. 

Since $y_n$ tends to $x_0$, one deduces that $D\cP^N(x_0)$ has a negative 
eigenvalue. This contradicts the fact that both eigenvalues of 
$D\cP^N(x_0)$ are positive and completes the proof. 
\end{proof}

Recall that $U^+$ and $U^-$ are the connected components of $U\setminus \col{(X,Y,U)}$. 

\begin{coro} If $x\in U^\pm\cap \Sigma_0\cap O_3$  let   $\theta_x\colon \RR/\ZZ\to U$ 
be the curve  obtained  
by concatenation of the $Y$-orbit segment from $x$ to $\cP^2(x)$ and the straight 
line segment $[\cP^2(x),x]$, that is:  
\begin{itemize}
 \item for $t\in[0,\frac 12]$, $\theta_x(t)= Y_{2t.\tau_{2}(x)}(x)$ where $\tau_{2}$ is 
 the transition time from $x$ to $\cP^2(x)$
 \item for $t\in[\frac 12,1]$, $\theta_x(t)=(2-2t)\cP^2(x) +(2t-1)x$.
\end{itemize}
Then $\theta_x$ is a closed curved contained in $U^\pm$ and whose homology class in $H_1(U^\pm,\ZZ)=\ZZ$ is 
$2$. 
\end{coro}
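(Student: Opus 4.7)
The plan is to verify the three assertions in turn: that $\theta_x$ is closed, that $\theta_x\subset U^\pm$, and that the homology class of $\theta_x$ in $H_1(U^\pm,\ZZ)=\ZZ$ equals $2$.

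Closedness is immediate from the piecewise construction, since the $Y$-orbit segment ends at $\cP^2(x)$, where the straight-line segment begins, and the straight-line segment ends at $x$, where the $Y$-orbit piece starts.

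For the inclusion $\theta_x\subset U^\pm$, I would argue as follows. The set $\col(X,Y,U)$ consists entirely of periodic orbits of $Y$ (the curves $\gamma_t$), hence is $Y$-invariant; therefore the $Y$-orbit of $x$ never meets $\col(X,Y,U)$. By definition of $\cP$ as the \emph{first} return to $\Sigma_0$, and for $x$ sufficiently close to $\gamma_0$ (which we may assume by shrinking $O_3$ if necessary), the orbit segment from $x$ to $\cP^2(x)$ remains inside $U$. Continuity together with the fact that $U^+$ and $U^-$ are the connected components of the open set $U\setminus\col(X,Y,U)$ then forces this segment to stay in the component containing $x$, namely $U^\pm$. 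For the closing straight-line piece $[\cP^2(x),x]$, the hypothesis $x\in O_3\subset O_2$ combined with Lemma~\ref{l.segment} applied with $N=2$ shows that this segment is disjoint from $\col(X,Y,U)$; being a segment in $\Sigma_0\subset U$ with both endpoints in $U^\pm$, it lies entirely in $U^\pm$.

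For the homology class, I would use the submersion $\Si\colon U\to\RR/\ZZ$. Since $U$ is a solid torus with core $\gamma_0$, the map $\Si_*$ sends the generator of $H_1(U,\ZZ)$ oriented by $Y$ to the positive generator of $H_1(\RR/\ZZ,\ZZ)$; identifying both groups with $\ZZ$ in this way, the class of any loop in $U$ equals the degree of its composition with $\Si$. Along the $Y$-orbit portion of $\theta_x$, item~(\ref{i.period}) of Definition~\ref{d.prepared} ensures that $\Si$ increases by exactly $1$ between consecutive returns to $\Sigma_0$, hence by $2$ in total from $x$ to $\cP^2(x)$. On the straight-line segment $[\cP^2(x),x]\subset\Sigma_0$ the function $\Si$ is constantly $0$. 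Summing gives $\deg(\Si\circ\theta_x)=2$. Since the inclusion $U^\pm\hookrightarrow U$ induces an isomorphism on $H_1$ (both are solid tori), this is also the class of $\theta_x$ in $H_1(U^\pm,\ZZ)=\ZZ$. I do not foresee any real obstacle; the only non-formal input is Lemma~\ref{l.segment}, which has already been established.
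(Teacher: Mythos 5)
Your proposal is correct and follows essentially the same route as the paper: the paper's own proof consists precisely in observing that the only real issue is that $\theta_x$ avoids $\col(X,Y,U)$, handled by the $Y$-invariance of the collinearity locus on the orbit piece and by Lemma~\ref{l.segment} on the straight segment, exactly as you do. Your explicit degree computation of $\Si\circ\theta_x$ (and the remark that the inclusion $U^\pm\hookrightarrow U$ is an $H_1$-isomorphism) just spells out what the paper treats as immediate, so no further comment is needed.
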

\begin{proof}The unique difficulty here is that the curve don't cross the colinearity locus
$\col{(X,Y,U)}$ and that is given by Lemma~\ref{l.segment}.
\end{proof}

The next corollary is one of the fundamental arguments of this paper. 

\begin{coro}\label{c.gira} 
Let  $(U,X,Y,\Si,\cB)$ be a prepared counter example to Theorem~\ref{teoprincipal}, and assume that 
$\ell^+(X,Y)\neq 0$. 

Consider $x\in U^+\cap \Sigma_0\cap O_3$.  Then the angular variation of the vector $\cN(y)$ for 
$y\in [x,\cP^2(x)]$ is strictly larger than $2\pi$ in absolute value. In particular, 
$$\cN([x,\cP^2(x)])=\SS^1.$$

The same statement holds in $U^-$ if $\ell^-(X,Y)\neq 0$. 
\end{coro}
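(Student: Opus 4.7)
The plan is to compute the total angular variation of $\cN$ along the closed curve $\theta_x$ constructed in the preceding corollary in two independent ways, and compare. Since $\theta_x\subset U^+$ represents $2$ times the generator of $H_1(U^+,\ZZ)\cong\ZZ$, the defining property of $\ell^+(X,Y)$ as the integer by which $\cN_*$ acts on $H_1$ implies that the composition $\cN\circ\theta_x\colon\SS^1\to\SS^1$ has topological degree $2\ell^+(X,Y)$. Hence the total angular variation of $\cN$ along $\theta_x$ equals $4\pi\,\ell^+(X,Y)$, whose absolute value is at least $4\pi$ because $\ell^+(X,Y)$ is a nonzero integer.

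Next I would split $\theta_x$ into its two constituent pieces: the $Y$-orbit arc $\alpha_x$ going from $x$ to $\cP^2(x)$, and the straight segment $\beta_x=[\cP^2(x),x]\subset\Sigma_0$. The heart of the argument is to show that the angular variation of $\cN$ along $\alpha_x$ is strictly less than $\pi$ in absolute value. By Lemma~\ref{l. u invariante} the holonomy derivatives preserve the normal vector field, $DP_s(x)N(x)=N(P_s(x))$. Choosing $K$ larger than twice the maximum of $\tau$ over $\col(X,Y,U)\cap\Sigma_0$, and shrinking $O_3$ if necessary so that it lies inside the neighborhood $V_K$ of Lemma~\ref{l.naogira}, I would apply that lemma to the unit vector $u=N(x)/\|N(x)\|$ at every intermediate time $s\in[0,\tau_2(x)]\subset[0,K]$: it tells us that the $\cB$-expression of $DP_s(x)u$, normalized in $\SS^1$, is never antipodal to the $\cB$-expression of $u$. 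But these normalized expressions are precisely $\cN(P_s(x))$ and $\cN(x)$. By continuity, a lift of $s\mapsto\cN(P_s(x))$ to the universal cover $\RR\to\SS^1$ starts at a lift of $\cN(x)$ and can never move to distance $\pi$ from it, so the angular variation along $\alpha_x$ stays strictly inside $(-\pi,\pi)$.

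Combining the two computations, the angular variation of $\cN$ along $\beta_x$ has absolute value at least $4\pi-\pi=3\pi>2\pi$, and the same bound holds along $[x,\cP^2(x)]$, which differs from $\beta_x$ only by orientation. An angular variation exceeding $2\pi$ forces the image $\cN([x,\cP^2(x)])$ to cover all of $\SS^1$, yielding the second claim. The argument for $U^-$ is word-for-word identical with $\ell^-$ in place of $\ell^+$. The only technical wrinkle, not a conceptual obstacle, is arranging a single shrinking of $O_3$ that simultaneously validates disjointness of $[x,\cP^2(x)]$ from $\col(X,Y,U)$ (Lemma~\ref{l.segment}), inclusion in $V_K$ (Lemma~\ref{l.naogira}), and a uniform upper bound on the return time of $\cP^2$; this is a routine compactness argument.
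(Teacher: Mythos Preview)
Your proof is correct and follows the same approach as the paper's: compute the degree of $\cN\circ\theta_x$ as $2\ell^+(X,Y)$, bound the angular variation along the $Y$-orbit arc by $\pi$ via Lemma~\ref{l.naogira}, and subtract. You are in fact more explicit than the paper in two places: you spell out that Lemma~\ref{l. u invariante} is what identifies the normalized $\cB$-expression of $DP_s(x)u$ with $\cN(P_s(x))$, and you note the need to shrink $O_3$ into $V_K$, which the paper's proof silently assumes. One minor notational slip: in Lemma~\ref{l.naogira} the parameter $t\in[0,K]$ is the holonomy parameter (the $\Si$-value), not the flow time, so the relevant range for $\cP^2$ is $s\in[0,2]$ rather than $s\in[0,\tau_2(x)]$; this does not affect the argument since the two parameterizations trace the same arc.
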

\begin{proof} Lemma~\ref{l.naogira} implies that the angular variation of $\cN(\theta_x(t))$ is contained
in $(-\pi,\pi)$  for $t\in[0,\frac 12]$. 
However, the topological degree of the map $\cN\colon \theta_x\to \SS^1$ is $2\ell^+(X,Y)$
which has absolute value 
at least $2$. Thus the angular variation of $\cN$ on the segment $[x,\cP^2(x)]$ 
is (in absolute value) at least $3\pi$ concluding. 
 \end{proof}

As an immediate consequence one gets
\begin{coro}\label{c.nofixed} If $\ell^+(X,Y)\neq 0$, then $\cP^2$ has no fixed points in 
$O_3\cap U^+$.
\end{coro}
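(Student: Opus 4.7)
The plan is to derive Corollary~\ref{c.nofixed} as an immediate consequence of the preceding Corollary~\ref{c.gira}, by a contradiction argument exploiting the degeneracy of the straight-line segment $[x,\cP^2(x)]$ when $\cP^2(x)=x$.

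More precisely, I would proceed as follows. Suppose for contradiction that $x\in O_3\cap U^+$ is a fixed point of $\cP^2$, i.e.\ $\cP^2(x)=x$. Then the straight-line segment $[x,\cP^2(x)]$ in $\Sigma_0$ is reduced to a single point, namely $\{x\}$. By definition, the angular variation of the vector $\cN$ along a degenerate (constant) path is $0$, since $\cN(x)$ is a single well-defined vector of $\SS^1$ (recall that $x\in U^+\subset U\setminus \col(X,Y,U)$, so $N(x)\neq 0$ and $\cN(x)$ makes sense).

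On the other hand, since $\ell^+(X,Y)\neq 0$ by hypothesis and $x\in U^+\cap\Sigma_0\cap O_3$, Corollary~\ref{c.gira} asserts that the angular variation of $\cN(y)$ for $y\in[x,\cP^2(x)]$ is strictly larger than $2\pi$ in absolute value. This directly contradicts the vanishing of the angular variation computed above, completing the proof.

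There is essentially no obstacle here: the whole content of the corollary has already been extracted in Corollary~\ref{c.gira}, and the present statement is just the logical extraction of the non-degeneracy consequence. The only mild point to note is that one should justify that $x\in U^+$ ensures $\cN(x)$ is well-defined, which is immediate from the fact that $N$ vanishes exactly on $\col(X,Y,U)$ and $U^+$ is by construction a connected component of $U\setminus\col(X,Y,U)$. Symmetrically, the same argument applies in $U^-$ whenever $\ell^-(X,Y)\neq 0$.
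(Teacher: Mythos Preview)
Your argument is correct and is precisely the approach the paper takes: the corollary is stated as an immediate consequence of Corollary~\ref{c.gira}, and the contradiction comes from the degeneracy of the segment $[x,\cP^2(x)]$ when $\cP^2(x)=x$.
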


\paragraph{The return map at points where $N$ is pointing in opposite directions.}

We have seen in the proof of  Corollary \ref{c.gira} that the vector $\cN$ has an angular variation
larger than $3\pi$ along the segment $[x,\cP^2(x)]$, as $x\in\Si_0$
approaches $x_0$. In this section we use the large angular variation of $\cN$  
for establishing  a relation between the return map $\cP$, the return time $\tau$, and the coordinate 
$\mu$ of $X$ in the $Y$ direction. 

Recall that, for every $x\in U\setminus \col(X,Y,U)$,  $\cN(x)$ is a unit vector contained in $\SS^1$, 
unit circle of $\RR^2$. 

\begin{lemm}
 \label{l.lemamatador}
Assume that there exists sequences 
$$q_n,\overline{q}_n\in\Si_0\setminus \col(X,Y,U)$$  
converging to $ x_0$, such that the following two 
conditions are satisfied:
\begin{enumerate}
 \item $\cN(q_n)\to (1,0)$ and 
 $\cN(\overline{q}_n)\to (-1,0)$, as $n\to+\infty$,
 \item $(\mu(\cP(q_n))-\mu(q_n))(\mu(\cP(\overline{q}_n))-\mu(\overline{q}_n))\geq 0$, for every $n$.
\end{enumerate}
Then, $D\tau(x_0)e_1(x_0)=0$.
\end{lemm}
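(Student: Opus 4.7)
The plan is to leverage the identity from Corollary~\ref{derivative.return.time}, namely
$$\mu(\cP(x))-\mu(x) = -D\tau(x)\,N(x),$$
by normalizing both sides with $\|N\|$ and passing to the limit along the two sequences. The assumption that $q_n, \overline{q}_n \to x_0$ makes $\|N(q_n)\|, \|N(\overline{q}_n)\| \to 0$ (since $x_0\in \col(X,Y,U)$, where $N$ vanishes), so the identity cannot be applied at $x_0$ directly; the idea is to extract the \emph{direction} of $N$, which is controlled by hypothesis~(1).

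First I would write $N(q_n) = \|N(q_n)\|\bigl(a_n e_1(q_n) + b_n e_2(q_n)\bigr)$ with $(a_n,b_n) = \cN(q_n)$, and similarly for $\overline{q}_n$. Dividing the identity by $\|N(q_n)\|>0$ and using the linearity of $D\tau(q_n)$ in the second slot yields
$$\frac{\mu(\cP(q_n))-\mu(q_n)}{\|N(q_n)\|} \;=\; -D\tau(q_n)\bigl(a_n e_1(q_n)+b_n e_2(q_n)\bigr).$$
Since $\tau$ is $C^1$, $e_1$ and $e_2$ are continuous, $q_n\to x_0$, and $(a_n,b_n)\to(1,0)$, the right-hand side converges to $-D\tau(x_0)\,e_1(x_0)$. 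The exact same computation at $\overline{q}_n$, using $\cN(\overline{q}_n)\to(-1,0)$, gives
$$\frac{\mu(\cP(\overline{q}_n))-\mu(\overline{q}_n)}{\|N(\overline{q}_n)\|} \;\longrightarrow\; +D\tau(x_0)\,e_1(x_0).$$

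Next I would multiply these two normalized quantities. Since $\|N(q_n)\|\,\|N(\overline{q}_n)\|>0$, the sign of their product equals the sign of $(\mu(\cP(q_n))-\mu(q_n))(\mu(\cP(\overline{q}_n))-\mu(\overline{q}_n))$, which by hypothesis~(2) is $\geq 0$ for every $n$. Passing to the limit therefore gives
$$\bigl(-D\tau(x_0)\,e_1(x_0)\bigr)\cdot\bigl(D\tau(x_0)\,e_1(x_0)\bigr) \;=\; -\bigl(D\tau(x_0)\,e_1(x_0)\bigr)^2 \;\geq\; 0,$$
and hence $D\tau(x_0)\,e_1(x_0)=0$, as claimed.

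The argument is essentially a one-line calculation once the correct normalization is chosen; there is no real obstacle. The only subtle point is to verify that one is allowed to take the limit: this uses the $C^1$ regularity of $\tau$ (which gives uniform continuity of $D\tau$ on a neighborhood of $x_0$ in $\Sigma_0$) and the continuity of the basis $\cB$, both of which are built into the definition of a prepared counter example.
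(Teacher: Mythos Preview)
Your proof is correct and follows essentially the same approach as the paper: normalize the identity $\mu(\cP(x))-\mu(x)=-D\tau(x)N(x)$ by $\sqrt{\alpha^2+\beta^2}$, use hypothesis~(1) and the continuity of $D\tau$ and $\cB$ to pass to the limit, then multiply and invoke hypothesis~(2) to trap $-(D\tau(x_0)e_1(x_0))^2\geq 0$. The only cosmetic difference is that the paper multiplies the two normalized identities first and then takes the limit, whereas you take each limit first and then multiply; also note that $\|N\|$ coincides with $\sqrt{\alpha^2+\beta^2}$ only where $(e_1,e_2)$ is orthonormal, so it is cleaner to normalize by $\sqrt{\alpha^2+\beta^2}$ directly, as the paper does.
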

\begin{proof}
Recall that $N(x)=\alpha(x)e_1(x)+\beta(x)e_2(x)$ and 
$$\cN(x)=\frac1{\sqrt{\alpha(x)^2+\beta(x)^2}}(\alpha(x),\beta(x))$$
for $x\in U\setminus\col(X,Y,U)$. 

By Corollary \ref{derivative.return.time}, we have 
\begin{equation}
\label{absurdo.chegando}
-D\tau(q_n)\frac{N(q_n)}{\sqrt{\alpha(q_n)^2+\beta(q_n)^2}}=\frac{\mu(\cP(q_n))-\mu(q_n)}{\sqrt{\alpha(q_n)^2+\beta(q_n)^2}},
\end{equation} 

and

\begin{equation}
\label{absurdo.quase.chegando}
-D\tau(\overline{q}_n)\frac{N(\overline{q}_n)}{\sqrt{\alpha(\overline{q}_n)^2+\beta(\overline{q}_n)^2}}=
\frac{\mu(\cP(\overline{q}_n))-\mu(\overline{q}_n)}{\sqrt{\alpha(\overline{q}_n)^2+\beta(\overline{q}_n)^2}}.
\end{equation} 

Multiplying side by side Equations \ref{absurdo.chegando} and \ref{absurdo.quase.chegando} 
and using the second assumption of the lemma, we get 

$$D\tau(q_n)\frac{N(q_n)}{\sqrt{\alpha(q_n)^2+\beta(q_n)^2}}D\tau(\overline{q}_n)\frac{N(\overline{q}_n)}
{\sqrt{\alpha(\overline{q}_n)^2+\beta(\overline{q}_n)^2}}\geq 0.$$ 

Notice that the first assumption of the lemma is equivalent to
$$\frac{N(q_n)}{\sqrt{\alpha(q_n)^2+\beta(q_n)^2}}\to e_1(x_0)$$
and 
$$\frac{N(\overline{q}_n)}{\sqrt{\alpha(\overline{q}_n)^2+\beta(\overline{q}_n)^2}}\to- e_1(x_0).$$
Since $q_n,\overline{q}_n\to x_0$, from the continuity of $D\tau$, we conclude 
that 
$$0\geq -\left(D\tau(x_0)(e_1(x_0))\right)^2\geq 0,$$ which completes the proof.
\end{proof}

Since we assumed $(D\tau(x_0) (e_1(x_0)) \neq 0$
(item~(\ref{i.period}) of the definition of a prepared counter example to 
Theorem~\ref{teoprincipal}), one gets the following corollary: 

\begin{coro} \label{c.lemamatador}
Let $(U,X,Y,\Si,\cB)$ is a prepared counter example to Theorem~\ref{teoprincipal}.
Assume that there exists sequences $q_n,\overline{q}_n\in \Si_0\setminus\col(X,Y,U)$ converging to  $x_0$, 
such that 
$\cN(q_n)\to (1,0)$ and 
 $\cN(\bar q_n)\to(-1,0)$, as $n$ tends to $+\infty$.
Then
 
 $$(\mu(\cP(q_n))-\mu(q_n))(\mu(\cP(\overline{q}_n))-\mu(\overline{q}_n)) <0, $$
 for every $n$ large enough.
\end{coro}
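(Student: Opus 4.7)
The plan is to reprove the key identity from Lemma~\ref{l.lemamatador} but extract, in the limit, a strict inequality instead of a non-strict one. Concretely, I will not invoke the lemma by contraposition (which would only yield the existence of some $n$ with the product negative); instead I will rerun the same computation and take a limit that is strictly negative, so that every sufficiently large $n$ works.

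Starting from Corollary~\ref{derivative.return.time}, I write, for each $n$,
\begin{equation*}
\frac{\mu(\cP(q_n))-\mu(q_n)}{\sqrt{\alpha(q_n)^2+\beta(q_n)^2}}
= -D\tau(q_n)\!\left(\frac{N(q_n)}{\sqrt{\alpha(q_n)^2+\beta(q_n)^2}}\right),
\end{equation*}
and the analogous identity for $\overline{q}_n$. The denominators are strictly positive since $q_n,\overline{q}_n\notin\col(X,Y,U)$. Multiplying the two identities side by side yields
\begin{equation*}
\frac{(\mu(\cP(q_n))-\mu(q_n))(\mu(\cP(\overline{q}_n))-\mu(\overline{q}_n))}{\sqrt{\alpha(q_n)^2+\beta(q_n)^2}\sqrt{\alpha(\overline{q}_n)^2+\beta(\overline{q}_n)^2}}
= D\tau(q_n)\!\left(\tfrac{N(q_n)}{\|N(q_n)\|}\right)\cdot D\tau(\overline{q}_n)\!\left(\tfrac{N(\overline{q}_n)}{\|N(\overline{q}_n)\|}\right).
\end{equation*}

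Next I pass to the limit. By hypothesis $\cN(q_n)\to(1,0)$ and $\cN(\overline{q}_n)\to(-1,0)$, which means that the normalized vectors $N(q_n)/\|N(q_n)\|$ and $N(\overline{q}_n)/\|N(\overline{q}_n)\|$, expressed in the basis $\cB$, converge to $e_1(x_0)$ and $-e_1(x_0)$ respectively. Since $q_n,\overline{q}_n\to x_0$ and $D\tau$ is continuous on $\Si_0$, the right-hand side converges to
\begin{equation*}
D\tau(x_0)(e_1(x_0))\cdot D\tau(x_0)(-e_1(x_0)) \;=\; -\bigl(D\tau(x_0)(e_1(x_0))\bigr)^2.
\end{equation*}

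Finally I invoke the non-degeneracy built into a prepared counter example: by item~(\ref{i.period}) of Definition~\ref{d.prepared} together with the remark identifying $\tau(s)$ with $\tau(x_s)$, the derivative of the return time in the direction of $\col(X,Y)\cap\Si_0$ at $x_0$ is non-zero, i.e.\ $D\tau(x_0)(e_1(x_0))\neq 0$. Hence the limit above is strictly negative, so the left-hand side is strictly negative for every $n$ large enough. Multiplying back by the positive factor $\sqrt{\alpha(q_n)^2+\beta(q_n)^2}\sqrt{\alpha(\overline{q}_n)^2+\beta(\overline{q}_n)^2}$ gives the desired strict inequality on the unnormalized product. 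There is essentially no obstacle here: the only subtlety is noting that the contrapositive of Lemma~\ref{l.lemamatador} alone is too weak, and that what one really needs is the continuity-and-limit argument inside its proof, which automatically produces the conclusion for all large $n$.
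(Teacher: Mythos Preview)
Your argument is correct and is essentially the paper's approach: the corollary is presented there as an immediate consequence of the hypothesis $D\tau(x_0)(e_1(x_0))\neq 0$ together with Lemma~\ref{l.lemamatador}, and your computation simply unwinds that lemma's proof to extract the strictly negative limit directly. One small comment: your remark that the contrapositive of Lemma~\ref{l.lemamatador} is ``too weak'' is not quite right---if the product were $\geq 0$ for infinitely many $n$, passing to that subsequence would still satisfy hypothesis~(1) and now hypothesis~(2) for every index, so the lemma would force $D\tau(x_0)e_1(x_0)=0$; thus the contrapositive, applied to subsequences, already gives the conclusion for all large $n$.
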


\subsection{The case $D\cP(x_0)\neq Id$}
In this section, $(U,X,Y,\Si,\cB)$ is a prepared counter example to Theorem~\ref{teoprincipal} so that 
the derivative of the first return map $\cP$ at the point $x_0=\Sigma_0\cap \gamma_0$ is not the identity map.
Recall that $D\cP(x_0)$ admits an eigenvalue equal to $1$ directed by $e_1(x_0)$, 
has no eigenvalues  different from
$1$ and is orientation preserving.  

Recall that $\col{(X,Y,U)}$ cuts the solid torus $U$ in two components $U^+$ and $U^-$.  
Let denote $\Sigma_+=\Sigma_0\cap U^+$ and $\Sigma_-=\Sigma_0\cap U^-$. 

\begin{lemm}\label{l.sign} Let $(U,X,Y,\Si,\cB)$ be a prepared counter example to 
Theorem~\ref{teoprincipal} so that 
the derivative of the first return map $\cP$ at the point $x_0$ is not the identity map.

There is a neighborhood $W$ of $x_0$ in $\Sigma$ so that the map $\mu(\cP(x))-\mu(x)$ restricted 
to $W$ vanishes only on $\col{(X,Y,U)}$. 

More precisely, \begin{itemize}
                 \item $(\mu(\cP(x))-\mu(x))(\mu(\cP(y))-\mu(y))>0$ if $x \mbox{ and } y\in W\cap \Si_+$  and if  $x \mbox{ and } y\in W\cap \Si_-$
                 \item $(\mu(\cP(x))-\mu(x))(\mu(\cP(y))-\mu(y))<0$ if $x\in W\cap \Si_+ \mbox{ and } y\in W\cap \Si_-$ and 
                 if $x\in W\cap \Si_- \mbox{ and } y\in W\cap \Si_+$
                 \item $(\mu(\cP(x))-\mu(x))=0$ and  $x\in W$ if and only if $x\in\col{(X,Y,U)}$.
                \end{itemize}

\end{lemm}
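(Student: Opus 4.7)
The plan is to exploit the fact that $\cP$ is the identity on $\col(X,Y,U)\cap\Si_0$ to refine the naive first-order Taylor expansion of the continuous function $f(x):=\mu(\cP(x))-\mu(x)$ near $x_0$. First I would introduce local $C^1$ coordinates $(\tilde u,\tilde v)$ on $\Si_0$ near $x_0$ with $x_0=(0,0)$, $\col(X,Y,U)\cap\Si_0=\{\tilde v=0\}$, $\partial_{\tilde u}|_{x_0}=e_1(x_0)$ and $\partial_{\tilde v}|_{x_0}=e_2(x_0)$, and (up to relabelling) $\Si_+=\{\tilde v>0\}$. Such coordinates exist because $\col(X,Y,U)\cap\Si_0$ is a $C^1$ curve through $x_0$ tangent to $e_1(x_0)$; the straightening map is $C^1$.

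By Lemma~\ref{l.casoph} both eigenvalues of $D\cP(x_0)$ equal $1$, and since $\cP$ fixes $\col(X,Y,U)\cap\Si_0$ pointwise one has $D\cP(x_0)e_1(x_0)=e_1(x_0)$; writing $D\cP(x_0)e_2(x_0)=ae_1(x_0)+e_2(x_0)$, the hypothesis $D\cP(x_0)\neq\mathrm{Id}$ is exactly $a\neq 0$. Because $\cP$ is $C^1$ and $\cP(\tilde u,0)=(\tilde u,0)$, Hadamard's lemma applied componentwise writes $\cP(x)-x=\tilde v\cdot G(x)$ for a continuous vector field $G$ on a neighborhood of $x_0$ in $\Si_0$, with $G(x_0)=\partial_{\tilde v}(\cP-\mathrm{Id})|_{x_0}=ae_1(x_0)\neq 0$; in particular $|\cP(x)-x|$ and $|\tilde v|$ are comparable near $x_0$.

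Next I would note that $K:=D\mu(x_0)e_1(x_0)\neq 0$. Indeed, the map $s\mapsto x_s:=\gamma_s\cap\Si_0$ is a $C^1$ parametrization of $\col(X,Y,U)\cap\Si_0$ (extracted from the diffeomorphism $\varphi$ of item~(\ref{i.mu}) of Definition~\ref{d.prepared}) whose velocity at $s=0$ is a nonzero vector tangent to $\col(X,Y,U)\cap\Si_0$ at $x_0$, hence a nonzero multiple of $e_1(x_0)$; since $\mu(x_s)=s$ by construction, differentiation at $s=0$ gives $D\mu(x_0)e_1(x_0)\neq 0$. The fundamental theorem of calculus combined with the previous paragraph then yields
$$f(x)=D\mu(x)(\cP(x)-x)+o(|\cP(x)-x|)=\tilde v\,D\mu(x)G(x)+o(|\tilde v|),$$
and continuity of $x\mapsto D\mu(x)G(x)$ at $x_0$ (where it equals $aK\neq 0$) rewrites this as $f(x)=\tilde v\bigl(aK+\eta(x)\bigr)$ with $\eta(x)\to 0$ as $x\to x_0$. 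Shrinking $W$ so that $|\eta|<|aK|/2$ on $W$, the sign of $f(x)$ coincides with $\operatorname{sign}(\tilde v)\cdot\operatorname{sign}(aK)$: nonzero and constant on $W\cap\Si_+$, nonzero and constant of the opposite sign on $W\cap\Si_-$, and zero precisely on $W\cap\col(X,Y,U)$. This yields all three assertions of the lemma.

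The delicate point is that the remainder in the Taylor expansion must be bounded by $o(|\cP(x)-x|)=o(|\tilde v|)$, not merely by $o(|x-x_0|)$: without using that $\cP$ is the identity on $\col(X,Y,U)\cap\Si_0$ to absorb a factor of $\tilde v$, the leading term $\tilde v\cdot aK$ could be masked by the remainder along directions approaching $x_0$ tangentially to $\col(X,Y,U)$, and the sign analysis would fail near such tangential directions.
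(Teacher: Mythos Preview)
Your argument is correct. Both your proof and the paper's hinge on the same computation, namely that the differential at $x_0$ of $f(x)=\mu(\cP(x))-\mu(x)$ in the direction $e_2(x_0)$ equals $aK\neq 0$ (in your notation), but you package the conclusion differently. You straighten $\col(X,Y,U)\cap\Si_0$, invoke Hadamard's lemma to factor $\cP(x)-x=\tilde v\,G(x)$, and then the fundamental theorem of calculus to obtain the explicit product form $f(x)=\tilde v\bigl(aK+\eta(x)\bigr)$, from which the sign dichotomy is read off directly. The paper instead observes that $Df(x_0)\neq 0$, applies the implicit function theorem to conclude that $\{f=0\}$ is a $C^1$ curve near $x_0$, and then notes that this curve contains (hence equals) $\col(X,Y,U)\cap\Si_0$ since $\col(X,Y,U)\subset\operatorname{Fix}(\cP)$.

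Your closing remark about the ``delicate point'' is accurate for a direct first-order sign analysis at $x_0$, but it slightly overstates the necessity of the Hadamard route: the paper's implicit-function-theorem argument sidesteps the tangential-remainder issue entirely, while still using that $\cP$ is the identity on $\col(X,Y,U)\cap\Si_0$ (only to identify the zero set, not to absorb a factor of $\tilde v$). Your approach has the virtue of giving an explicit quantitative factorization; the paper's is a couple of lines shorter.
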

\begin{proof}The derivative $D\mu(x_0)(e_1(x_0))$ do not vanish (item~\ref{i.mu} of 
Definition~\ref{d.prepared}).  Thus
the kernel of $D\mu(x_0)$ is tranverse to $e_1(x_0)$.  The derivative $D\cP(x_0)$ admits $e_1(x_0)$ 
as an eigenvector (for the eigenvalue $1$)
and has no eigenvalue different from $1$ and is not the identity.  
This implies that the kernel of $D\mu(x_0)$ 
is not an eigendirection of $D\cP(x_0)$. Notice that the derivative at $x_0$ of the map $\mu(\cP(x))-\mu(x)$ is
$D\mu(x_0) D\cP(x_0)-D\mu(x_0)$.  As we have seen above the kernel of $D\mu(x_0) D\cP(x_0)$ is different from the kernel
of $D\mu(x_0)$ so that $D\mu(x_0) D\cP(x_0)\neq D\mu(x_0)$.

As a consequence the derivative of the function 
$x\mapsto\mu(\cP(x))-\mu(x))$ does not vanish at $x=x_0$. 

Thus $\{x\in \Si, \mu(\cP(x))-\mu(x))=0\}$ is a codimension $1$ submanifold in a neighborhood of 
$x_0$ in $\Sigma_0$,
and this submanifold contains $\col{(X,Y,U)}$, because $\col{(X,Y,U)}\subset Fix(\cP)$.  
Therefore these submanifolds coincide in the 
neighborhood of $x_0$, concluding.
\end{proof}

We are now ready to prove the following proposition: 

\begin{prop}\label{p.derivada}If $(U,X,Y,\Si,\cB)$ is a prepared counter example to Theorem~\ref{teoprincipal} then 
$D\cP(x)$ is the identity map for every $x\in\col{(X,Y,U)}\cap\Si_0$. 
\end{prop}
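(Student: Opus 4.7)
The proof will proceed by contradiction, combining Corollary~\ref{c.link}, Corollary~\ref{c.gira}, Lemma~\ref{l.sign}, and Corollary~\ref{c.lemamatador}.

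First I would argue that it is enough to prove $D\cP(x_0)=Id$ at the single point $x_0=\gamma_0\cap\Sigma_0$. Indeed, for any $s\in(-\mu_0,\mu_0)$, by Remark~\ref{r.prepared} the triple $(U,X-sY,Y,\Si,\cB)$ is again a prepared counter example (note that $Y$, and hence the first return map $\cP$ and the cross-section $\Sigma_0$, as well as $\col(X,Y,U)=\col(X-sY,Y,U)$, are unchanged, and the same basis $\cB$ is still admissible since $\col(X-sY,Y)=\col(X,Y)$). The zero of $X-sY$ on $\Sigma_0$ is $x_s=\gamma_s\cap\Sigma_0$. So reducing to $s=0$, it suffices to show $D\cP(x_0)=Id$.

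Now assume by contradiction that $D\cP(x_0)\neq Id$. By Corollary~\ref{c.link} we have $(\ell^+(X,Y),\ell^-(X,Y))\neq(0,0)$, so without loss of generality $\ell^+(X,Y)\neq 0$ (the case $\ell^-(X,Y)\neq 0$ is symmetric). Let $W$ be the neighborhood of $x_0$ provided by Lemma~\ref{l.sign}. I pick a sequence $x_n\in U^+\cap\Sigma_0\cap O_3$ with $x_n\to x_0$. By Lemma~\ref{l.segment}, the segment $[x_n,\cP^2(x_n)]$ is contained in $U^+$, and by Corollary~\ref{c.gira} the map $\cN$ restricted to this segment surjects onto $\SS^1$. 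Therefore I can choose points
$$q_n,\overline{q}_n\in[x_n,\cP^2(x_n)]\subset\Sigma_+$$
such that $\cN(q_n)=(1,0)$ and $\cN(\overline{q}_n)=(-1,0)$. Since $\cP$ is continuous and fixes $x_0$, the segment $[x_n,\cP^2(x_n)]$ shrinks to $\{x_0\}$, hence $q_n,\overline{q}_n\to x_0$; in particular both sequences eventually lie in $W\cap\Sigma_+$.

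To conclude, I apply the two lemmas that describe the sign of $\mu(\cP(\cdot))-\mu(\cdot)$. On one hand, Corollary~\ref{c.lemamatador} (whose hypotheses are satisfied by construction of $q_n$ and $\overline{q}_n$) gives
$$(\mu(\cP(q_n))-\mu(q_n))(\mu(\cP(\overline{q}_n))-\mu(\overline{q}_n))<0$$
for every $n$ large enough. On the other hand, since $D\cP(x_0)\neq Id$, Lemma~\ref{l.sign} applies on $W$, and the fact that both $q_n$ and $\overline{q}_n$ lie in $W\cap\Sigma_+$ gives
$$(\mu(\cP(q_n))-\mu(q_n))(\mu(\cP(\overline{q}_n))-\mu(\overline{q}_n))>0.$$
This is the desired contradiction, so $D\cP(x_0)=Id$, completing the proof. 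The delicate point, already handled by the preparatory material, was producing the two sequences $q_n,\overline{q}_n$ \emph{both} in the same component $\Sigma_+$ while $\cN$ points in opposite directions; this is exactly what Corollary~\ref{c.gira} (the angular variation strictly greater than $2\pi$ on one excursion of $\cP^2$) and Lemma~\ref{l.segment} (the segment does not cross $\col(X,Y,U)$) guarantee.
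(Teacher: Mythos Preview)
Your proof is correct and follows essentially the same route as the paper: you reduce to the point $x_0$ via Remark~\ref{r.prepared}, then use Corollary~\ref{c.gira} and Lemma~\ref{l.segment} to produce $q_n,\overline q_n\in\Sigma_+$ with opposite values of $\cN$, and finally play Corollary~\ref{c.lemamatador} against Lemma~\ref{l.sign} to reach a contradiction. The only cosmetic difference is the order of presentation (the paper first treats $x_0$ and then extends to all $x_t$, whereas you perform the reduction first).
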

\begin{proof}Up to exchange $+$ by $-$, we assume that $\ell^+(X,Y)\neq 0$. 
Then Corollary~\ref{c.gira} implies that there are sequences $q_n,\bar q_n\in \Si_+$ tending to $x_0$ and so that 
$\cN(q_n)=(1,0)$ and $\cN(-q_n)=(-1,0)$. 
More precisely Corollary~\ref{c.gira} implies that for any $x\in\Si_+$ close enough to $x_0$ the segment $[x,\cP^2(x)]$
contains points $q,\bar q$ with $\cN(q)=(1,0)$ and $\cN(\bar q)=(-1,0)$. Now Lemma~\ref{l.segment} implies that the 
segment is contained in $\Si_+$, concluding.

Now Corollary~\ref{c.lemamatador} implies that, for $n$ large enough, 
the sign of the map $(\mu(\cP(x))-\mu(x))$
is different on $q_n$ and on $\bar q_n$. One concludes with Lemma~\ref{l.sign} 
which says that this sign cannot change
if the derivative $D\cP(x_0)$ is not the identity. Thus we proved $D\cP(x_0)=Id$.  

Now if $x\in\col{(X,Y,U)}$ then $x$ is of the form $x=x_t=\gamma_t\cap \Sigma_0$.  
According to Remark~\ref{r.prepared}
$(U,X-tY,Y,\Si,\cB)$ is also a prepared counter example to Theorem~\ref{teoprincipal}, and the 
linking number of
$\ell^+(X-tY,Y)$ is the same as the linking number of $\ell^+(X,Y)$, and therefore is not vanishing. 
Notice that the first return map $\cP$ is not affected by the substitution of the pair $(X,Y)$ by the pair $(X-tY,Y)$ but the point 
$x_0$ is now replaced by the point $x_t$.
Therefore the argument above establishes 
that $D\cP(x_t)=Id$ for every $x_t\in \col{(X,Y,U)}\cap\Si_0$.
\end{proof}

\section{Proof of Theorem \ref{teoprincipal}}\label{s.proof}
In the whole section, $(U,X,Y,\Si,\cB)$ is a prepared counter example to Theorem~\ref{teoprincipal}. 
According to Corollary~\ref{c.link} one of the linking numbers $\ell^+(X,Y)$, $\ell^-(X,Y) $
does not vanish, so that, up to exchange $+$ with $-$, one may assume $\ell^+(X,Y)\neq 0$.
According to Proposition~\ref{p.derivada} 
the derivative $D\cP(x)$ is the identity map for every $x\in\col{(X,Y,U)}\cap\Si_0$.

This means that, in a neighborhood of $\col{(X,Y,U)}$, the diffeomorphism $\cP$ is $C^1$ 
close to the identity map. The techniques introduced in \cite{Bonatti_esfera} and \cite{Bonatti_geral}
allow to compare the diffeomorphism $\cP$ with the vector field $\cP(x)-x$, and we will analyse the behavior 
of this vector field. 
We will first show that, in the neighborhood of $\col{(X,Y,U)}$ these vectors  are almost tangent to the kernel of 
$D\mu$. As the fibers of $D\mu$ are tranverse to $\col{(X,Y,U)}$ we get a topological dynamics of 
$\cP$ similar to the partially hyperbolic case of Section \ref{partial}.  We will end contradicting 
Corollary \ref{c.gira}.


\paragraph{Quasi invariance of the map $\mu$ by the first return map.}
Recall that $\mu$ is the coordinate of $X$ in the $Y$ direction: $X(x)=N(x)+\mu(x)Y(x)$.
The aim of this section is to prove

\begin{lemm}\label{l.angle} If $x_n\in \Sigma_+$ is a sequence of points tending to $x\in\col{(X,Y,U)}$ and if 
$v_n\in T_{x_n}\Sigma_+$ is the unit tangent vector directing the segment $[x_n,\cP(x_n)]$ then $D\mu(x_n)(v_n)$ 
tends to $0$.
\end{lemm}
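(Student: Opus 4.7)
The plan is to argue by contradiction: suppose along some subsequence $|D\mu(x_n) v_n| \ge \delta > 0$, where $v_n = (\cP(x_n) - x_n)/r_n$ and $r_n := |\cP(x_n) - x_n|$. A first-order Taylor expansion of $\mu$ along the chord from $x_n$ to $\cP(x_n)$, combined with Corollary~\ref{derivative.return.time}, would give
\[
-D\tau(x_n) N(x_n) = \mu(\cP(x_n)) - \mu(x_n) = D\mu(x_n) v_n \cdot r_n + o(r_n),
\]
so the hypothesis would become $|D\tau(x_n) N(x_n)| \ge (\delta/2) r_n$ for large $n$, with a fixed sign along a further subsequence; I would assume $\mu(\cP(x_n)) - \mu(x_n) > 0$ for concreteness.

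The next step is to extract two rate estimates. First, Proposition~\ref{p.derivada} gives $D\cP = \mathrm{Id}$ on $\col(X,Y,U)\cap\Si_0$; writing
\[
\cP(x) - x = \int_0^1 \bigl(D\cP(\pi(x) + t(x-\pi(x))) - \mathrm{Id}\bigr)(x - \pi(x))\,dt,
\]
where $\pi(x)$ denotes a nearest point of $\col$, and using continuity of $D\cP$, one obtains $r_n = o(d_n)$ with $d_n := d(x_n, \col)$. Second, differentiating the identity $\mu\circ\cP - \mu = -D\tau\cdot N$ at any $\tilde x \in \col\cap\Si_0$ (where $\cP(\tilde x) = \tilde x$, $D\cP(\tilde x) = \mathrm{Id}$ and $N(\tilde x) = 0$) yields $D\tau(\tilde x)\,DN(\tilde x) = 0$; a Taylor expansion of $N$ at $\pi(x_n)$ together with continuity of $D\tau$ then gives $D\tau(x_n) N(x_n) = o(d_n)$.

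The contradiction would then come from Corollary~\ref{c.gira}: on the segment $\sigma_n := [x_n, \cP^2(x_n)]\subset\Si_0\cap U^+$ (of length at most $\sim 2r_n$, contained in $\Si_+$ by Lemma~\ref{l.segment}), $\cN$ covers every direction of $\SS^1$, so there exists $p_n\in\sigma_n$ with $\cN(p_n)\to e_1(x_0)$. By Corollary~\ref{derivative.return.time},
\[
|\mu(\cP(p_n)) - \mu(p_n)| = |N(p_n)|\cdot|D\tau(x_0) e_1(x_0)|\cdot(1 + o(1)),
\]
which in the generic case $|N(x)|\sim d(x,\col)$ near $x_0$ is of order $d_n$, while the trivial bound $|\mu(\cP(p_n)) - \mu(p_n)| = O(|\cP(p_n) - p_n|) = o(d_n)$ (using the first rate estimate at $p_n$) produces the contradiction. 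The hard part will be the degenerate case $DN(x_0) e_2(x_0) = 0$, where $|N|$ decays faster than $d(\cdot,\col)$ and the quantitative inconsistency above fails; to handle it I would combine the analysis at $p_n$ with an antipodal point $q_n \in \sigma_n$ satisfying $\cN(q_n) \to -e_1(x_0)$, apply Corollary~\ref{c.lemamatador} to force opposite signs of $\mu\circ\cP - \mu$ at $p_n, q_n$, and use the invariance $D\cP\cdot N = N\circ\cP$ from Lemma~\ref{l. u invariante} to pin down the rate relation between $r_n$, $|N(x_n)|$ and $d_n$ needed to close the argument.
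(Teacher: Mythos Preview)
Your proposal gathers the right ingredients --- Corollary~\ref{c.gira} to locate points where $\cN$ is $\pm e_1$, Corollary~\ref{c.lemamatador} to force opposite signs of $f:=\mu\circ\cP-\mu$, and Proposition~\ref{p.derivada} yielding $Df(x_0)=0$ --- but there is a genuine gap in the final assembly.

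First, the case split is a detour. Your ``generic case'' argument nowhere uses the contradiction hypothesis $|D\mu(x_n)v_n|\ge\delta$: it shows outright that $DN(x_0)e_2(x_0)\neq 0$ is incompatible with Corollary~\ref{c.gira} together with the estimate $|\cP(p)-p|=o(d(p,\col))$. So what you actually prove there is the side fact $DN(x_0)e_2(x_0)=0$, and the lemma lives entirely in what you call the degenerate case. Note also that your second rate estimate $f(x_n)=o(d_n)$ is correct but too weak: the target is $f(x_n)=o(r_n)$, and you have $r_n=o(d_n)$.

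The gap is in the degenerate case. You gesture toward Lemma~\ref{l. u invariante} and unspecified ``rate relations'' among $r_n$, $|N(x_n)|$, $d_n$, but none of that is needed and it is unclear how it would close. The missing step is elementary once you exploit the fact you already derived, $Df(x_0)=0$ (hence $\sup|Df|\to 0$ near $x_0$). With $p_n,q_n\in[x_n,\cP^2(x_n)]$ (a segment of length $\le 3r_n$), $\cN(p_n)\to(1,0)$, $\cN(q_n)\to(-1,0)$ and $f(p_n)f(q_n)<0$, the mean value inequality gives
\[
|f(p_n)|+|f(q_n)|=|f(p_n)-f(q_n)|\le \Bigl(\sup_{[p_n,q_n]}|Df|\Bigr)\cdot|p_n-q_n|=o(1)\cdot O(r_n)=o(r_n),
\]
and likewise $|f(x_n)-f(p_n)|=o(r_n)$, hence $|f(x_n)|=o(r_n)$. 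By your own Taylor expansion $f(x_n)/r_n = D\mu(x_n)v_n + o(1)$, this is exactly the conclusion. This is precisely the paper's argument (Lemma~\ref{l.angle2}), without the contradiction framing or the case split.
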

According to Remark~\ref{r.prepared}, it is enough to prove Lemma~\ref{l.angle} in the case 
$x=x_0=\gamma_0\cap\Sigma_0$. Lemma~\ref{l.angle} is now a straighforward consequence 
of the following lemma

\begin{lemm}\label{l.angle2}
$$\lim_{x\to x_0, x\in\Sigma_+}\frac{\mu(\cP(x))-\mu(x)}{d(\cP(x),x)}=0,$$
 where $d(\cP(x),x)$ denotes the distance between $x$ and $\cP(x)$. 
\end{lemm}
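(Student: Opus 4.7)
My plan is to analyse the $C^1$ function $\Phi(x):=\mu(\cP(x))-\mu(x)$ near $x_0$. The starting observation is that for every $x^{*}\in\col(X,Y,U)\cap\Si_0$ one has $\cP(x^{*})=x^{*}$ and, by Proposition~\ref{p.derivada}, $D\cP(x^{*})=\mathrm{Id}$, so
$$D\Phi(x^{*})=D\mu(x^{*})\circ D\cP(x^{*})-D\mu(x^{*})=0.$$
Thus $D\Phi$ vanishes identically on the $C^1$ curve $\col(X,Y,U)\cap\Si_0$, and by continuity $\|D\Phi(z)\|$ tends to $0$ as $z\to x_0$. The idea is then to locate, for every $x\in\Si_+$ sufficiently close to $x_0$, a point $x'_x\in[x,\cP^2(x)]$ at which $\Phi$ vanishes: a mean value inequality along $[x,x'_x]$ will bound $|\Phi(x)|$ by a quantity of order $o(|x-x'_x|)$, and since $|x-x'_x|$ is comparable to $d(\cP(x),x)$, the conclusion follows.

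To produce such an $x'_x$ I combine Corollaries~\ref{c.gira} and~\ref{c.lemamatador}. Since $\ell^+(X,Y)\neq 0$, Corollary~\ref{c.gira} asserts $\cN([x,\cP^2(x)])=\SS^1$, so I may choose $q_x,\bar q_x\in[x,\cP^2(x)]$ with $\cN(q_x)=(1,0)$ and $\cN(\bar q_x)=(-1,0)$. As $x\to x_0$ the segment $[x,\cP^2(x)]$ shrinks to the point $x_0$, hence $q_x,\bar q_x\to x_0$. Corollary~\ref{c.lemamatador} then yields $\Phi(q_x)\cdot\Phi(\bar q_x)<0$ for $x$ close enough to $x_0$, and the intermediate value theorem applied to the continuous function $\Phi$ along the straight sub-segment of $[x,\cP^2(x)]$ joining $q_x$ to $\bar q_x$ produces the desired zero $x'_x$.

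The final step is the mean value estimate. Applying the mean value inequality to $\Phi$ on $[x,x'_x]\subset [x,\cP^2(x)]$ gives
$$|\Phi(x)|=|\Phi(x)-\Phi(x'_x)|\leq\sup_{z\in[x,x'_x]}\|D\Phi(z)\|\cdot|x-x'_x|.$$
Since $[x,x'_x]$ lies in an arbitrarily small neighborhood of $x_0$ as $x\to x_0$, and $D\Phi$ vanishes at $x_0$, the supremum tends to $0$. For the length, continuity of $D\cP$ and $D\cP(x_0)=\mathrm{Id}$ imply that $\cP$ is $(1+\eps)$-Lipschitz on a neighborhood of $x_0$, so the mean value theorem for $\cP$ gives $|\cP(x)-\cP^2(x)|\leq(1+\eps)|\cP(x)-x|$, and the triangle inequality yields $|x-x'_x|\leq|x-\cP^2(x)|\leq(2+\eps)\,d(\cP(x),x)$. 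Multiplying proves $|\Phi(x)|=o(d(\cP(x),x))$, which is the claim.

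I expect the most delicate step to be the sign argument of the second paragraph: without the full force of Corollary~\ref{c.gira}, which forces the large angular variation of $\cN$ on the very short segment $[x,\cP^2(x)]$, there would be no mechanism for $\Phi$ to change sign nearby, and neither the intermediate value theorem nor the subsequent mean value argument could be set in motion. In this sense the present lemma is really the point at which all the earlier geometric inputs (nonvanishing linking number, rotation of $\cN$, and the opposite-sign conclusion of Corollary~\ref{c.lemamatador}) get combined.
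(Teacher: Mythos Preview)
Your proof is correct and follows essentially the same strategy as the paper: both arguments use Corollary~\ref{c.gira} to locate points $q_x,\bar q_x\in[x,\cP^2(x)]$ where $\cN$ takes the values $(\pm 1,0)$, invoke Corollary~\ref{c.lemamatador} to force $\Phi(q_x)\Phi(\bar q_x)<0$, and exploit $D\Phi(x_0)=0$ (via Proposition~\ref{p.derivada}) together with a mean value estimate along the short segment $[x,\cP^2(x)]$.

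The execution differs slightly. The paper combines the three quantities $f(x)-f(q_x)$, $f(q_x)+f(\bar q_x)$, $f(x)-f(\bar q_x)$ via the triangle inequality, using $|f(q_x)+f(\bar q_x)|\le|f(q_x)-f(\bar q_x)|$ to handle the middle term; you instead apply the intermediate value theorem to extract an exact zero $x'_x$ of $\Phi$ on the segment and then a single mean value inequality between $x$ and $x'_x$. Your route is a bit more streamlined. Likewise, to bound $|x-\cP^2(x)|$ by a constant times $d(x,\cP(x))$ the paper cites an estimate from~\cite{Bonatti_esfera}, whereas your $(1+\eps)$-Lipschitz argument (using $D\cP(x_0)=\mathrm{Id}$) achieves the same bound directly.
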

Note that Lemma~\ref{l.sign} implies that this estimative could not hold if the derivative $D\cP(x_0)$ was not the identity. 
Here we use Proposition~\ref{p.derivada} which asserts that the derivative of $\cP$ at the fixed points is the identity.

\begin{proof}
Fix $\eps>0$ and let us prove that $\frac{|\mu(\cP(x))-\mu(x)|}{d(\cP(x),x)}$ is smaller than $\eps$ 
for every $x$ close to $x_0$ in $\Sigma_+$. 
Recall that, according to Lemma~\ref{l.segment}, there is a neighborhood $O_2$ of $\gamma_0$ so that 
if $x\in O_2^+= O_2\cap \Sigma_+ $ then the segment of straight line $[x,\cP^2(x)]$ is contained in 
$\Sigma_+$. Furthermore, Corollary \ref{c.gira} says that $\cN|_{[x,\cP^2(x)]}$ is surjective onto $\SS^1$ (unit circle in $\RR^2$). 
In particular, there are points 
$q_x,\bar q_x\in [x,\cP^2(x)]$ so that $\cN(q_x)=(1,0)$ and $\cN(\bar q_x)=(-1,0)$.

According to Corollary~\ref{c.lemamatador} one gets
\begin{equation}
\label{e.ftemdoissinais}
(\mu(\cP(q_x))-\mu(q_x))(\mu(\cP(\overline{q}_x))-\mu(\overline{q}_x)) <0,
\end{equation}
for every $x\in O_2^+$. 

The diffeomorphism $\cP$ is $C^1$-close to the identity in a small neighborhood of $x_0$
Now  \cite{Bonatti_esfera} (see also  \cite{Bonatti_geral}) implies that there is a neigborhood 
$V_1$ of $x_0$ in $\Sigma_0$ so that if $x\in V_1$ then 
$$\|(\cP(x)-x)-(\cP(y)-y)\|< \frac{1}{2} \|\cP(x)-x\|,$$ for every 
$y$ with $d(x,y)<3\|\cP(x)-x\|$. In particular, 
$$\|\cP^2(x)-\cP(x)\|<\frac{3}{2}\|\cP(x)-x\|<2\|\cP(x)-x\|,$$ 
and thus $\|\cP^2(x)-x\|<3\|\cP(x)-x\|$.

Consider the function $f(x)=\mu(\cP(x))-\mu(x)$. Since $D\cP(x_0)=Id$, we have that $Df(x_0)=0$. 
As a consequence, there exists a neighborhood $V_2\subset V_1$ of $x_0$ such that $|Df(x)|<\frac{\eps}{9},$ for every $x\in V_2$. 

Since $\cP(x_0)=x_0$, we can choose a smaller neighborhood $V_3$ such that $\cP(x),\cP^2(x)\in V_2$, for every $x\in V_3$. This 
ensures that $$\frac{|f(q_x)-f(x)|}{d(x,\cP(x))}<\frac{\eps}{9}\frac{d(q_x,x)}{d(x,\cP(x))}\leq\frac{\eps}{3}.$$
Similar estimates hold with $\overline{q}_x$ in place of $q_x$ and in place of $x$, respectively.  

By Inequality (\ref{e.ftemdoissinais}) we see that $f(q_x)$ and $f(\bar q_x)$ have opposite signs and thus 
$$\frac{|f(q_x)+f(\bar q_x)|}{d(x,\cP(x))}\leq\frac{|f(q_x)-f(\bar q_x)|}{d(x,\cP(x))}\leq\frac{\eps}{3}.$$ 

We deduce 
\begin{eqnarray*}
\left|\frac{2f(x)}{d(x,\cP(x))}\right|&=&\frac{|f(x)-f(q_x)+f(q_x)+f(\bar q_x)+f(x)-f(\bar q_x)|}{d(x,\cP(x))}\\
&\leq&\frac{|f(x)-f(q_x)|}{d(x,\cP(x))}+\frac{|f(\bar q_x)+(q_x)|}{d(x,\cP(x))}\frac{|f(x)-f(\bar q_x)|}{d(x,\cP(x))}\\
&\leq&\eps.
\end{eqnarray*}
This establishes that $\frac{|\mu(\cP(x))-\mu(x)|}{d(\cP(x),x)}$ is
smaller than $\eps$ for every $x\in V_3\cap\Sigma_+$ 
and completes the proof.
\end{proof}

\begin{rema}\label{r.angle} The Lemmas~\ref{l.angle} and~\ref{l.angle2} depend a priori on the choice of coordinate on $\Sigma_0$ 
since they are formulated
in terms of segments of straight line $[x,\cP(x)]$, and vectors $\cP(x)-x$. Nevertheless, the choice of coordinates on
$\Si_0$ was arbitrary (see first paragraph of Section~\ref{ss.angular}) so that 
it holds indeed for any choice of $C^1$ coordinates on $\Sigma_0$ (on a neighborhood of $x_0$ depending on the choice of the coordinates). 
\end{rema}

\paragraph{Dynamics of the first return map $\cP$ in the neighborhood of $0$}

Recall that $\Sigma_0$ is  a disc endowed with an arbitrary (but fixed) choice of coordinates.
Also, $\mu\colon\Sigma_0\to \RR$ is a $C^1$-map whose derivative do not vanish along $\col{(X,Y,U)}$ and 
$\col{(X,Y,U)}\cap \Sigma_0$ is a $C^1$-curve. 

Therefore, one can choose a $C^1$-map $\nu\colon \Sigma_0\to \RR$ so that
\begin{itemize}
 \item there is a neighborhood $O$ of $x_0$ in $\Sigma_0$ so that $(\mu,\nu)\colon O\to\RR^2$ is $C^1$ diffeomorphism,
 \item $\col{(X,Y,U)}\cap O=\nu^{-1}(\{0\})$
 \item $\nu>0$ on $\Sigma_+$
\end{itemize}
We denote by $(\mu(x),\nu(x))$ the image of $x$ by $(\mu,\nu)$. 

Notice that $(\mu(x),\nu(x))$ are local coordinates on $\Sigma_0$ in a neighborhood of $x_0$. 
Remark~\ref{r.angle} allows us to use Lemma~\ref{l.angle}
and Lemma~\ref{l.angle2} in the coordinates $(\mu,\nu)$.

 As a consequence, there exist $\varepsilon>0$ so that for any point $x\in\Sigma_+$ with 
 $(\mu(x),\nu(x))\in [-\varepsilon,\varepsilon]\times(0,\varepsilon]$, one has
 \begin{equation}\label{e.100}
 |\mu(\cP(x))-\mu(x)|<\frac1{100}|\nu(\cP(x))-\nu(x)|.
 \end{equation}

In particular, since $\cP$ has no fixed point in $\Sigma_+$ (Corollary~\ref{c.nofixed}), 
one gets that $\nu(\cP(x))-\nu(x)$ does not vanish for 
$(\mu(x),\nu(x))\in [-\varepsilon,\varepsilon]\times(0,\varepsilon]$, 
and in particular it has a constant sign. Up to change $\cP$ by its inverse 
$\cP^{-1}$ (which is equivalent to replace $Y$ by $-Y$), one may assume

$$\nu(\cP(x))-\nu(x)<0 , \mbox{ for } (\mu(x),\nu(x))\in [-\varepsilon,\varepsilon]\times(0,\varepsilon] $$

Next lemma allows us to define stable sets for the points in $\col{(X,Y,U)}$ and 
shows that every point in $\Sigma_+$ close to $x_0$ 
belongs to such a stable set.

\begin{lemm}
\label{l.stableset} 
Let $x\in\Sigma_+$ be such that 
$(\mu(x),\nu(x))\in [-\frac9{10}\varepsilon,\frac 9{10}\varepsilon]\times(0,\varepsilon]$.
Then, for any integer $n\geq 0$, $\cP^n(x)$ satisfies 
$$\left(\mu(\cP^n(x)),\nu(\cP^n(x))\right)\in [-\varepsilon,\varepsilon]\times(0,\varepsilon]$$
 
 Furthermore the sequence $\cP^n(x)$ converges to a point $x_\infty\in\col{(X,Y,U)}\cap \Sigma_0$  and we have 
 \begin{itemize}
 \item $\nu(x_\infty)=0$
  \item  $\mu(x_\infty)-\mu(x)\leq \frac{\nu(x)}{100}\leq\frac\varepsilon{100}$.
 \end{itemize}
 
The map $x\mapsto x_\infty$ is continuous. 
\end{lemm}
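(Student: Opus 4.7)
The plan is to turn the Lipschitz-type comparison (\ref{e.100}) together with the strict monotonicity $\nu(\cP(x))<\nu(x)$ into a telescoping argument controlling the full orbit. The first task is to check, by induction on $n$, that $\cP^n(x)$ stays in the strip $[-\varepsilon,\varepsilon]\times(0,\varepsilon]$, which is precisely what permits (\ref{e.100}) to be applied at every iterate. Since each point of $\col(X,Y,U)\cap\Sigma_0$ is a fixed point of $\cP$, local injectivity of the first return map forbids a point of $\Sigma_+$ from being mapped onto $\col(X,Y,U)$, so the iterates stay in $\Sigma_+$ and $\nu(\cP^n(x))$ remains strictly positive; the monotonicity of $\nu$ and the telescoping bound on the variation of $\mu$ then confine the orbit to the strip (concretely, $|\mu(\cP^n(x))-\mu(x)|\leq\varepsilon/100$, so $|\mu(\cP^n(x))|\leq 9\varepsilon/10+\varepsilon/100<\varepsilon$).

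Once the iterates are confined to the strip, I would telescope (\ref{e.100}) to obtain, for $m\leq n$,
$$|\mu(\cP^n(x))-\mu(\cP^m(x))|\leq\frac{1}{100}\bigl(\nu(\cP^m(x))-\nu(\cP^n(x))\bigr).$$
Combined with the monotone convergence of $\nu(\cP^n(x))$, this shows that $\mu(\cP^n(x))$ is Cauchy, and hence $\cP^n(x)\to x_\infty$ in $\Sigma_0$. Continuity of $\cP$ then forces $\cP(x_\infty)=x_\infty$, so $x_\infty\in\col(X,Y,U)$ and $\nu(x_\infty)=0$; letting $n\to\infty$ with $m=0$ in the displayed inequality yields the announced bound $\mu(x_\infty)-\mu(x)\leq\nu(x)/100\leq\varepsilon/100$.

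For the continuity of $x\mapsto x_\infty$, the same telescoping provides the global estimate $|y_\infty-y|\leq 2\nu(y)$ in the $(\mu,\nu)$-coordinates (using $|\nu(y_\infty)-\nu(y)|=\nu(y)$ and $|\mu(y_\infty)-\mu(y)|\leq\nu(y)/100$). Given a sequence $x_k\to x$ and $\delta>0$, I would choose $N$ so that $\nu(\cP^N(x))<\delta/8$; then continuity of $\cP^N$ ensures that for $k$ large one has $\nu(\cP^N(x_k))<\delta/4$ and $|\cP^N(x_k)-x_\infty|<\delta/2$, and the identity $(x_k)_\infty=(\cP^N(x_k))_\infty$ together with the estimate $|y_\infty-y|\leq 2\nu(y)$ gives $|(x_k)_\infty-x_\infty|<\delta$. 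The main technical point I anticipate is the bookkeeping needed to guarantee the stability of the strip under iteration (and in particular the use of local injectivity of $\cP$ to prevent the orbit from meeting $\col(X,Y,U)$ in finite time); once this is established, the remainder of the argument reduces to a direct application of (\ref{e.100}) and standard convergence considerations.
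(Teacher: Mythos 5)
Your proof is correct and follows essentially the same route as the paper: telescoping the estimate (\ref{e.100}) along the orbit, using the monotone decrease of $\nu$ to get that $\mu(\cP^n(x))$ is Cauchy, and deducing continuity of $x\mapsto x_\infty$ from the uniform bound applied at a late iterate. The only cosmetic differences are that you obtain the confinement to the strip directly from the telescoped bound $|\mu(\cP^n(x))-\mu(x)|\le\nu(x)/100$ instead of the paper's invariant trapezium, and that you make explicit (via the absence of fixed points of $\cP$ in $\Sigma_+$, Corollary~\ref{c.nofixed}) why $\nu(x_\infty)=0$, a point the paper leaves implicit.
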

\begin{proof} 

Consider the trapezium $D$ (in the $(\mu,\nu)$ coordinates) 
whose vertices are 
$(-\varepsilon,0)$, $(\eps,0)$, $(-\frac{9\eps}{10},\eps)$, and  $(\frac{9\eps}{10},\eps)$. This trapezium $D$
 is contained in $[-\eps,\eps]\times [0,\eps]$ and contains 
 $[-\frac 9{10}\eps,\frac 9{10}\eps]\times [0,\eps]$.  Thus for proving the first item it is enough to 
 check that $D$ is invariant under $\cP$.  For that notice that, for any $x$ with 
 $(\mu(x),\nu(x))\in [-\eps,\eps]\times [0,\eps]$   one has that $(\mu(\cP(x)),\nu(\cP(x))$ belongs to
 the triangle $\delta(x)$  whose vertices are $(\mu(x),\nu(x))$, $(\mu(x)-\frac{\nu(x)}{100},0)$, 
 $(\mu(x)+\frac{\nu(x)}{100},0)$ (according to Equation~\ref{e.100} and the fact that $\nu(\cP(x))<\nu(x)$); one conclude by noticing that, 
 if $(\nu(x),\mu(x))$ belongs to $D$ then $\delta(x)\subset D$.

 Let us show that $\cP^n(x)$ converges.

The sequence $\nu(\cP^n(x))$ is positive and decreasing, hence converges, and 
$$\sum |\nu(\cP^{n+1}(x))-\nu(\cP^n(x))|$$ converges. 
As $|\mu(\cP^{n+1}(x))-\mu(\cP^n(x))|<\frac1{100}|\nu(\cP^{n+1}(x))-\nu(\cP^n(x))|$ one deduces that the sequence $\{\mu(\cP^n(x)\}_{n\in\NN}$ is
a Cauchy sequence, hence converges. 

The continuity of $x\mapsto x_\infty$ follows from the inequality 
$\mu(x_\infty)-\mu(x)\leq \frac{\nu(x)}{100}$ applied to $\cP^n(x)$ with $n$ large, 
so that $\nu(\cP^n(x))$
is very small, and from the continuity of $x\mapsto\cP^n(x)$. 
\end{proof}

For any point $y$ with $(\mu(y),\nu(y))\in[-\frac{8}{10}\varepsilon,\frac{8}{10}\varepsilon]\times \{0\}$ the
\emph{stable set of $y$}, which we 
denote by $S(y)$, is the the union of $\{y\}$ with the set of points $x\in\Sigma_+$ with $(\mu(x),\nu(x))\in [-\frac9{10}\varepsilon,\frac 9{10}\varepsilon]\times(0,\varepsilon]$ so that 
$x_\infty=y$. The continuity of the map $x\mapsto x_\infty$ implies the following remark:
\begin{rema}
 For any point $y$ with $(\mu(y),\nu(y))\in[-\frac{8}{10}\varepsilon,\frac{8}{10}\varepsilon]\times \{0\}$, $S(y)$ is a compact set which has a 
 non-empty intersection with the horizontal lines $\{x, \nu(x)=t\}$ for every $t\in(0,\varepsilon]$. 
\end{rema}

If $E$ is a subset of $\col{(X,Y,U)}\cap\Sigma_0$ so that $\mu(y)\in[-\frac{8}{10}\varepsilon,\frac{8}{10}\varepsilon]$ for $y\in E$ one denotes 
$$S(E)=\bigcup_{y\in E}(S(y)).$$

\begin{lemm}Let $I\subset \col{(X,Y,U)}$ be the open interval 
$(\mu(x),\nu(x))\in(-\frac{1}{2}\varepsilon,\frac{1}{2}\varepsilon)\times \{0\}$. 
Consider the quotient space $\Gamma$ of $S(I)\setminus I$ by the dynamics. 
Then $\Gamma$ is a $C^1$-connected surface diffeomorphic to a cylinder 
$\RR/\ZZ\times \RR$.   
\end{lemm}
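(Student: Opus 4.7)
The strategy is to show $\Gamma$ is an orientable connected $C^1$-surface fibering over $I\cong\mathbb{R}$ with circle fibers, and then invoke that such a bundle is trivial. The plan is the following.

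First I would show that $S(I)\setminus I$ is an open $C^1$-submanifold of $\Sigma_+$. Indeed, by Lemma~\ref{l.stableset} the limit map $x\mapsto x_\infty$ is continuous on the trapezium $D$, so $S(I)\setminus I=(x\mapsto x_\infty)^{-1}(I)$ is an open subset of $\Sigma_+$. Connectedness of $S(I)\setminus I$ follows because it is a one-sided neighborhood of the connected interval $I$ in $\Sigma_0$: any two points can be joined by a path staying in the union of vertical segments $\{\mu=\mathrm{const}\}\cap S(I)$ and short arcs close to $I$.

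Second I would establish that $\cP$ induces a free, properly discontinuous, orientation-preserving $\mathbb{Z}$-action on $S(I)\setminus I$. Freeness follows from Corollary~\ref{c.nofixed} together with the strict monotonicity $\nu\circ\cP<\nu$ on $\Sigma_+$ coming from inequality~(\ref{e.100}). Proper discontinuity in the forward direction follows from $\nu(\cP^n(x))\to 0$; in the backward direction, $\nu$ strictly increases along $\cP^{-1}$-iterates, which therefore exit the trapezium in finitely many steps, so any compact $K\subset S(I)\setminus I$ is disjoint from $\cP^n(K)$ for $|n|$ large. Orientation preservation is Remark~\ref{r.P}. Hence $\Gamma$ is a connected orientable $C^1$-surface.

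Third, to identify $\Gamma$ with the cylinder, I would exploit the map $\bar\psi\colon\Gamma\to I$ induced by $x\mapsto x_\infty$. The stable set $S(y)$ of each $y\in I$ is a $C^1$-arc on which $\cP$ acts as a topological translation: this uses the parabolic character $D\cP=\mathrm{Id}$ on $I$ provided by Proposition~\ref{p.derivada} together with the quasi-invariance of the level sets of $\mu$ under $\cP$ from Lemma~\ref{l.angle}. Consequently each fiber $\bar\psi^{-1}(y)=S(y)/\cP$ is a circle $\mathbb{R}/\mathbb{Z}$. Since $I$ is diffeomorphic to $\mathbb{R}$ and therefore contractible, the circle bundle $\bar\psi$ is smoothly trivializable, yielding $\Gamma\cong I\times\mathbb{R}/\mathbb{Z}\cong\mathbb{R}\times\mathbb{R}/\mathbb{Z}$.

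The main obstacle is the third step, specifically the verification that each $S(y)$ is genuinely a one-dimensional $C^1$-arc and that $\cP$ restricts to it as a translation; this requires delicate use of the estimates from Lemma~\ref{l.angle2} and Proposition~\ref{p.derivada} to exclude pathological behaviors of the parabolic dynamics at each fixed point of $\cP$ on $I$. An alternative route, bypassing an explicit description of the fibers, is to argue that $S(I)\setminus I$ is simply connected (as a ``half-strip'' neighborhood of the open interval $I$) and then deduce from the classification of orientable connected $C^1$-surfaces with fundamental group $\mathbb{Z}$ that $\Gamma$ is a cylinder.
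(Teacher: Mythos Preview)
Your approach is genuinely different from the paper's, and the third step contains a real gap.

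The paper's proof is direct and constructive: it builds an explicit fundamental domain. One chooses a triangle $\Delta$ with vertices $(-\varepsilon/2,0)$, $(0,\varepsilon/10)$, $(\varepsilon/2,0)$ (minus the base on $\col(X,Y,U)$), checks from inequality~(\ref{e.100}) that $\Delta$ is forward-invariant under $\cP$, and observes that $\partial\Delta$ and $\cP(\partial\Delta)$ bound a strip diffeomorphic to $[0,1]\times\RR$. Gluing the two boundary curves of this strip by $\cP$ gives a cylinder, and one verifies that every $\cP$-orbit in $S(I)\setminus I$ meets the strip exactly once (or twice, on the boundary, which is identified). No structure of the individual stable sets $S(y)$ is ever used.

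Your Step~3 asserts that each $S(y)$ is a $C^1$-arc on which $\cP$ acts as a translation. Nothing in the paper proves this, and the tools you cite do not yield it: the map $x\mapsto x_\infty$ is only continuous (Lemma~\ref{l.stableset}), so its point-preimages $S(y)$ are a priori just closed sets --- they could be thick, disconnected, or fail to be manifolds. Proposition~\ref{p.derivada} and Lemma~\ref{l.angle} control $\cP$ to first order along $\col(X,Y,U)$, but say nothing about the regularity of the partition into stable sets. You correctly flag this as the main obstacle, but then do not resolve it; as written, the fiber-bundle argument does not go through.

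There is also a smaller issue in Step~2: $S(I)\setminus I$ is forward-invariant under $\cP$ but not backward-invariant (backward iterates eventually leave the rectangle where $S(I)$ is defined), so speaking of a $\ZZ$-action on $S(I)\setminus I$ is not quite correct; the quotient is by the orbit equivalence relation, and the covering-space formalism needs adjustment. Your alternative route via simple connectedness of $S(I)\setminus I$ is closer in spirit to what actually works, but you would still need to prove that $S(I)\setminus I$ is simply connected --- and the cleanest way to do that is precisely to exhibit a fundamental domain as the paper does.
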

\begin{proof} Consider the compact triangle whose end points are 
$(-\frac{\eps}{2},0)$, $(0,\frac{\eps}{10})$ and $(+\frac{\eps}{2},0)$. Let $\bar \Delta$ be its preimage by $(\mu,\nu)$. 

$\bar \Delta$ is a triangle with one side on $\col(X,Y,U)$. Let $\Delta=\bar\Delta\setminus\col(X,Y,U)$.
We denote by $\partial \Delta$ the union of the two other sides. 

As the vectors directing the two other sides have a first coordinated larger
than the second (in other words, they are more horizontal than vertical) and as the vectors $[x,\cP(x)]$ are almost vertical, one deduces that $\Delta$ is a trapping region for $\cP$: 
$$x\in\Delta \Longrightarrow\cP(x)\in \Delta.$$

Now $$\cP(\partial\Delta)$$ is a curve contained in the interior of $\Delta$ and joining  the vertex $(-\frac{\eps}{2},0)$ to the vertex 
$(+\frac{\eps}{2},0)$.  Thus $\partial\Delta$ and $\cP(\partial\Delta)$ bound a strip 
homeomorphic to $[0,1]\times \RR$ in $\Delta$. 

Let $\Gamma$ be the cylinder obtained from this strip by 
gluing $\partial\Delta$ with $\cP(\partial\Delta)$ along $\cP$. 

It remains to check that $\Gamma$ is the  quotient space of $S(I)\setminus I$ by $\cP$. 
For that, one just remark that the orbit of every point in $S(I)\setminus I$ has a unique point in the strip, unless in the case where the 
orbits meets $\partial \Delta$: in that case the orbits meets the strip twice, the first time  on $\partial \Delta$, the second on 
$\cP(\partial\Delta$).

Note that $\Gamma$ is the quotient of an open region in a smooth surface by a diffeomorphism (and the action is proper and free) so that
$\Gamma$ is not only homeomorphic to a cylinder but diffeomorphic to a cylinder. 
\end{proof}

Let us end this section by stating important straighforward consequences of the 
invariance of the vector field $N$ under $\cP$. 
\begin{rema}
\begin{itemize}
\item For every $y\in\col(X,Y,U)$ with $\mu(y)\in[-\frac{8}{10}\varepsilon,\frac{8}{10}\varepsilon]$, 
 the stable set $S(y)$ is invariant under the flow of $N$.
\item The vector field $N$ induces a vector field, denoted by $N_\Gamma$, on the quotient space $\Gamma$. As $\Gamma$ is a $C^1$ surface, 
$N_\Gamma$ is only $C^0$. However, it defines a flow on $\Gamma$ which is the quotient by $\cP$ of the flow of $N$. 
\item The continuous map $x\mapsto x_\infty$ is invariant under $\cP$ and therefore induces on $\Gamma$ a continuous map
$\Gamma\to (-\varepsilon/2,\varepsilon/2)$, and $x_\infty$ tends to $-\varepsilon/2$ when $x$ tends to one end of the cyclinder $\Gamma$ and to $\varepsilon/2$ when $x$ tends to 
the other end. This implies that, for every $t\in(-\varepsilon/2,\varepsilon/2)$ the set of points $x\in\Gamma$ for which $x_\infty=t$ is compact. 
Recall that this set is precisely the projection on $\Gamma$ of $S(y)$ where $y\in\col(X,Y,U)$ satisfies $(\mu(y),\nu(y))=(t,0)$. 
\item The vector field $N_\Gamma$ on $\Gamma$ leaves invariant the levels of the map $x\mapsto x_\infty$.  As a consequence, every orbit of $N_\Gamma$
is bounded in $\Gamma$.
\end{itemize}
\end{rema}

One deduces

\begin{lemm}\label{l.invariant}For every $y\in\col(X,Y,U)$ with $\mu(y)\in[-\frac{8}{10}\varepsilon,\frac{8}{10}\varepsilon]$, the stable set 
$S(y)\setminus \{y\}$ contains an orbit of $N$ which is invariant under $\cP$.
 In particular, there exists $x$ in $\Delta\cap S(x_0)$ whose orbit by $N$ is invariant under $\cP$. 
\end{lemm}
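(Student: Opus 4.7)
The plan is to apply Poincar\'e--Bendixson to the vector field $N_\Gamma$ on the cylinder $\Gamma$, using the invariance of level sets of $x\mapsto x_\infty$ to produce a compact invariant set, and then to lift a resulting essential periodic orbit back up to $S(I)\setminus I$ to obtain a $\cP$-invariant $N$-orbit.

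First, fix $y\in\col(X,Y,U)$ with $\mu(y)\in[-\tfrac{8}{10}\varepsilon,\tfrac{8}{10}\varepsilon]$ and let $L_y\subset\Gamma$ be the projection of $S(y)\setminus\{y\}$, which by construction coincides with the level set $\{z\in\Gamma:z_\infty=\mu(y)\}$. By the remarks preceding the lemma, $L_y$ is a non-empty compact subset of $\Gamma$ invariant under the flow of $N_\Gamma$ (because each stable set $S(y)$ is $N$-invariant and $x\mapsto x_\infty$ is $\cP$-invariant). Moreover $N_\Gamma$ has no zero on $\Gamma$, since $N$ vanishes only on $\col(X,Y,U)$, which is not in the domain. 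Because the underlying vector field $N$ upstairs is $C^1$, the flow of $N_\Gamma$ enjoys uniqueness of solutions, even though $N_\Gamma$ itself is merely $C^0$.

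Next, I would pick any $z\in L_y$ and consider its forward $N_\Gamma$-orbit. Since $L_y$ is compact and contains no fixed point of the flow, the Poincar\'e--Bendixson theorem on the cylinder $\Gamma$ forces the $\omega$-limit set of $z$ to consist of a single periodic orbit $C$ of $N_\Gamma$, entirely contained in $L_y$. The crucial topological step is to observe that $C$ cannot be null-homotopic in $\Gamma$: if $C$ were to bound a closed disk $D\subset\Gamma$, then Poincar\'e--Hopf applied to $N_\Gamma|_D$ (with $\partial D=C$ tangent to the field) would give $\sum_{z\in \zero(N_\Gamma)\cap D}\ind(N_\Gamma,z)=\chi(D)=1$, contradicting the absence of zeros of $N_\Gamma$. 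Hence $C$ represents a generator of $\pi_1(\Gamma)\simeq\ZZ$.

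Finally, I would lift $C$ through the covering $\pi\colon S(I)\setminus I\to\Gamma$, which is a $\ZZ$-cover whose deck group is generated by $\cP$. Because $C$ is essential, $\pi^{-1}(C)$ is a single connected curve, invariant under the deck transformation $\cP$; it is a single $N$-orbit (being the preimage of a single $N_\Gamma$-orbit under the covering map), and it lies in $S(y)\setminus\{y\}$ since it projects into $L_y$. For the ``in particular'' clause, apply this construction to $y=x_0$: the resulting $\cP$-invariant, non-empty $N$-orbit in $S(x_0)\setminus\{x_0\}$ must intersect any fundamental domain for $\cP$, and in particular meets the strip bounded by $\partial\Delta$ and $\cP(\partial\Delta)$, which is contained in $\Delta$, producing the desired point $x\in\Delta\cap S(x_0)$. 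The main obstacle I expect is the careful justification of Poincar\'e--Bendixson on an open cylinder for the merely $C^0$ vector field $N_\Gamma$, together with the index-theoretic argument ruling out null-homotopic periodic orbits.
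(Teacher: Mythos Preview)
Your proposal is correct and follows essentially the same route as the paper: apply Poincar\'e--Bendixson to $N_\Gamma$ on the cylinder $\Gamma$, using compactness of the level set of $x\mapsto x_\infty$ and the absence of zeros to produce an essential periodic orbit, then lift it to a $\cP$-invariant $N$-orbit in $S(y)$. You have spelled out more carefully than the paper the covering-space argument (that $\pi^{-1}(C)$ is a single connected $N$-orbit because $C$ generates $\pi_1(\Gamma)$ and the deck group is $\langle\cP\rangle$), the index-theoretic reason why $C$ cannot bound a disk, and the fact that uniqueness of solutions for $N_\Gamma$ is inherited from the $C^1$ vector field $N$ upstairs; the paper compresses all of this into two sentences.
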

\begin{proof}
A Poincar\'e Bendixson argument implies that, for every flow on the cylinder $\RR/\ZZ\times \RR$ without fixed points, for every bounded orbit
the $\omega$-limit set is a periodic orbit. Furthermore this periodic orbit is not homotopic to a point 
(otherwise it bounds a disc containing a zero). 

Since $N_{\Gamma}$ has no zeros, one just applies this argument to the flow of it restricted to the a level of the map $x\mapsto x_\infty$. The level contains a periodic orbit 
which is not homotopic to $0$, hence corresponds to an orbit of $N$ joining a point in $S(y)$ to is image under $\cP$.  
This orbit of $N$ is invariant under $\cP$, concluding.
\end{proof}

\paragraph{End of the proof of Theorem \ref{teoprincipal}: 
the vector field $N$ does not rotate along a $\cP$-invariant orbit of $N$}
 
 From now on, $(U,X,Y,\Si,\cB)$ is a prepared counter example to Theorem~\ref{teoprincipal}  
 with $\ell^+(X,Y)\neq 0$.
According to Proposition~\ref{p.derivada} the derivative $D\cP(x)$ is the 
identity map for every $x\in\col{(X,Y,U)}\cap\Si_0$.

According to Lemma~\ref{l.invariant}, there is a point $x$ in the stable set $S(x_0)$ whose $N$-orbit 
is invariant under $\cP$. 

\begin{lemm}
\label{l.naogiradois}
If $y\in[\cP^n(x),\cP^{n+1}(y)]$ then the angular variation of the vector $N(y)$ tends to $0$ when $n\to +\infty$.
\end{lemm}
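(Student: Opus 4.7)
By hypothesis the $N$-orbit $\Gamma_N$ through $x$ is $\cP$-invariant, and since $x\in S(x_0)$ the iterates $\cP^n(x)$ lie on $\Gamma_N$ and converge to $x_0$ along it. The relation $D\cP(y)N(y)=N(\cP(y))$ from Lemma~\ref{l. u invariante} means that $\cP$ intertwines with the flow of $N$, and therefore restricts to $\Gamma_N$ as the time-$T$ map of $N$, where $T$ is the period of the projected loop $\bar\Gamma_N$ in the quotient cylinder $\Gamma$. Because this loop closes up, the unit tangent direction $\cN(\cP^n(x))$ to $\Gamma_N$ takes the \emph{same} value $\cN_0\in\SS^1$ at every iterate $n$. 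In particular the angular variation of $\cN$ along each chord $[\cP^n(x),\cP^{n+1}(x)]$ is an integer multiple of $2\pi$, so the lemma will follow once I rule out full winding for large $n$ and control the residual oscillation.

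The first real step is to show that the unit direction $v_n$ of the chord converges to $\cN_0$ as $n\to\infty$. Combining $D\cP(x_0)=\mathrm{Id}$ (Proposition~\ref{p.derivada}) with the intertwining with the $N$-flow, the displacement $\cP^{n+1}(x)-\cP^n(x)$ can be written as the $N$-time-$T$ displacement at $\cP^n(x)$ up to a relative error that goes to $0$. Normalizing yields $v_n\to\cN_0$: asymptotically the chord is a short segment in the limit direction $\cN_0$, starting from a point whose $\cN$-value is already $\cN_0$.

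The second step is to bound the actual angular variation of $\cN$ along the chord. I will compare the chord with the arc of $\Gamma_N$ between its endpoints: because $\cN$ coincides with the unit tangent along the arc, and because $v_n\to\cN_0$ forces the chord and the arc to lie in an arbitrarily thin tube around each other, the closed loop formed by their concatenation bounds a shrinking region in $\Sigma_+$ over which $\cN$ must wind a bounded and integer number of times. The coincidence $\cN(\cP^n(x))=\cN(\cP^{n+1}(x))=\cN_0$, together with a continuity argument on the shrinking region, pins this integer to zero for $n$ large and forces the residual variation along the chord itself to vanish.

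The main obstacle lies in this final step: $\|D\cN\|$ can blow up as one approaches $\col(X,Y,U)$, so a naive Lipschitz bound on $\cN$ along the chord is unavailable. The argument must rely crucially on the $\cP$-invariance of $\Gamma_N$ (ruling out hidden $2\pi$-windings) and on the sharpness of the alignment $v_n\to\cN_0$ (excluding oscillation of $\cN$ transverse to $\cN_0$). Once the lemma is established, applying it both to $[\cP^n(x),\cP^{n+1}(x)]$ and to $[\cP^{n+1}(x),\cP^{n+2}(x)]$ will be set up to contradict Corollary~\ref{c.gira}, which demands total angular variation greater than $2\pi$ on $[\cP^n(x),\cP^{n+2}(x)]$.
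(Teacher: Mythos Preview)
Your plan has a genuine gap, and it is precisely at the point you flag as the ``main obstacle.''

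First, the claim that $\cN(\cP^n(x))$ takes the \emph{same} value $\cN_0$ at every iterate is not correct. The relation $D\cP(y)N(y)=N(\cP(y))$ says that $D\cP$ carries the tangent direction to $\Gamma_N$ at $\cP^n(x)$ to the tangent direction at $\cP^{n+1}(x)$, but $\cN$ is the expression of that direction in the moving basis $\cB$, which is not $\cP$-invariant. All you can get from $D\cP(x_0)=\mathrm{Id}$ and continuity of $\cB$ is that $\cN(\cP^{n+1}(x))-\cN(\cP^n(x))\to 0$; this does not give a fixed $\cN_0$, nor does it give that the variation along each chord is an exact multiple of $2\pi$ (it is only close to one: this is the content of Remark~\ref{r.var}).

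Second, and more importantly, your ``shrinking region'' comparison between the chord and the $N$-orbit arc does not go through: nothing in your argument bounds the length or the winding of the $N$-orbit segment from $\cP^n(x)$ to $\cP^{n+1}(x)$, and $\|D\cN\|$ really can blow up near $\col(X,Y,U)$, so you cannot conclude that the loop they form bounds a small disc. The paper handles exactly this difficulty by a topological (not metric) mechanism: both the $N$-orbit arc $J_n$ and an almost-straight segment $I_n$ from $\cP^n(x)$ to $\cP^{n+1}(x)$ project to \emph{simple} closed curves in the quotient cylinder $\Gamma$, and any two simple essential loops in a cylinder through the same basepoint are isotopic. This isotopy transports the easily computed integer $[var](I_n)=0$ to $[var](J_n)=0$, and then Remark~\ref{r.var} finishes. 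Without this simple-curve/isotopy argument (Lemmas~\ref{l.var} and~\ref{l.straight}), you have no way to pin down the integer multiple of $2\pi$, and your proof stalls exactly where you said it would.
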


Before proving  Lemma~\ref{l.naogiradois} let us conclude the proof of Theorem~\ref{teoprincipal}.

\begin{proof}[Proof of Theorem~\ref{teoprincipal}] Lemma~\ref{l.naogiradois}  is 
in contradiction with Corollary~\ref{c.gira}, 
which asserts that the angular variation 
of the vector $N$ along any segment $[z,\cP^2(z)]$ for $z\in\Sigma_+$ close enough to $x_0$ is larger than $2\pi$. This contradiction ends the proof of 
Theorem~\ref{teoprincipal}. 
\end{proof}

It remains to prove Lemma~\ref{l.naogiradois}. First, notice that the angular variation of $N$ along a segment of curve is invariant 
under homotopies of the curve preserving the ends points. 
Therefore Lemma~\ref{l.naogiradois} is a straighforward consequence of next lemma: 

\begin{lemm}
\label{l.naogiradois2} 
The angular variation of the vector $N(y)$ 
along the $N$-orbit segment joining $\cP^n(x)$ to $\cP^{n+1}(x)$ 
tends to $0$ when $n\to +\infty$.
\end{lemm}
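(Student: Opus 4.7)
The plan is to exploit a strong degeneracy of the normal component $N$ at $x_0$ that is forced by Corollary~\ref{c.gira}, and then to combine the $C^1$ regularity of $N$ with the fixed $N$-flow time of the orbit segment to conclude that the angular variation tends to $0$.

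The main step will be to show that $\nabla N(x_0)=0$ when $N$ is viewed as a vector field on the smooth surface $\Sigma_0$. Fix a smooth local frame $(\partial_1,\partial_2)$ on $\Sigma_0$ near $x_0$ and write $N=A\partial_1+B\partial_2$. Since the $C^1$ functions $A,B$ vanish on $\col\cap\Sigma_0=\{\nu=0\}$, one may write $A=\nu\tilde A$ and $B=\nu\tilde B$ with $\tilde A,\tilde B$ continuous; the direction $\cN$ on $\Sigma_+$ is then given by $(\tilde A,\tilde B)/\sqrt{\tilde A^2+\tilde B^2}$. If $(\tilde A(x_0),\tilde B(x_0))\neq(0,0)$, this formula would extend $\cN$ continuously to $x_0$ from $\Sigma_+$; but Corollary~\ref{c.gira} asserts that $\cN([z,\cP^2(z)])=\SS^1$ for every $z\in\Sigma_+$ close enough to $x_0$, so $\cN$ is surjective onto $\SS^1$ in arbitrarily small neighborhoods of $x_0$, a contradiction. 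Hence $\tilde A(x_0)=\tilde B(x_0)=0$, equivalently $\partial_\nu A(x_0)=\partial_\nu B(x_0)=0$; combined with $\partial_\mu A(x_0)=\partial_\mu B(x_0)=0$ (which follows from $A,B\equiv 0$ on $\{\nu=0\}$), this yields $\nabla N(x_0)=0$.

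Once this degeneracy is in hand, the estimate proceeds by a pointwise bound on the angular velocity. Along any $N$-orbit $t\mapsto \phi(t)$ one has $\dot\theta=(A\dot B-B\dot A)/(A^2+B^2)$ with $\dot A=\nabla A\cdot N$ and $\dot B=\nabla B\cdot N$; since $|A|,|B|\leq|N|$ and $|\dot A|,|\dot B|\leq|\nabla N|\cdot|N|$, the factor $|N|^2$ in the denominator cancels to yield the frame-free bound $|\dot\theta|\leq \sqrt{2}\,|\nabla N|$ along any $N$-orbit. Next, since $|N|$ vanishes at $x_0$ and is small in a small neighborhood (an estimate sharpened by $\nabla N(x_0)=0$, which gives $|N(z)|\leq o(1)\cdot|z-x_0|$ as $z\to x_0$), a Gronwall/flow-box argument shows that the $N$-orbit segment from $\cP^n(x)$ to $\cP^{n+1}(x)$ — which has fixed $N$-flow time $c>0$ — is contained in a ball $B_{r_n}(x_0)$ with $r_n\to 0$. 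Therefore the angular variation on this orbit segment is bounded above by $\sqrt{2}\,c\,\sup_{B_{r_n}}|\nabla N|$, which tends to $0$ by continuity of $\nabla N$ and $\nabla N(x_0)=0$.

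The principal obstacle is the first step: the higher-order vanishing $\nabla N(x_0)=0$ is obtained only indirectly from Corollary~\ref{c.gira}, which itself rests on Proposition~\ref{p.derivada} (the identity $D\cP(x_0)=Id$) and the preceding angular-variation analysis. Once this degeneracy is established, the cancellation in the angular-velocity formula and the localization of the orbit in a shrinking ball are routine, and the conclusion follows by combining them with the fixed flow-time $c$.
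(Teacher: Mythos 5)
Your argument is correct in substance, but it is a genuinely different route from the paper's. The paper's proof is topological: it works on the quotient cylinder $\Gamma$, introduces the class $\cI(\Delta,\cP)$ of arcs joining a point to its $\cP$-image with matching tangent vectors, shows that the integer part $[var]$ of the angular variation is a homotopy/isotopy invariant of simple closed curves on $\Gamma$ (Lemma~\ref{l.var}), compares the $N$-orbit segment with an almost-straight arc having $[var]=0$ (Lemma~\ref{l.straight}), and controls the fractional part by $D\cP(\cP^n(x))\to Id$. You instead extract from Corollary~\ref{c.gira} the infinitesimal degeneracy $DN(x_0)=0$: if the transverse derivative of $N$ at $x_0$ were nonzero, the factorization $N=\nu\,(\tilde A,\tilde B)$ would make the direction of $N$ extend continuously to $x_0$ from $\Sigma_+$, contradicting the surjectivity of $\cN$ on the shrinking segments $[z,\cP^2(z)]$ (which lie in $\Sigma_+$ by Lemma~\ref{l.segment}); you then conclude with the elementary bound $|\dot\theta|\le C\,\|DN\|$ for the angle of a vector field along its own orbits, a Gronwall localization of the orbit segment near $x_0$, and the fixed transit time $c$. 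This shortcut dispenses with Lemmas~\ref{l.var} and~\ref{l.straight} and with the curve space $\cI(\Delta,\cP)$ entirely, at the price of using second-order-type information ($DN(x_0)=0$) that the paper never needs; the paper's homotopy argument, by contrast, is insensitive to how degenerate $N$ is at $x_0$. Three points you should make explicit: (i) the constancy of the transit time between $\cP^n(x)$ and $\cP^{n+1}(x)$ is not automatic but follows from $D\cP(y)N(y)=N(\cP(y))$ (Lemma~\ref{l. u invariante}), which gives $\cP\circ N_t=N_t\circ\cP$ and hence $\cP^{n+1}(x)=N_c(\cP^n(x))$, with $\cP^n(x)\to x_0$ coming from Lemma~\ref{l.invariant} and the stable-set construction; (ii) the angular variation in the statement is measured in the merely continuous basis $\cB$, while your estimate is in a fixed $C^1$ chart --- the discrepancy tends to $0$ along paths shrinking to $x_0$, so it is harmless but deserves a sentence (cf.\ Remark~\ref{r.angle}); (iii) your closing remark reverses a dependency: Corollary~\ref{c.gira} does not rest on Proposition~\ref{p.derivada} (it is used to prove it), so your proof invokes $D\cP(x_0)=Id$ only indirectly, through the machinery that produces the point $x$.
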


As $N$ is (by definition) tangent to the $N$-orbit segment joining $\cP^n(x)$ to $\cP^{n+1}(x)$, 
its angular variation is equal 
to the angular variation of the unit tangent vector to this orbit segment. 

\paragraph{Proof of Lemma~\ref{l.naogiradois2}: the tangent vector to a 
$\cP$-invariant embedded curve do not rotate}

\begin{rema}
For $n$ large enough the point $\cP^n(x)$ belongs to the region $\Delta$ defined in the previous section, and whose quotient by the dynamics 
$\cP$ is the cyclinder $\Gamma$.  Then,

\begin{itemize}
\item any continuous curve $\gamma$ in $\Delta$ joining $\cP^n(x)$ to $\cP^{n+1}(x)$ induces on 
$\Gamma$ a closed curve, 
homotopic to the curve induced by the $N$-orbit segment joining $P^n(x)$ to $\cP^{n+1}(x)$. 
\item The curve induced by $\gamma$ is a simple curve if and only if $\gamma$ is simple and disjoint 
from $\cP^i(\gamma)$ for any $i>0$. 
\item If the curve $\gamma$ is of class $C^1$, 
the projection will be of class $C^1$ if and only if the image by $\cP$ of the unit  vector tangent 
to $\gamma$ at $\cP^n(x)$ 
is tangent to $\gamma$ (at $\cP^{n+1}(x)$). 

\item on the cylinder, any two $C^1$-embbedding $\sigma_1,\sigma_2$ of the circle, so that $\sigma_1(0)=\sigma_2(0)$ are isotopic through
$C^1$-embeddings $\sigma_t$ with $\sigma_t(0)=\sigma_i(0)$.

\end{itemize}
 
\end{rema}

We consider $\cI(\Delta,\cP)$ as being the set of $C^1$-immersed segment $I$ in $\Delta\setminus\partial\De$, so that:
\begin{itemize}
 \item if $y,z$ are the initial and end points of $I$ then $z=\cP(y)$
 \item if $u$ is a vector tangent to $I$  at $y$ then $\cP_*(u)$ is tangent to $I$ (and with the same 
 orientation.
\end{itemize}
In other words, $I\in \cI(\Delta,\cP)$ if the projection of $I$ on $\Gamma$ is a $C^1$ immersion of the 
circle, 
generating the fundamental group of the cylinder. We endow $\cI(\Delta,\cP)$ with the $C^1$-topology. 

We denote by $Var(I)\in \RR$ the angular variation of the unit tangent vector to $I$ along $I$. 
In other words, for $I\colon [0,1]\to \Delta$,  consider the unit vector 
$$\mathring{I}(t)=\frac{dI(t)/dt}{\|dI(t)/dt\|}\in \SS^1\simeq \RR/2\pi\ZZ.$$  
One can lift $\mathring{I}$ is a continuous map $\dot{I}\colon [0,1]\to \RR$.  Then 
$$Var(I)=\dot{I}(1)-\dot{I}(0),$$
this difference does not depend on the lift. 

The map $Var\colon \cI(\Delta,\cP)\to \RR$ is continuous.  Let $\widetilde{Var}(I)\in \RR/2\pi\ZZ$ be 
the projection of $Var(I)$.  In other words, $\widetilde{Var}(I)$ is the angular variation modulo $2\pi$.

\begin{rema}\label{r.var}Let $I_n\in\cI(\Delta,\cP)$ be a sequence of immersed segments such that $I_n(0)$ tends to 
$x_0\in\Sigma$. Then $\widetilde{Var}(I_n)$ tends to $0$. 

Indeed, since $D\cP(I_n(0))$ tends to the identity map, the angle between  
the tangent vectors to $I_n$ at $I_n(1)$ and $I_n(0)$ tends to $0$.
\end{rema}

As a consequence of  Remark~\ref{r.var}, we get the following lemma:
\begin{lemm}
There is a neighborhood $O$ of $x_0$ in $\Sigma$ so that  to any 
$I\in\cI(\Delta,\cP)$ with $I(0)\in O$  
there is a (unique) integer $[var](I)\in\ZZ$ so 
that  $$Var(I)-2\pi [var](I)\in\left[-\frac 1{100},\frac 1{100}\right].$$ 

Furthermore, the map $I\mapsto[var](I)$ is locally constant in $\cI(\Delta,\cP)$, hence constant under 
homotopies in $\cI(\Delta,\cP)$ keeping the initial point in $O$. 
\end{lemm}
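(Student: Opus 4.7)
The plan is to deduce this lemma directly from Remark~\ref{r.var} (equivalently, from the continuity of $D\cP$ together with Proposition~\ref{p.derivada}, which gives $D\cP(x_0)=Id$) and from the continuity of the map $Var\colon\cI(\Delta,\cP)\to\RR$ in the $C^1$-topology.

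First I would choose the neighborhood $O$ of $x_0$. By definition, any $I\in\cI(\Delta,\cP)$ satisfies $I(1)=\cP(I(0))$ and its unit tangent vector at $I(1)$ is the renormalization of $D\cP(I(0))(\mathring I(0))$; hence the angle in $\SS^1$ between $\mathring I(0)$ and $\mathring I(1)$ is controlled by $\|D\cP(I(0))-Id\|$. Since $D\cP$ is continuous and $D\cP(x_0)=Id$, there is a neighborhood $O$ of $x_0$ in $\Sigma$ so that this angle is smaller than $\frac{1}{100}$ for every $I\in\cI(\Delta,\cP)$ with $I(0)\in O$. Because $\widetilde{Var}(I)\in\RR/2\pi\ZZ$ is by definition this oriented angle, one gets $\widetilde{Var}(I)\in[-\frac{1}{100},\frac{1}{100}]\bmod 2\pi$, which provides an integer $[var](I)$ with $Var(I)-2\pi[var](I)\in[-\frac{1}{100},\frac{1}{100}]$. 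Uniqueness is immediate: two distinct such integers $n_1\neq n_2$ would force $2\pi\leq 2\pi|n_1-n_2|\leq\frac{2}{100}$, absurd.

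For local constancy, I would combine the continuity of $Var$ with the integrality of $[var]$. The subset of $\RR$ consisting of points within distance $\frac{1}{100}$ from some multiple of $2\pi$ is a disjoint union of closed intervals of length $\frac{2}{100}$, separated by gaps of length $2\pi-\frac{2}{100}>0$. On the open set $\{I\in\cI(\Delta,\cP):I(0)\in O\}$ the continuous function $Var$ takes values in this disjoint union, so on each connected component $Var$ stays inside a single interval, meaning $[var]$ is locally constant. As a consequence, if $(I_s)_{s\in[0,1]}$ is a continuous path in $\cI(\Delta,\cP)$ with $I_s(0)\in O$ for every $s$, then $[var](I_0)=[var](I_1)$.

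I do not expect any real obstacle here: the content is essentially the already-established Remark~\ref{r.var} combined with the elementary observation that a continuous function cannot jump between disjoint intervals. The only point requiring a line of justification is the $C^1$-continuity of $Var\colon\cI(\Delta,\cP)\to\RR$, which follows at once from the formula $Var(I)=\dot I(1)-\dot I(0)$ for any continuous lift $\dot I\colon[0,1]\to\RR$ of the unit tangent vector $\mathring I$, since the $C^1$-topology on immersed arcs controls precisely the map $\mathring I$ uniformly in $t$.
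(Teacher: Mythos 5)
Your proof is correct and follows exactly the route the paper intends: the paper states this lemma as an immediate consequence of Remark~\ref{r.var} (i.e.\ of the continuity of $D\cP$ together with $D\cP=Id$ on $\col(X,Y,U)\cap\Si_0$ from Proposition~\ref{p.derivada}), which forces $\widetilde{Var}(I)$ to be within $\frac1{100}$ of $0$ mod $2\pi$ when $I(0)$ is near $x_0$, and then uses continuity of $Var$ in the $C^1$-topology to get local constancy of the integer part. You have simply written out these details (choice of $O$, uniqueness of the multiple of $2\pi$, and the disjoint-intervals argument for local constancy), so there is nothing to add.
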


As a consequence we get
\begin{lemm}\label{l.var}
If $I$ and $J$ are segments in $\cI(\Delta,\cP)$ with the same intial point in $O$ and whose 
projections on the cyclinder $\Gamma$ are simple closed curves, then 
$$[var](I)=[var](J).$$
\end{lemm}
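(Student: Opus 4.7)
The strategy is to transport the equality to the quotient cylinder $\Gamma$, construct an isotopy there, and then pull it back and apply the preceding local-constancy statement.

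First, I would observe that, by the definition of $\cI(\Delta,\cP)$, the projections $\bar I$ and $\bar J$ of $I$ and $J$ to $\Gamma$ are $C^1$-embeddings of the circle, both passing through the common image $\bar x\in \Gamma$ of the initial point of $I$ and $J$, and both representing the generator of $\pi_1(\Gamma)=\ZZ$. A classical fact on the cylinder (recorded in the last bullet of the remark preceding the definition of $\cI(\Delta,\cP)$) asserts that any two such $C^1$-embedded essential circles with a common basepoint can be joined by an isotopy through $C^1$-embeddings $\bar I_s$, $s\in[0,1]$, with $\bar I_0=\bar I$, $\bar I_1=\bar J$, and $\bar I_s(0)=\bar x$ for every $s$.

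Next, I would lift this isotopy back to $\Delta$. For each $s$, the closed embedded curve $\bar I_s$ on $\Gamma$ lifts uniquely to a $C^1$-embedded arc $I_s\subset\Delta$ whose initial point is the common basepoint $I(0)=J(0)$; the other endpoint is automatically $\cP(I(0))$, and the tangent vector at the endpoint matches the $\cP$-image of the initial tangent vector (because the projection is $C^1$). Hence $I_s\in\cI(\Delta,\cP)$ for every $s\in[0,1]$, and the family $s\mapsto I_s$ depends continuously on $s$ in the $C^1$-topology. Since the initial point is fixed throughout the isotopy, it stays in the neighborhood $O$ of $x_0$.

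Finally, the previous lemma asserts that the integer-valued map $[var]\colon\cI(\Delta,\cP)\to\ZZ$ is locally constant as long as the initial point remains in $O$, and therefore constant along continuous paths in $\cI(\Delta,\cP)$ that keep the initial point in $O$. Applying this to the path $s\mapsto I_s$ yields $[var](I)=[var](I_0)=[var](I_1)=[var](J)$, which is the desired equality. The only nontrivial ingredient is the isotopy statement on the cylinder, but it reduces to the elementary fact that two $C^1$-embedded essential circles through a common point on $\RR/\ZZ\times\RR$ bound a (possibly degenerate) annular region that can be swept by a $C^1$-isotopy fixing the basepoint.
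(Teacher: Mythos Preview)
Your proof is correct and follows the same approach as the paper's own argument: isotope the projections on $\Gamma$ through $C^1$-embedded essential circles with fixed basepoint, lift, and invoke the local constancy of $[var]$. The paper adds one detail you gloss over in the lifting step: the lift of $\bar I_s$ to the universal cover $\RR^2$ of $\Gamma$ need not a priori stay inside $\Delta\setminus\partial\Delta$, so the paper composes with a diffeomorphism $\RR^2\to\Delta\setminus\partial\Delta$ that is the identity on $I\cup J$ to force the homotopy into $\cI(\Delta,\cP)$.
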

\begin{proof}
Since the projection of $I$ and $J$ are simple curves which are not homotopic to a point 
in the cylinder $\Gamma$, the projections of $I$ and $J$
are isotopic on $\Gamma$ by an isotopy keeping the initial point.  
One deduces that $I$ and $J$ are homotopic through elements $I_t\in \cI(\Delta,\cP)$ with the same
initial point. Indeed, the isotopy on $\Ga$ between the projection of $I$ and $J$ can 
be lifted to the universal cover of $\Ga$.  This universal cover is diffeomorphic to a plane $\RR^2$, in which
$\De\setminus\partial\Delta$ is an half plane (bounded by two half lines). There is a diffeomorphism of $\RR^2$
to $\De\setminus\partial\Delta$ which is the indentity on $I\cup J$. The image of the lifted isotopy induces 
the announced isotopy through elements in $\cI(\De,\cP)$. Now, as $[var](I_t)$ is independent of $t$,
one concludes that $[var](I)=[var](J)$.
\end{proof}

Now Lemma~\ref{l.naogiradois2} is a consequence of Lemma~\ref{l.var} and of the folowing lemma:

\begin{lemm}\label{l.straight}
 For any $n>0$ large enough there is a curve $I_n\in \cI(\Delta,\cP)$ whose initial point is $\cP^n(x)$ 
 and such that:
 \begin{itemize}
 \item the projection of $I_n$ on $\Gamma$ is a simple curve
  \item $[var](I_n)=0$.
 \end{itemize}
\end{lemm}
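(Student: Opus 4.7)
The plan is to take $I_n$ to be a $C^1$ curve that is close to the straight segment joining $\cP^n(x)$ to $\cP^{n+1}(x)$ in the $(\mu,\nu)$-chart introduced earlier in this section, modified only slightly near the endpoint $\cP^{n+1}(x)$ so that the tangency condition of $\cI(\Delta,\cP)$ is satisfied. Since $x\in S(x_0)$, the iterates $\cP^n(x)$ converge to $x_0$, so for $n$ large both endpoints lie in the neighborhood $O$ of $x_0$ on which $[var]$ is defined and the chart $(\mu,\nu)$ is available.

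More precisely, let $v_n$ be the unit vector along the straight segment from $\cP^n(x)$ to $\cP^{n+1}(x)$, and set $w_n=D\cP(\cP^n(x))v_n$. By Proposition~\ref{p.derivada} we have $D\cP(x_0)=\mathrm{Id}$, hence $D\cP(\cP^n(x))\to \mathrm{Id}$ and the angle $\theta_n:=\angle(v_n,w_n)$ tends to $0$. I take $I_n$ to consist of a long central piece essentially equal to the straight segment, with unit tangent $v_n$, followed by a short turning arc near $\cP^{n+1}(x)$ along which the unit tangent sweeps monotonically from $v_n$ to $w_n$. The total angular variation $Var(I_n)$ is then bounded by $\theta_n+o(1)$, which tends to $0$, so $|Var(I_n)|<\tfrac{1}{100}$ eventually and therefore $[var](I_n)=0$.

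For simplicity of the projection on $\Gamma$, I use the choice of orientation yielding $\nu(\cP(y))<\nu(y)$ together with Equation~(\ref{e.100}), which forces $I_n$ (made close enough to the straight segment) to be nearly monotone in $\nu$. By the inequality of~(\ref{e.100}), the Euclidean distance $d(\cP^n(x),\cP^{n+1}(x))$ is comparable to $\nu(\cP^n(x))-\nu(\cP^{n+1}(x))$ up to a factor arbitrarily close to $1$. Choosing the turning arc of length much smaller than this gap, the $\nu$-range of $I_n$ is contained in a small neighborhood of $[\nu(\cP^{n+1}(x)),\nu(\cP^n(x))]$, and the $\nu$-range of $\cP^k(I_n)$ lies in a small neighborhood of $[\nu(\cP^{n+k+1}(x)),\nu(\cP^{n+k}(x))]$. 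For $k\ge 2$ these intervals are disjoint, so $\cP^k(I_n)\cap I_n=\emptyset$; for $k=1$ they overlap only at the shared endpoint $\cP^{n+1}(x)$. Hence the projection of $I_n$ on $\Gamma$ is a simple closed curve, generating the fundamental group of the cylinder $\Gamma$.

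I expect the main delicacy to lie in calibrating the length and shape of the turning arc so that it neither produces enough $\nu$-wiggle to intersect $\cP(I_n)$ transversely away from the shared endpoint nor lets the curve leave the region $\Delta\setminus\partial\Delta$. Both concerns are controlled by the two vanishing quantities $\theta_n\to 0$ and $d_n:=d(\cP^n(x),\cP^{n+1}(x))\to 0$: by prescribing the turning arc to have length $\eps\, d_n$ with $\eps$ small and fixed, the deviation from the straight segment is at most of order $\eps\,\theta_n\,d_n$, which for $n$ large is negligible compared with the $\nu$-gap $\nu(\cP^n(x))-\nu(\cP^{n+1}(x))\asymp d_n$ between consecutive iterates. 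This guarantees both that $I_n\subset\Delta\setminus\partial\Delta$ and that consecutive images $\cP^k(I_n)$ remain in their expected $\nu$-slabs, completing the proof.
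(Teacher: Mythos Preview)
Your construction and the $[var](I_n)=0$ argument are exactly the paper's: build $I_n$ as a near-straight arc from $\cP^n(x)$ to $\cP^{n+1}(x)$ with tangent $v_n$ at the start and $D\cP(\cP^n(x))v_n$ at the end, and use $D\cP(\cP^n(x))\to\mathrm{Id}$ to force the total turning to be tiny.

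Your simplicity argument, however, has a small gap that the paper closes differently. You assert that the $\nu$-range of $\cP^k(I_n)$ lies in a small neighborhood of $[\nu(\cP^{n+k+1}(x)),\nu(\cP^{n+k}(x))]$ for every $k$, but you only justify the corresponding bound on the deviation of $I_n$ itself from the straight segment; nothing controls how $\cP^k$ distorts $I_n$ for large $k$. Also, for $k=1$ the two ``small neighborhoods'' of adjacent intervals can overlap in a genuine interval, so $\nu$-range considerations alone do not give that $I_n\cap\cP(I_n)$ is a single point. The paper sidesteps both issues: since all tangents to $I_n$ are close to $v_n$, which by Equation~(\ref{e.100}) is uniformly transverse to the levels of $\nu$, the function $\nu$ is \emph{strictly} decreasing along $I_n$; and since $D\cP$ is close to the identity near $x_0$, the same holds along $\cP(I_n)$. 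Hence $\cP(I_n)$ starts at the minimum of $\nu$ on $I_n$ and goes strictly downward, so $I_n\cap\cP(I_n)=\{\cP^{n+1}(x)\}$. For $k\geq 2$ one then simply uses that $\cP$ decreases $\nu$ on $\Sigma_+$ to get $\sup_{\cP^k(I_n)}\nu<\inf_{I_n}\nu$, with no need to analyze the shape of $\cP^k(I_n)$. With this adjustment your argument and the paper's coincide.
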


\begin{proof}[End of the proof of Lemma~\ref{l.naogiradois2}] The $N$-orbit segment $J_n$ joining 
$\cP^n(x)$ to $\cP^{n+1}(x)$ belongs to $\cI(\Delta,\cP)$ and its projection on $\Gamma$ is a simple curve.  
Hence  Lemma~\ref{l.var} asserts that, for $n$ large enough,  $[var](J_n)=[var](I_n)=0$ where $I_n$ 
is given by Lemma~\ref{l.straight}.

Now, when $n$ tends to infinity, $Var(J_n)-2\pi[var](J_n)$ tends to $0$ (according to Remark~\ref{r.var}), 
that is, $Var(J_n)$ tends to $0$.  This is precisely
the statement of Lemma~\ref{l.naogiradois2}.
\end{proof}

\begin{proof}[Proof of Lemma~\ref{l.straight}] As $n$ tends to $+\infty$ the derivative of $\cP$ at $\cP^n(x)$ tends to the identity map.  Thus, 
the segment
$[\cP^n(x),\cP^{n+1}(x)]$ may fail to belong to $\cI(\Delta,\cP)$ only by a very small angle between $v_n$ and $D\cP(v_n)$, where $v_n$ is the
unit 
vector directing $[\cP^n(x),\cP^{n+1}(x)]$.  Therefore, one easily builds an arc $I_n$ joining $\cP^n(x)$ 
to $ \cP^{n+1}(x)$, whose derivative at
$\cP^n(x)$ is $v_n$ and its derivative at $\cP^{n+1}(x)$ is $D\cP(v_n)$ and whose derivative at any point of $I_n$ belongs to an arbitrarily small neighborhood of
$v_n$. In particular $I_n\in\cI(\Delta,\cP)$ and $[var](I_n)=0$ for $n$ large. In order to complete the proof, it remains to show that
\begin{clai} For $n$ large enough
$I_n$ projects on $\Gamma$ as a simple curve. 
\end{clai}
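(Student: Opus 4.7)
The plan is to work in the $C^1$-coordinates $(\mu,\nu)$ on a neighborhood of $x_0$ in $\Sigma_0$ introduced above, and to exploit the two inequalities $\nu\circ\cP<\nu$ and $|\mu\circ\cP-\mu|<\frac{1}{100}|\nu\circ\cP-\nu|$ valid on $\Sigma_+$ (see~(\ref{e.100})). These say that every secant vector $\cP(y)-y$ is nearly parallel to $-\partial/\partial\nu$. Write $J_k=[\nu(\cP^{n+k+1}(x)),\nu(\cP^{n+k}(x))]$ for $k\geq 0$; these intervals satisfy $J_k\cap J_{k+1}=\{\nu(\cP^{n+k+1}(x))\}$ and $J_k\cap J_{k'}=\emptyset$ whenever $|k-k'|\geq 2$. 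The heart of the argument is to show that for every $k\geq 0$, the curve $\cP^k(I_n)$ is a $C^1$-graph $\mu=f_k(\nu)$ over $J_k$. Since $\cP^k(I_n)\cap I_n\neq\emptyset\Leftrightarrow\cP^{-k}(I_n)\cap I_n\neq\emptyset$ (whenever $\cP^{-k}$ is defined), it suffices to test simplicity by checking intersections of $I_n$ with $\cP^k(I_n)$ for $k\geq 1$.

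For $k=0$ this graph property is built into the construction of $I_n$: by hypothesis its tangent vector lies in an arbitrarily small cone around $v_n$, which itself is nearly parallel to $-\partial/\partial\nu$ (by the above inequality applied at $\cP^n(x)$); so $d\nu(I_n'(t))<0$ along $I_n$. For $k\geq 1$ I would invoke Proposition~\ref{p.derivada}: since $D\cP=\mathrm{Id}$ on $\col(X,Y,U)\cap\Sigma_0$, continuity makes $D\cP$ as $C^0$-close to $\mathrm{Id}$ as we wish on a small neighborhood of $x_0$ which (for $n$ large) contains the piece of orbit $\{\cP^j(x)\}_{j\geq n}$ by Lemma~\ref{l.stableset}. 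Because this orbit accumulates on $\col(X,Y,U)$ where $D\cP$ is \emph{exactly} the identity, the tangent vectors of $\cP^k(I_n)$ stay in the same narrow vertical cone, so $\nu\circ\cP^k$ is strictly decreasing along $I_n$ and $\cP^k(I_n)$ is a graph over $J_k$.

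Granted the graph property, simplicity is immediate by comparing $\nu$-ranges. For $k\geq 2$, $J_0\cap J_k=\emptyset$, so $I_n\cap\cP^k(I_n)=\emptyset$. For $k=1$ the two graphs share only the level $\nu=\nu(\cP^{n+1}(x))$, at which both take the $\mu$-value $\mu(\cP^{n+1}(x))$, giving the unique intersection point $\cP^{n+1}(x)$; but this is precisely the endpoint of $I_n$ identified in $\Gamma$ with the starting point of $\cP(I_n)$. Hence the projection of $I_n$ to $\Gamma$ is a simple closed curve.

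The step I expect to be the main obstacle is the \emph{uniform in $k$} graph property for $\cP^k(I_n)$: a naive iteration of a bound $\|D\cP-\mathrm{Id}\|\leq\delta$ over $k$ steps produces $(1+\delta)^k$, which is useless for large $k$. What rescues the argument is that the orbit $\{\cP^j(x)\}_{j\geq n}$ accumulates on $\col(X,Y,U)$, where $D\cP=\mathrm{Id}$ exactly, so the deviation $\|D\cP(\cP^j(x))-\mathrm{Id}\|$ shrinks along the orbit; controlling this decay (using that $\cP$ is $C^1$-close to $\mathrm{Id}$ in the whole trapezium of Lemma~\ref{l.stableset}) should keep the narrow vertical cone invariant under all forward iterates simultaneously.
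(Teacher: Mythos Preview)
Your setup and the treatment of $k=0,1$ are essentially the paper's approach: you use the $(\mu,\nu)$ coordinates, the near-verticality of $v_n$, and $D\cP\approx\mathrm{Id}$ near $x_0$ to get that both $I_n$ and $\cP(I_n)$ are graphs over the adjacent intervals $J_0$ and $J_1$, meeting only at the shared endpoint $\cP^{n+1}(x)$. This part is fine.

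The gap you yourself flag --- the uniform-in-$k$ graph property for $\cP^k(I_n)$ --- is real, and your proposed rescue (the deviations $\|D\cP(\cP^j(x))-\mathrm{Id}\|$ shrink along the orbit) does not by itself control the infinite product; there is no summability available. But the point is that this obstacle is entirely self-imposed. You do not need $\cP^k(I_n)$ to be a graph for $k\geq 2$; you only need its $\nu$-range to be disjoint from $J_0$. Once you know $\nu$ is strictly decreasing along $\cP(I_n)$ (your $k=1$ case), every point of $\cP(I_n)$ except the initial one has $\nu$-value strictly below $\inf_{I_n}\nu=\nu(\cP^{n+1}(x))$. Then the bare inequality $\nu\circ\cP<\nu$ on $\Sigma_+$ pushes all further iterates $\cP^k(I_n)$, $k\geq 2$, strictly below that level as well, with no derivative control needed. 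This is exactly how the paper proceeds: it proves $\nu(\cP(y))<\inf_{z\in I_n}\nu(z)$ for every $y\in I_n$ other than the initial point, and then simply observes that since $\cP$ lowers $\nu$, no higher iterate can cross $I_n$ either.
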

\begin{proof} We need to prove that for $n$ large enough and for any $i>1$,
$I_n\cap \cP^i(I_n)=\emptyset$ and $I_n\cap \cP(I_n)$ is a singleton (the endpoint of 
$I_n$ which is the image of its initial point). 

Indeed, it is enough to prove that, for any $y\in I_n$ different from the initial point, 
$$
\nu(\cP(y))<\inf_{z\in I_n} \nu(z).
$$
As the action of $\cP$ consists in lowing down the value of $\nu$, the further iterates cannot cross $I_n$. 

For proving that, notice that the vectors tangent to $I_n$ are very close to $v_n$ 
which is uniformly (in $n$ large) transverse 
to the levels of $\nu$.
As $D\cP(y)$ tends to the identity map when $y$ tends to $0$, for $n$ large, 
the vectors tangent to $\cP(I_n)$ are also transverse to the levels of $\nu$. 
Hence 
$\cP(I_n)$ is a segment starting at the end point of $I_n$ 
(which realizes the infimum of $\nu$ on $I_n$) and $\nu$ is strictly decreasing along 
$\cP(I_n)$, concluding.
\end{proof}
This ends the proof of Lemma~\ref{l.straight} (and so of Theorem~\ref{teoprincipal}).
\end{proof}


\begin{thebibliography}{10}

\bibitem[Bo1]{Bonatti_esfera} C. Bonatti
\newblock Un point fixe commun pour des diff\'eomorphismes commutants de $\mathbb{S}^2$.
\newblock\emph{Ann. of Math.} (2) 129 (1989), no. 1, 61-69.

\bibitem[Bo3]{Bonatti_analiticos} C. Bonatti
\newblock Champs de vecteurs analytiques commutants, en dimension 3 ou 4: existence de z\'eros communs.
\newblock\emph{Bol. Soc. Brasil. Mat. (N.S.)}  22 (1992), no. 2, 215-247.


\bibitem[Bo2]{Bonatti_geral} C. Bonatti
\newblock Diff\'eomorphismes commutants des surfaces et stabilit\'e des fibrations en tores.
\newblock\emph{Topology}  29 (1990), no. 1, 101-126. 


\bibitem[B]{Boyce} W. Boyce 
\newblock Commuting functions with no common fixed point.  
\newblock\emph{Trans. Amer. Math. Soc.} 137 1969 77-92.  
PdM


\bibitem[DFF]{DFF} S. Druck, F. Fang and S. Firmo
\newblock Fixed points of discrete nilpotent group actions on $\mathbb{S}^2$.
\newblock\emph{ Ann. Inst. Fourier (Grenoble)} 52 (2002), no. 4, 1075-1091. 


\bibitem[Fi]{Firmo} S. Firmo
\newblock A note on commuting diffeomorphisms on surfaces.
\newblock\emph{Nonlinearity} 18 (2005), no. 4, 1511-1526. 

\bibitem[FHP]{FHP} J. Franks, M. Handel and K. Parwani
\newblock Fixed points of abelian actions on $\mathbb{S}^2$.
\newblock\emph{Ergodic Theory Dynam. Systems}  27 (2007), no. 5, 1557-1581. 


\bibitem[FHP2]{FHP2} J. Franks, M. Handel and K. Parwani
\newblock Fixed points of abelian actions.
\newblock\emph{J. Mod. Dyn.} 1 (2007), no. 3, 443-464.  



\bibitem[Ha]{Handel} M. Handel
\newblock Commuting homeomorphisms of $\mathbb{S}^2$.
\newblock\emph{Topology} 31 (1992), no. 2, 293-303.


\bibitem[Hi]{Hirsch} M. Hirsch
\newblock Common fixed points for two commuting surface homeomorphisms.
\newblock\emph{Houston J. Math.} 29 (2003), no. 4, 961-981


\bibitem[Li1]{Lima_esfera} E. Lima 
\newblock Commuting vector fields on $\mathbb{S}^2$.
\newblock\emph{Proc. Amer. Math. Soc.} 15 1964 138-141.


\bibitem[Li2]{Lima_geral} E. Lima 
\newblock Common singularities of commuting vector fields on 2-manifolds.
\newblock\emph{Comment. Math. Helv.} 39 1964 97-110. 

\bibitem[MT1]{MT1}  P.Molino and F.J.Turiel, Une observation sur les actions de $\R^p$ sur les vari\'et\'es compactes de caract\'eristique non nulle, 
Comment. Math. Helvetici, 61 (1986), p. 370?375.

\bibitem[MT2]{MT2} P.Molino and F.J.Turiel, Dimension des orbites d'une action de $\R^p$ sur une vari\'et\'e compacte, 
Comment. Math. Helvetici, 63 (1988), p. 253?258. 

\bibitem[MP]{palisdemelo} J. Palis and W. de Melo
\newblock Geometric theory of dynamical systems. Springer-Verlag, New York-Berlin, 1982.


\bibitem[Tu]{Turiel} F. Turiel 
\newblock An elementary proof of a Lima's theorem for surfaces. 
\newblock\emph{Publ. Mat.} 33 (1989), no. 3, 555-557. 


\end{thebibliography}
\end{document}